\newcommand{\Log}{\mathrm{Log}}
\renewcommand{\Sym}{\mathrm{Sym}}
\DeclareMathOperator{\Ham}{Ham}
\def\bP{\mathbb{P}}
\def\bQ{\mathbb{Q}}
\def\ZZ{\mathbb{Z}}
\newcommand{\CC}{\mathbb{C}}
\newcommand{\can}{\operatorname{can}}
\newcommand{\bfb}{\mathbf{b}}
\newcommand{\bfx}{\mathbf{x}}
\newcommand{\weak}{\operatorname{weak}}
\newcommand{\reg}{\operatorname{reg}}
\newcommand{\smooth}{\operatorname{smooth}}
\newcommand{\orb}{\mathrm{orb}}
\def\bR{\mathbb{R}}
\def\Hilb{\mathrm{Hilb}}
\def\eX{\EuScript{X}}
\def\eL{\EuScript{L}}
\def\eI{\EuScript{I}}
\newcommand{\ul}[1]{\underline{#1}}
\def\cM{\mathcal{M}}
\def\cL{\mathcal{L}}
\renewcommand{\cong}{\simeq}
\begin{document}

\thispagestyle{empty}
\title{Non-displaceable Lagrangian links in four-manifolds}
\author[Cheuk Yu Mak]{Cheuk Yu Mak}
\author[Ivan Smith]{Ivan Smith}

\address{Cheuk Yu Mak, Centre for Mathematical Sciences, University of Cambridge, Wilberforce Road, CB3 0WB, U.K.}
\address{Ivan Smith, Centre for Mathematical Sciences, University of Cambridge, Wilberforce Road, CB3 0WB, U.K.}
%\thanks{The authors are partially supported by a Fellowship from EPSRC.}
%\date{September 2019. Unauthorised readers will be pestered by pufflings.}

\begin{abstract}
Let $\omega$ denote an area form on $S^2$. 
Consider the closed symplectic 4-manifold $M=(S^2\times S^2, A\omega \oplus a \omega)$ with $0<a<A$.
We show that there are families of displaceable Lagrangian tori $\eL_{0,x},\, \eL_{1,x} \subset M$, for $x \in [0,1]$, such that 
the two-component link $\eL_{0,x} \cup \eL_{1,x}$ is non-displaceable for each $x$.
\end{abstract}
\maketitle
\section{Introduction}

\subsection{Context and results}

Let $(M,\omega_M)$ be a symplectic manifold. A Lagrangian submanifold $L \subset (M,\omega_M)$ is called (Hamiltonian) \emph{displaceable} if there exists a smooth time-dependent Hamiltonian function  $\{H_t\}=H \in C^{\infty}(M \times [0,1])$ for which the induced time-one Hamiltonian diffeomorphism $\phi^H$ satisfies $\phi^H(L) \cap L =\emptyset$.
If $L$ is not displaceable, then it is said to be (Hamiltonian) `non-displaceable'.
Understanding when a Lagrangian submanifold $L$ is displaceable  is a central question in symplectic topology; because of connections to dynamics and integrable systems, the case in which $L$ is a Lagrangian torus is especially classical.   A Lagrangian  $L$ is \emph{stably non-displaceable} if $L\times S^1 \subset M \times T^*S^1$ is non-displaceable, where $S^1\subset T^*S^1$ denotes the zero-section.

An equator $S^1_{eq} \subset S^2$ is manifestly non-displaceable, for area considerations.  In higher dimensions, a  sufficient way to prove that a Lagrangian $L$ is non-displaceable is to show that the Floer cohomology $HF(L,L) \neq 0$. 
%(When $L$ is a torus, the condition $HF(L,L)=H(L) \neq 0$ also means that $L$ should be `mirror' , in the sense of homological mirror symmetry, to the skyscraper sheaf of a point in a mirror of $M$.)  
Since Floer cohomology behaves well under taking products, it follows that $S^1_{eq}$ is stably non-displaceable, and that a product of equators in $(S^2\times S^2, A\omega \oplus A'\omega)$ is non-displaceable, for any areas $A,A' \in \R_{>0}$.

Consider now two disjoint circles $L_0, \, L_1 \subset S^2$ such that the complement of $L_0 \sqcup L_1$
comprises two discs each of area $B$ and a cylinder of area $C$. Area considerations again show that $L_0 \sqcup L_1$ is non-displaceable when $C < 2B$.  In this case, the individual $L_i \subset S^2$ \emph{are} displaceable (by rotation of the sphere), hence have vanishing Floer cohomology, and therefore $HF(L_0 \sqcup L_1, L_0 \sqcup L_1) = 0$ also vanishes.  Underscoring this, Polterovich made the remarkable observation \cite{Polterovich}  that $L_0 \sqcup L_1$ is in fact stably displaceable, i.e. the Lagrangian link $(L_0 \times S^1) \sqcup (L_1 \times S^1) \subset S^2\times T^*S^1$ is displaceable.  (The proof uses a Lagrangian suspension argument, and is  recalled in Lemma \ref{l:displace} below.) Since the Lagrangians are compact, when a displacing Hamiltonian isotopy exists, it can be chosen to be the identity outside a compact set in the cylinder factor $T^*S^1$. It follows \emph{a fortiori} that, if one fixes $A \gg 0$ sufficiently large, 
%and stabilises instead by taking product with the equator $S^1_{eq}$ in a 2-sphere of area $A$ (rather than with the zero-section of a cylinder),  
then $(L_0 \times S^1_{eq}) \sqcup (L_1 \times S^1_{eq}) \subset (S^2 \times S^2, (2B+C)\, \omega \oplus A\, \omega)$ is displaceable. 
In contrast,  whether $(L_0 \times S^1_{eq}) \sqcup (L_1 \times S^1_{eq})$ is non-displaceable or not when $A$ is small compared to $B$ and $C$ has been a long-standing open question. Surprisingly, it turns out that standard (orbifold) Floer-theoretical techniques can be applied to resolve the question, but the particular setting in which they should be applied is inspired by recent developments in tropical geometry and mirror symmetry.

%, we know that $L_0 \cup L_1$ is non-displaceable when $C <2B$. However, $HF(L_0 \cup L_1, L_0 \cup L_1)=0$ so the Floer theory cannot detect this nondisplaceability.
%In fact, by a Lagrangian suspension argument, one can show that

%Since Floer cohomology satisfies the Kunneth formula, if $HF(L_0 \cup L_1, L_0 \cup L_1)$ were non-zero, then 
%$HF(\widetilde{L}_0 \cup \widetilde{L}_1, \widetilde{L}_0 \cup \widetilde{L}_1)$ would also be non-zero which violates Lemma \ref{l:displace}.
%It gives another reason for the failure of Floer theory in detecting the non-displaceability of $L_0 \cup L_1$.

Our main theorem is as follows.

\begin{thm}\label{t:nondisLag}
Let $M=(S^2 \times S^2, (2B+C)\omega \oplus (2a)\omega)$. 
Let $\eL_i=L_i \times S^1_{eq}$ in $M$, where $S^1_{eq} \subset S^2$ is the equator in the second factor. Let $\eL:=\eL_0 \sqcup \eL_1$.
If $0<a<B-C$, then for both $i=0,1$, we have $\phi(\eL_i) \cap \eL \neq  \emptyset$  for any Hamiltonian diffeomorphism $\phi$ of $M$. In particular, $\eL$ is non-displaceable. 
\end{thm}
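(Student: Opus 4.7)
The strategy is to equip $\mathcal{L}$ with a bounding cochain $b$ so that $HF^*((\mathcal{L},b);(\mathcal{L},b))\neq 0$, from which non-displaceability and the two intersection statements follow via the Fukaya-Oh-Ohta-Ono criterion. The orbifold hint in the introduction suggests computing this Floer cohomology through an orbifold reduction rather than directly on $M$.

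First, I would look for a symplectic involution $\sigma$ of the first factor $(S^2,(2B+C)\omega)$ that swaps $L_0$ and $L_1$; a $180^\circ$ rotation with two fixed points on the middle circle of the cylindrical region of area $C$ does the job. Extending by the identity on the second factor and quotienting produces a symplectic orbifold $\bar M=\bar S\times S^2$, where $\bar S$ is an orbifold sphere with two $\mathbb{Z}/2$-points, and the link $\mathcal{L}$ descends to a single embedded Lagrangian torus $\bar{\mathcal{L}}=\bar L\times S^1_{eq}$. The orbifold-to-cover dictionary (Fukaya-Oh-Ohta-Ono-type Floer theory for orbifolds, or an equivariant Floer calculation upstairs) then transfers non-displaceability of $\bar{\mathcal{L}}$ in $\bar M$ to non-displaceability of the link $\mathcal{L}$ in $M$.

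Second, I enumerate the Maslov-$2$ holomorphic discs bounded by $\bar{\mathcal{L}}$, counting both smooth discs and orbi-discs passing through the $\mathbb{Z}/2$-points. In coordinates $(z_1,z_2)\in(\Lambda^*)^2$ on the Maurer-Cartan torus these assemble into a disc potential of the shape
\[
W(z_1,z_2)\;=\;T^a(z_2+z_2^{-1})\;+\;T^{B}\,z_1\;+\;(\text{orbi-disc contributions in }z_1^{-1})
\]
over the Novikov field $\Lambda$. The critical-point equation $dW=0$ is a tropical balancing problem; I expect a solution in $(\Lambda^*)^2$ (with a suitable $\mathbb{C}^*$-local system) to exist precisely when $0<a<B-C$: the lower bound ensures the second-factor contributions do not dominate, and the upper bound is exactly what lets the smooth $T^Bz_1$ monomial balance against the two orbifold monomials. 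At such a critical point the induced bounding cochain $b$ makes $(\bar{\mathcal{L}},b)$ weakly unobstructed with $HF^*((\bar{\mathcal{L}},b);(\bar{\mathcal{L}},b))\neq 0$, which by the standard criterion gives non-displaceability in $\bar M$ and then in $M$.

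The main obstacle is the disc enumeration in Step two: identifying all Maslov-$2$ smooth and orbifold discs bounded by $\bar{\mathcal{L}}$, verifying regularity or setting up suitable virtual perturbations in the orbifold setting, and ruling out bubbling of higher-area orbi-discs that would alter $W$ or obstruct the Maurer-Cartan equation. The numerical range $0<a<B-C$ should emerge combinatorially from the Newton polytope of $W$, pinpointing the chamber in the tropical base of an almost-toric model of $\bar M$ in which a Floer-theoretic critical point exists; this is the tropical/mirror-symmetric insight hinted at in the introduction. The final step, lifting non-displaceability from $\bar M$ to $M$, is essentially formal once Step two is in place.
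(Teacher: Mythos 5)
There is a genuine gap, and it sits exactly in the step you describe as ``essentially formal'': the transfer of non-displaceability from your quotient orbifold $\bar M = (S^2/\sigma)\times S^2$ back to $M$. The implication between the quotient and the cover runs the wrong way for your purposes. A Hamiltonian on $\bar M$ lifts to a $\sigma$-equivariant Hamiltonian on $M$, so displaceability of $\bar{\mathcal{L}}$ in $\bar M$ implies displaceability of $\mathcal{L}$ in $M$; equivalently, non-displaceability of $\bar{\mathcal{L}}$ in $\bar M$ only rules out displacing $\mathcal{L}$ by \emph{$\sigma$-equivariant} Hamiltonian isotopies of $M$. A general Hamiltonian diffeomorphism of $M$ does not commute with $\sigma$ and does not descend to $\bar M$, so no conclusion about arbitrary displacements of the link follows. (For comparison: Polterovich's stable displacement of $L_0\sqcup L_1$ in $S^2\times T^*S^1$, recalled in Lemma \ref{l:displace}, is also non-equivariant for the analogous involution, which illustrates that equivariant non-displaceability is genuinely weaker than non-displaceability.)

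The paper resolves precisely this difficulty by choosing a different orbifold: $\eX=\Sym^2(M)=(M\times M)/(\ZZ/2)$, where the $\ZZ/2$ permutes the two factors of $M\times M$ rather than acting on $M$ itself. The crucial point is the map $\Ham(M)\to\Ham(\eX)$ of \eqref{eq:HamMap}: \emph{every} Hamiltonian $H$ on $M$ gives the manifestly symmetric function $H(z_1,t)+H(z_2,t)$, so any $\phi\in\Ham(M)$ acts diagonally on $\eX$ and non-displaceability of the product torus $\Sym(\eL)=[\eL_0\times\eL_1]\subset\eX$ immediately yields $\phi(\eL_i)\cap\eL\neq\emptyset$ for arbitrary $\phi$ (Lemma \ref{l:symlift}). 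The orbifold locus of $\eX$ is the diagonal, and the orbi-discs through it correspond tautologically to Maslov-index-$2$ holomorphic \emph{annuli} in $M$ with one boundary on each of $\eL_0,\eL_1$; these, weighted by a twisted-sector bulk parameter $\bfb_{\orb}$ with $\bfb_{\orb}^2/2=T^{B-a-C}$, supply the extra monomials that produce critical points of the (four-variable) potential, and the condition $a<B-C$ is exactly what makes $\bfb_{\orb}\in\Lambda_+$. Your disc enumeration on $\bar M$ might well be carried out, but even if it succeeds it proves only an equivariant statement; to repair the argument you would have to replace your quotient by one on which all of $\Ham(M)$ acts, which is the symmetric product.
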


As a consequence,

\begin{cor}
 Let $M=(S^2\times S^2, A\omega \oplus a \omega)$. If $a<A$, there are families of displaceable Lagrangian tori $\eL_{0,x},\, \eL_{1,x} \subset M$, for $x \in [0,1]$, such that 
the two-component link $\eL_{0,x} \sqcup \eL_{1,x}$ is non-displaceable for each $x$.
\end{cor}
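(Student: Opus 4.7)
The plan is to deduce this directly from Theorem~\ref{t:nondisLag} by matching parameters. First I would rewrite the given $M = (S^2 \times S^2, A\omega \oplus a\omega)$ in the form required by the theorem, $(S^2 \times S^2, (2B+C)\omega \oplus 2a'\omega)$ with $0 < a' < B - C$ and $B, C > 0$. Setting $a' = a/2$ and $C = A - 2B$, the constraints $C > 0$ and $B - C > a/2$ collapse to $B \in \bigl((2A+a)/6,\, A/2\bigr)$, an open interval which is non-empty precisely because $a < A$.

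Next I would pick any continuous map $x \mapsto B(x)$ from $[0,1]$ into this interval, set $C(x) := A - 2B(x)$, and choose disjoint circles $L_0^{(x)}, L_1^{(x)} \subset S^2$ varying continuously in $x$ whose complement in $S^2$ consists of two discs of area $B(x)$ and a cylinder of area $C(x)$ — concretely, one can realise them as a family of regular level sets of a Morse function on $S^2$ after a suitable area-preserving rescaling. Define $\eL_{i,x} := L_i^{(x)} \times S^1_{eq}$ in $M$. Then Theorem~\ref{t:nondisLag} applied with the parameter triple $(B(x), C(x), a/2)$ yields non-displaceability of $\eL_{0,x} \sqcup \eL_{1,x}$ in $M$ for every $x \in [0,1]$.

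It remains to show that each individual $\eL_{i,x}$ is displaceable. Since $C(x) > 0$, the two discs in $S^2$ bounded by $L_i^{(x)}$ have distinct areas $B(x)$ and $B(x) + C(x)$, so $L_i^{(x)}$ is Hamiltonian-displaceable inside the first $S^2$ (any non-equatorial circle on a sphere is displaced by a suitable rotation-like isotopy); extending this isotopy by the identity on the second $S^2$ factor produces a Hamiltonian diffeomorphism of $M$ displacing $\eL_{i,x}$. The only substantive check is the non-emptiness of the admissible interval for $B$, which is exactly the hypothesis $a < A$, so I do not anticipate any serious obstacle — given Theorem~\ref{t:nondisLag}, the corollary is purely a matter of parameter-matching and elementary displacement arguments.
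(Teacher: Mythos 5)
Your proposal is correct and follows essentially the same route as the paper: apply Theorem \ref{t:nondisLag} with a one-parameter family of admissible $(B,C)$ and note that the admissible range is non-empty exactly when $a<A$ (the paper rescales the form to $2A\omega\oplus 2a\omega$ and sets $2B+C=2A$, whereas you match $2B+C=A$ with $a'=a/2$ directly, which is the same computation up to an overall rescaling of $\omega_M$ that does not affect displaceability). Your explicit check that each individual $\eL_{i,x}$ is displaceable by a rotation of the first sphere factor is a welcome addition that the paper relegates to the introduction.
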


\begin{proof}
 For any $B$ close to $A$ and $C>0$ close to $0$ such that $2B+C=2A$ and $B-C>a$, we get a non-displaceable Lagrangian link $\eL_{0} \sqcup \eL_{1} \subset (M, 2A\omega \oplus 2a\omega)$ by Theorem \ref{t:nondisLag}.
 We can vary $B$ to get a family of non-displaceable Lagrangian links. 
\end{proof}

As far as we know, this is the first example in higher dimensions (where area considerations do not pertain) of a non-displaceable Lagrangian link whose constituent components are displaceable.

\begin{figure}[ht]
\begin{center} 
\begin{tikzpicture}[scale=0.6]

\draw[semithick] ellipse  (3.05 and 3.05);

\draw[semithick] (0.7,2.95) arc (10:-10:17);
\draw[semithick] (-0.68,2.96) arc (10:-10:17);
\draw[semithick, dashed] (-0.68,2.96) arc (170:190:17);
\draw[semithick, dashed] (0.7,2.95) arc (170:190:17);
\draw[semithick] (5,-3) -- (5,3);
\draw[semithick] (8,-3) -- (8,3); 
\draw[semithick,dashed] (5,0) arc (120:60:3);
\draw[semithick] (5,0) arc (240:300:3);

\draw (0,0) node {$C$};
\draw  (2,0) node {$B$};
\draw (-2,0) node {$B$};
\draw (4,0) node {$\times$};
\draw (10,0) node {$\mathrm{vs}$};
%\draw [decorate] ([yshift=-5mm]g1.west) --node[below=3mm]{$S$} ([yshift=-5mm]g3.east);

\draw[semithick]  (15,0) ellipse  (3.05 and 3.05);
\draw[semithick] (15.7,2.95) arc (10:-10:17);
\draw[semithick] (15-0.68,2.96) arc (10:-10:17);
\draw[semithick, dashed] (15-0.68,2.96) arc (170:190:17);
\draw[semithick, dashed] (15.7,2.95) arc (170:190:17);
\draw (15,0) node {$C$};
\draw  (15+2,0) node {$B$};
\draw (15-2,0) node {$B$};
\draw (15+4,0) node {$\times$};

\draw[semithick] (22,0) ellipse (2.05 and 2.05);
\draw[semithick,dashed] (20,0) arc (120:60:4);
\draw[semithick] (20,0) arc (240:300:4);
\draw (22,1) node {$a$};
\draw (22,-1) node {$a$};

\end{tikzpicture}
\end{center}
\label{(Non)Displaceable}
\caption{Displaceable (left) but non-displaceable (right) if $0<a<B-C$}
\end{figure}
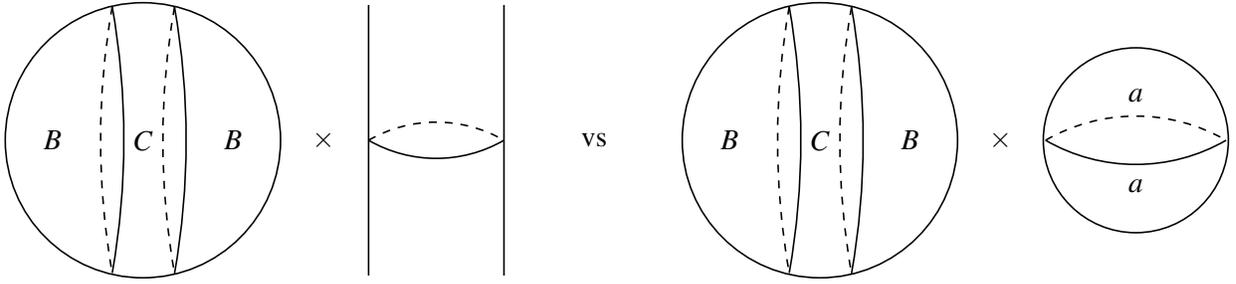

\begin{rmk} Floer theory for non-monotone symplectic manifolds such as $M$, and its Hamiltonian invariance, has been constructed in two different settings.  For rational symplectic forms, i.e. when $[\omega] \in H^2_{dR}(M)$ lies in the image of $H^2(M;\bQ)$, there is a detailed construction of the Fukaya algebra $HF^*(L,L)$ of a Lagrangian, with its $A_{\infty}$-structure and Maurer-Cartan theory, due to Charest and Woodward \cite{Charest-Woodward} based on the technique of `stabilizing divisors' due to Cieliebak and Mohnke \cite{Cieliebak-Mohnke}.
(An important structural feature of the Fukaya algebra, on which our argument relies,  is the `boundary divisor axiom'. Charest-Woodward prove a weak boundary divisor axiom but not the full one,  see Remark \ref{r:CW}.) 
For general symplectic forms, and for Floer cohomology of a pair of distinct Lagrangians, the construction relies on virtual perturbation technology.  Our proof uses rather formal properties of bulk deformation theory and bulk-deformed $A_{\infty}$-algebras for Lagrangian tori in symplectic orbifolds.   A complete development in the language of Kuranishi spaces has been given by Fukaya, Oh, Ohta and Ono \cite{FO3-1, FO3-2, FO3-construction, VFC-book};  their work was extended to the setting of orbifolds by Cho and Poddar \cite{Cho-Poddar}.  A reader uncomfortable with Kuranishi spaces should take Theorem \ref{t:superpotential} and Theorem \ref{t:superpotential2} as axioms.
\end{rmk}

\subsection{Idea of proof}\label{ss:sketch}

We will write $S^2_\alpha$ for the sphere of  area $\alpha$. 
Let $\eX:=\Sym^2(M)$ be the 2-fold symmetric product of $M$, i.e. the quotient of $M \times M$ by the $\ZZ/2$ action which exchanges the factors. 
By definition, $\eX$ is equipped with the structure of a symplectic orbifold, where the set of orbifold points is precisely the image of the diagonal.
The product Lagrangian $\eL_0 \times \eL_1$ lies away from the diagonal, so it descends to a smooth Lagrangian submanifold in $\eX$, which we denote by $\Sym(\eL)$.
Hamiltonian functions and Hamiltonian diffeomorphisms make sense in a symplectic orbifold.
Moreover, given a Hamiltonian function $H \in C^{\infty}(M \times [0,1])$
and $(z_1,z_2) \in M \times M$, the function $H(z_1,t)+H(z_2,t)$ is $\ZZ/2$-invariant and hence induces
a function $\Sym(H) \in C^{\infty}(\eX \times [0,1])$ defined by
\begin{align}
 \Sym(H)([z_1,z_2],t):=H(z_1,t)+H(z_2,t).
\end{align}
The induced Hamiltonian diffeomorphism defines a map
\begin{align}
 \Ham(M) &\to \Ham(\eX) \label{eq:HamMap} \\
 \phi &\mapsto ([z_1,z_2] \mapsto [\phi(z_1),\phi(z_2)]). \nonumber
\end{align}

\begin{rmk}
 Any Hamiltonian function on a symplectic orbifold admits a smooth lift to local uniformization charts, so any smooth lift near an orbifold point is invariant under the corresponding isotropy group. It follows that every Hamiltonian diffeomorphism of a  
 symplectic orbifold preserves the orbifold strata (this is obvious for elements in the image of \eqref{eq:HamMap}).
\end{rmk}

An immediate consequence of the  existence of the map \eqref{eq:HamMap} is that:
\begin{lem}\label{l:symlift}
 If $\Sym(\eL)$ is non-displaceable, then for both $i=1,2$ and for any Hamiltonian diffeomorphism $\phi$ of $M$, we have $\phi(\eL_i) \cap \eL \neq \emptyset$.
\end{lem}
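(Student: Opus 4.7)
The plan is to prove the contrapositive using the symmetrization homomorphism \eqref{eq:HamMap}. Assume there exists $\phi \in \Ham(M)$ with $\phi(\eL_i) \cap \eL = \emptyset$ for some $i$; by relabelling I may take $i = 0$. I then want to show that the induced diffeomorphism $\Sym(\phi) \in \Ham(\eX)$ displaces $\Sym(\eL)$, which will contradict the hypothesis that $\Sym(\eL)$ is non-displaceable.

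The core computation is formal. Every point of $\Sym(\eL)$ admits a representative $(z_0, z_1) \in \eL_0 \times \eL_1$, and by construction $\Sym(\phi)[z_0, z_1] = [\phi(z_0), \phi(z_1)]$. If this point lay in $\Sym(\eL)$, then the unordered pair $\{\phi(z_0), \phi(z_1)\}$ would contain exactly one element of $\eL_0$ and one element of $\eL_1$, so in particular $\phi(z_0) \in \eL_0 \cup \eL_1 = \eL$. Since $z_0 \in \eL_0$, this directly contradicts the assumption $\phi(\eL_0) \cap \eL = \emptyset$.

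The only minor caveats, neither of which is a real obstacle, are (i) verifying that $\Sym(\phi)$ is genuinely a Hamiltonian diffeomorphism of the orbifold $\eX$, which follows from its construction as the time-one map of the $\ZZ/2$-invariant smooth Hamiltonian $\Sym(H)$ descended to $\eX$, and (ii) noting that $\Sym(\eL)$ lies in the smooth locus of $\eX$ because $\eL_0$ and $\eL_1$ are disjoint, so the preimage of $\Sym(\eL)$ under the projection $M \times M \to \eX$ is precisely $(\eL_0 \times \eL_1) \cup (\eL_1 \times \eL_0)$ and the representative-based argument above is unambiguous. Once these points are recorded, the argument is essentially a one-line check.
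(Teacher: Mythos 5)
Your proof is correct and is exactly the argument the paper has in mind: the lemma is stated there as an immediate consequence of the symmetrization homomorphism \eqref{eq:HamMap}, and your contrapositive computation (if $[\phi(z_0),\phi(z_1)]\in\Sym(\eL)$ then $\phi(z_0)\in\eL_0\cup\eL_1$, contradicting $\phi(\eL_0)\cap\eL=\emptyset$) is the intended one-line check. The two caveats you record are also handled the same way in the paper's setup.
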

Thus, Theorem \ref{t:nondisLag} will be a consequence of Lemma \ref{l:symlift}, via:
\begin{thm}\label{t:nondisLag2}
 Under the assumptions of Theorem \ref{t:nondisLag}, $\Sym(\eL)$ is non-displaceable.
\end{thm}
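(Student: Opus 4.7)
The plan is to prove non-displaceability of $\Sym(\eL)$ in the symplectic orbifold $\eX = \Sym^2(M)$ by establishing that its bulk-deformed Lagrangian Floer cohomology $HF^\bfb(\Sym(\eL), \Sym(\eL); \xi)$ is nonzero for some choice of bulk cycle $\bfb$ on $\eX$ and some rank-one unitary local system $\xi$ on $\Sym(\eL)$. Since this invariant is preserved under Hamiltonian isotopy in the Cho--Poddar extension of the FOOO framework for symplectic orbifolds (whose formal properties we treat as axioms per the earlier remark), non-vanishing forces $\phi(\Sym(\eL)) \cap \Sym(\eL) \neq \emptyset$ for every $\phi \in \Ham(\eX)$, which is the required conclusion.

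The computation rests on the bulk-deformed superpotential $W^\bfb$ of $\Sym(\eL)$, whose structure is supplied by Theorems \ref{t:superpotential} and \ref{t:superpotential2}. I would first describe $\Sym(\eL) \subset \eX$ concretely: since $\eL_0 \cap \eL_1 = \emptyset$ in $M$, the image $\Sym(\eL)$ is a smooth Lagrangian $4$-torus disjoint from the orbifold diagonal, and $H_1(\Sym(\eL))$ has a natural basis coming from the four circle factors of $L_0 \times L_1 \times S^1_{eq} \times S^1_{eq}$. The superpotential $W^\bfb$ is then a Laurent polynomial in dual Novikov coordinates $(x_1, x_2, y_1, y_2)$, indexed by $\Sym(\eL)$-bounding Maslov-$2$ (orbi-)disks in $\eX$. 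Geometrically these come in two flavours: smooth \emph{product} disks that descend from a pair consisting of a Maslov-$2$ disk in $M$ bounded by a single $\eL_i$ and a point, and \emph{orbi-disks} which meet the diagonal stratum of $\Sym^2(M)$ at an orbifold marked point. The bulk cycle $\bfb$ provides a tuning parameter for the twisted-sector contributions of the latter.

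The heart of the argument will then be to find a critical point of $W^\bfb$ over the Novikov field. Using the $\ZZ/2$-symmetry exchanging $\eL_0 \leftrightarrow \eL_1$, I would impose a symmetric ansatz which collapses the system $\partial W^\bfb / \partial x_i = \partial W^\bfb / \partial y_j = 0$ to a small balance equation on the valuations of the competing monomials. This is precisely where the inequality $0 < a < B - C$ is expected to enter: it should be the numerical condition on the areas guaranteeing that the Newton polytope of $W^\bfb$ admits a tropical/leading-order critical point, equivalently that the top-valuation terms of the partial derivatives cancel. Given such a critical point, the standard FOOO criterion (critical points of $W^\bfb$ correspond to rank-one local systems $\xi$ for which $m_1^{\bfb,\xi}$-cohomology is nonzero) delivers the required non-vanishing of $HF^\bfb(\Sym(\eL), \Sym(\eL); \xi)$.

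The main obstacle I anticipate is the disk enumeration step: correctly identifying all Maslov-$2$ orbi-disks with boundary on $\Sym(\eL)$, their areas in terms of $B$, $C$, $a$, and the contributions of multiple covers and of the twisted sector, and then selecting $\bfb$ so that the resulting $W^\bfb$ admits a critical point precisely in the regime $0 < a < B - C$. This is the content for which Theorems \ref{t:superpotential} and \ref{t:superpotential2} are designed; once $W^\bfb$ is in hand, the remaining Newton-polygon analysis over the Novikov field and the passage from a critical point of $W^\bfb$ to non-displaceability are essentially formal.
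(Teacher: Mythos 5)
Your overall strategy coincides with the paper's: pass to $\eX=\Sym^2(M)$, compute a bulk-deformed superpotential $W^{\bfb}$ for the $4$-torus $\Sym(\eL)$ with $\bfb$ containing a twisted-sector term, locate a critical point over the Novikov ring, and invoke the orbifold version of the FOOO criterion (Theorem \ref{t:superpotential2}). But the step you defer as the ``main obstacle'' is precisely where all the content lives, and Theorems \ref{t:superpotential} and \ref{t:superpotential2} are not ``designed'' to supply it: they are purely formal, and the geometric enumeration must be done by hand. The smooth product discs you describe give only
$W_{\smooth}=T^a(x_2^{-1}+y_2^{-1}+x_2+y_2)+T^B(x_1+y_1)+T^{B+C}(x_1^{-1}+y_1^{-1})$,
and $\partial_{x_1}W_{\smooth}=T^B-T^{B+C}x_1^{-2}$ forces $\val(x_1)=C/2>0$, so there is no critical point with coordinates in $\Lambda_0\setminus\Lambda_+$. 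Hence a nonzero twisted-sector contribution must actually be computed. The paper's key input is that, via the tautological correspondence, the orbi-discs through the diagonal are connected double covers of the disc, i.e.\ honest holomorphic \emph{annuli} in $M$ with one boundary component on each link component --- the only curves that see the link structure. One then needs: existence and the algebraic count $\pm1$ of these annuli in the lowest class $A_{1,0}$ (proved by slit-domain/Green's function arguments); non-existence in the classes $A_{1,q}$ for $q>0$, so that this really is the leading correction; and Fukaya's trick, since $\eL_0$ and $\eL_1$ share their $S^1_{eq}$ factor and therefore violate the general-position hypothesis (Assumption \ref{a:irrational}) needed to rule out Maslov $0$ configurations and verify $H^1(L,\Lambda_0)/H^1(L,2\pi\sqrt{-1}\ZZ)\subset\hat{\cM}_{\weak}(L,m^{\bfb})$. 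None of this is present in the proposal.

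Two further points. The inequality $0<a<B-C$ is not a Newton-polytope condition on a superpotential already in hand; it is exactly the condition allowing one to choose $\bfb_{\orb}$ with $\bfb_{\orb}^2/2=T^{B-a-C}\in\Lambda_+$, which promotes the annulus term $\pm\tfrac{\bfb_{\orb}^2}{2}T^{a+C}(x_1y_1)^{-1}(x_2+y_2)$ to order $T^B$ so that it can balance $T^B(x_1+y_1)$ in the leading order equations; the paper also adds a smooth bulk term $\bfb_1[D_{S_1}]$, adjusted inductively over the value monoid generated by $B-a-C$ and $C$, to solve the equations to all orders. Finally, the critical point lives in $(\Lambda_0\setminus\Lambda_+)^4$ and corresponds to a weak bounding cochain $b\in H^1(L,\Lambda_0)$ rather than a \emph{unitary} local system, so your framing in terms of rank-one unitary local systems is too restrictive for the solution that actually exists.
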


The proof of Theorem \ref{t:nondisLag2} uses a bulk deformed superpotential argument (cf. \cite{FOOObulk, Cho-Poddar}), and should be compared with the main result of \cite{FO3-S2xS2} which proved the existence of continuum families of non-displaceable Lagrangian tori $T^2 \subset (S^2\times S^2, \omega \oplus \omega)$. The calculation of the superpotential is motivated by the `tropical-holomorphic' correspondence, which relates holomorphic curves in the total space of a Lagrangian torus fibration with tropical curves in the base; the actual computation appeals to the `tautological' correspondence, which relates holomorphic discs in $\Sym(M)$ with holomorphic branched covers of discs mapping to $M$ itself.

In a little more detail, the main ideas can be summarised as follows. 
Each $\eL_i$ bounds $4$ families of Maslov $2$ discs, say in classes $\beta^j_i$, for $j=1,2,3,4$.
By (a rather trivial instance of) the tautological correspondence, the disjoint union of a disc in class 
$\beta^j_i$ and a constant map from a disc to $\eL_{1-i}$ lifts to a Maslov $2$ holomorphic disc in $\eX$ with boundary on $\Sym(\eL)$. 
Keeping track of their areas, these $8$ families of discs contribute the following terms to the superpotential of $\Sym(\eL)$:
\begin{align}
 T^a(x_2^{-1}+y_2^{-1}+x_2+y_2)+T^B(x_1+y_1)+T^{B+C}(x_1^{-1}+y_1^{-1}). \label{eq:SmoothPart}
\end{align}
The function \eqref{eq:SmoothPart} has no critical point in the units of the Novikov ring, but the Laurent polynomial 
$x_2^{-1}+y_2^{-1}+x_2+y_2$ does.

We introduce a bulk deformation such that certain holomorphic annuli in $M$ with boundary on $\eL$ contribute to the bulk deformed superpotential of $\Sym(\eL)$.
The tropical picture of one of these annuli is depicted in Figure \ref{fig:Annuli}, where 
$p_0$ and $p_1$ are
the projection of $\eL_0$ and $\eL_1$ under the $\Log$ map; they have been slightly perturbed to ease visualisation.

\begin{figure}[ht]
\begin{center}

 \includegraphics{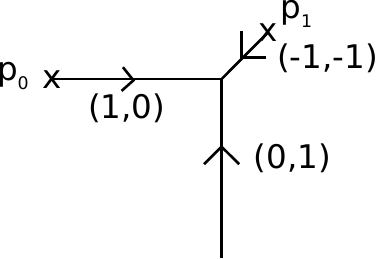}
 \caption{The tropical picture of an annulus contributing to the bulk deformed superpotential}\label{fig:Annuli}
\end{center}
\end{figure}

\begin{rmk}
 The product annulus given by the annulus bound by $L_0 \sqcup L_1$ in the $S^2_{2B+C}$ factor
 and a constant in the $S^2_{2a}$ factor is not directly helpful to prove non-displaceability of $\eL$ (for instance any argument only using such annuli would translate to $S^2_{2B+C} \times T^*S^1$).
 The annuli we use are different, and project onto a disc in the $S^2_{2a}$ factor. In particular, the annuli which contribute to the superpotential have Maslov index $2$, while the `small' visible annuli have Maslov index $0$.
\end{rmk}

\begin{rmk} There is not yet a general `tropical-holomorphic' correspondence for holomorphic curves with Lagrangian boundary conditions. Note that without some hypotheses, analytic curves in $(\CC^*)^2$ do not Gromov-Hausdorff converge to tropical curves under the rescaled logarithm maps, cf. \cite{Madani-Nisse}; for instance the curve $(z,e^z)\subset (\CC^*)^2$ has logarithmic limit set containing an interval (while the logarithmic limit set of an algebraic curve is a finite set).   In the sequel, we will give explicit constructions of the holomorphic annuli we require in Section \ref{ss:classification}, but we believe it is helpful to explain how these constructions were motivated by tropical analogues; those are given first, in Section \ref{Sec:tropical}.
\end{rmk}

After appropriate deformation, we can take the two lowest order terms of the superpotential to be 
\begin{align}
 T^a(x_2^{-1}+y_2^{-1}+x_2+y_2)+T^B(x_1+y_1+(x_1y_1)^{-1}(x_2+y_2)). \label{eq:OrPart}
\end{align}
This is possible only when $B-a-C>0$, which leads to the corresponding assumption in Theorem \ref{t:nondisLag}.
The leading order term equations for the critical points of \eqref{eq:OrPart} in the sense of \cite{FOOObulk} are 
\begin{align}\label{eq:leading}
\left\{
\begin{array}{ll}
 -x_2^{-2}+1&=0 \\
 -y_2^{-2}+1&=0 \\
 1-x_1^{-2}y_1^{-1}(x_2+y_2)&=0 \\
 1-x_1^{-1}y_1^{-2}(x_2+y_2)&=0
\end{array}
\right.
\end{align}
which admit $6$ solutions of the form $x_2=y_2 \neq 0$ and $x_1=y_1 \neq 0$.
We will indeed prove that there are at least $6$ critical points for the appropriate bulk deformed superpotential.
Combining this existence result for critical points with the general machinery developed in \cite{FO3-1,FO3-2, Cho-Poddar}, we obtain Theorem \ref{t:nondisLag2}.

\begin{remark}\label{r:Hilb}
 The Lagrangian $\Sym(\eL)$ lies in the smooth locus of $\eX$. It can therefore be lifted
 to a Lagrangian in $\Hilb^2(M)$, the Hilbert scheme of zero dimensional subschemes of length $2$ in $M$.
 Our argument can be applied to show that $\Sym(\eL)$ is non-displaceable in $\Hilb^2(M)$ when $a>B-C$ and the size of the blow-up is $a-(B-C)$ (see Remark \ref{r:Hilb2}).  
 \end{remark}

\begin{remark}\label{r:3}
 It is natural to ask for examples of Lagrangian links with more than two components. For a trivial source of examples, one can take $L_0,\ldots,L_d$ to be parallel circles in $S^2_{2B+(d-1)C}$ such that, for $i=1,\dots,d$, $L_{i-1}$ and
 $L_i$ are the respective boundary components of an area $C$ cylinder which does not meet any other $L_j$.
 The corresponding $\sqcup_{i=0}^d \eL_i$ is non-displaceable in $S^2_{2B+(d-1)C} \times S^2_a$ for appropriate $a>0$.  However, there are proper subsets of the link $\{\eL_i\}$ which are already non-displaceable.
 
 It would be more interesting to find a `Borromean' example, i.e. a non-displaceable $(d+1)$-component  Lagrangian link such that any $d$-component sublink \emph{is} displaceable.  We hope to give such examples, based on a more systematic formulation of the tropical-holomorphic correspondence, in a sequel.
 % A possible candidate is as follows.
% Take $L_0,\dots,L_d$ to be circles in $S^2_{(d+1)B+C}$ such that each $L_i$ bounds a disc $D_i$ of area $B$ and  such that $D_i \cap L_j =\emptyset$ for all $j \neq i$. 
% Let $\eL_i=L_i \times S^1_{eq}$.
% For appropriate $B,C>0$, we expect the Lagrangian link $\eL:=\sqcup_{i=0}^d \eL_i$ should be Borromean in the above sense.   
%  The complication over the two-component examples studied here lies in proving that $\Sym(\eL)$ does not bound holomorphic discs with small virtual dimension. 
\end{remark}

\subsection{Stable displaceability}

For completeness, we recall the statement and proof of Polterovich's stable displaceability result, mentioned previously. The argument presented here is a modification of Example $6.3.C$ in \cite{Polterovich}.

\begin{lem}\label{l:displace}

Let $L_0,L_1 \subset S^2_{2B+C}$  be as above.
%Let $\widetilde{M}=M \times T^*S^1$
%and $\widetilde{L}_i=L_i \times S^1$.
%Then $\widetilde{L}_0 \sqcup \widetilde{L}_1$ is displaceable in $\widetilde{M}$.
Then $(L_0 \sqcup L_1) \times S^1$ is displaceable in $S^2_{2B+C} \times T^*S^1$, where
$S^1 \subset T^*S^1$ is the zero section.
\end{lem}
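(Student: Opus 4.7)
The plan is to implement a Lagrangian suspension argument, as announced in the excerpt. The essential input is that each individual $L_i \subset S^2_{2B+C}$ is Hamiltonian displaceable in $S^2_{2B+C}$: since $L_i$ bounds discs of unequal area $B$ and $B+C$, one can rotate the sphere to send $L_i$ off itself. In the stabilized target $S^2_{2B+C} \times T^*S^1$, one has an extra $q$-parameter (the $S^1$-coordinate of $T^*S^1$) in which to interpolate between two such individual displacements.

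Explicitly, I would choose a smooth function $\alpha : S^1 \to [0,1]$ and a time-dependent Hamiltonian on $S^2 \times T^*S^1$ of the form
\[
 K_t(z, q, p) = \alpha(q)\, H_t^0(z) + (1 - \alpha(q))\, H_t^1(z),
\]
where $H_t^i \in C^\infty(S^2)$ generates a Hamiltonian isotopy $\phi^t_i$ of $S^2$ with $\phi^1_0$ displacing $L_0$ and $\phi^1_1$ displacing $L_1$. Since $K_t$ is independent of $p$, its time-$1$ flow $\Phi$ preserves $q$, evolves $z$ under the pointwise Hamiltonian $\alpha(q) H_t^0 + (1-\alpha(q)) H_t^1$ at fixed $q$, and shifts $p$ by the integral of $-\partial_q K_t = -\alpha'(q)(H_t^0(z) - H_t^1(z))$ along the trajectory. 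The image $\Phi((L_0 \sqcup L_1) \times S^1)$ will be disjoint from the original provided that, for every $q \in S^1$, either the $z$-projection misses $L_0 \sqcup L_1$ or the $p$-shift is nonzero, thereby separating the image from the zero section.

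The main obstacle is arranging these two alternatives to cover all of $S^1$ simultaneously. At points where $\alpha(q) \in \{0,1\}$ and $\alpha'(q)=0$ (the extrema of $\alpha$), the $p$-shift from the $\alpha$-term vanishes, so the $z$-action must genuinely displace the link there; whereas at intermediate $q$, one relies on $\alpha'(q) \neq 0$ together with $H^0 - H^1$ not vanishing on $L_0 \sqcup L_1$. The delicate case is thus at the extrema of $\alpha$, handled by choosing the generating Hamiltonians $H_t^i$ with appropriate sign and support control on a neighborhood of $L_0 \sqcup L_1$ (e.g.\ so that $H_t^i$ is strictly positive on $L_i$ throughout the isotopy and the isotopies $\phi^t_i$ are as concentrated as possible near the respective $L_i$), and by possibly adding a $q$-dependent correction $\gamma(q)$ to $K_t$ whose critical points are disjoint from those of $\alpha$, giving a nonvanishing $z$-independent contribution to the $p$-shift at the extrema of $\alpha$. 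The resulting verification of disjointness is then a case analysis in $q$, following the strategy of \cite[Example~6.3.C]{Polterovich}.
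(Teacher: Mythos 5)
Your construction has a genuine gap, and it sits exactly where you flag the ``delicate case'': at no value of $q$ can the sphere-directional alternative do the job for the whole link. When $C<2B$ (the only nontrivial case) \emph{no} Hamiltonian diffeomorphism of $S^2_{2B+C}$ displaces $L_0\sqcup L_1$ from itself, for area reasons; so at every $q$ there are ``returning'' initial points $z_0\in L_0\sqcup L_1$ whose time-one image lies back on the link, and disjointness must come entirely from the $p$-shift at \emph{all} of these points, for \emph{all} $q$. With your ansatz the shift is $-\alpha'(q)\int_0^1\bigl(H^0_t-H^1_t\bigr)(z(t))\,dt-\gamma'(q)$ (note the integrand is evaluated along the trajectory, so ``$H^0-H^1$ nonvanishing on $L_0\sqcup L_1$'' is not the right hypothesis; one can at best arrange $H^0_t-H^1_t\in[\delta,\Delta]$ with $0<\delta<\Delta$ by adding constants). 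Since $\alpha$ and $\gamma$ are functions on $S^1$, $\alpha'$ and $\gamma'$ each integrate to zero over $q$, and this forces cancellation: if you demand the shift be nonzero for every possible value $a\in[\delta,\Delta]$ of the trajectory integral, then $(\alpha'(q),\gamma'(q))$ must avoid the closed double cone $\{\,a x+y=0,\ a\in[\delta,\Delta]\,\}$, hence stays in one open convex half-cone not containing $0$, contradicting $\int_{S^1}\alpha'=\int_{S^1}\gamma'=0$. So the proposed correction by $\gamma(q)$ cannot work uniformly, and to salvage the scheme you would need fine control of exactly which returning trajectories occur and which integral values they realize -- nothing in the proposal provides this. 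In short, the case analysis in $q$ that you defer is the whole content of the lemma, and the natural way of carrying it out is obstructed.

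The paper's proof runs on a different mechanism and, notably, does \emph{not} use that each $L_i$ is individually displaceable (your stated ``essential input''). Place $L_0=\{z=\eta\}$, $L_1=\{z=-\eta\}$ on the round sphere and take $H=4\pi z$, which generates the loop of rotations about the polar axis through total angle $4\pi$; this loop \emph{preserves} each $L_i$, its time-one map is the identity, and it is contractible rel endpoints inside $SO(3)\subset\mathrm{Ham}(S^2)$ because it is twice the generator of $\pi_1(SO(3))$. The Lagrangian suspension of the constant loop is $(L_0\sqcup L_1)\times S^1$ (the zero section direction), while the suspension of the rotation loop is $\{(\phi^H_t(p),H(p),t)\}$, which lies in $\{|p|=4\pi\eta\}$ and is therefore disjoint from the zero-section suspension since $H\neq 0$ on the link. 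Contractibility of the loop makes the two suspensions exact Lagrangian isotopic, hence Hamiltonian isotopic in $S^2\times T^*S^1$, which is the displacement. The essential inputs are thus the nonvanishing of the generating Hamiltonian on the link and the triviality of the loop in $\pi_1$ of the Hamiltonian group -- global features that your fibrewise interpolation does not capture.
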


\begin{proof}
Let $M=\{x^2+y^2+z^2=R\} \subset \R^3$ with the induced round metric and area form.
Let $L_0=\{z=\eta \}$ and $L_1=\{z=-\eta \}$ for some $\eta>0$ such that the complement of 
$L_0 \sqcup L_1$ consists of two discs with area $B$ and a cylinder of area $C$.
Let $H:M \to \R$ be $H(x,y,z)=4 \pi z$.
The Hamiltonian flow of $H$ rotates the sphere, and the  induced time one Hamiltonian diffeomorphism $\phi^H_1$ is the identity arising as twice the full rotation of $M$.
The loop of Hamiltonian diffeomorphisms $(\phi^H_t)_{t \in [0,1]}$ is homotopic to the constant loop relative to the identity map, the homotopy taking place in $SO(3) \subset \Ham(M)$.
Therefore, the Lagrangian suspensions of these two Hamiltonian loops are exact Lagrangian isotopic.
For the constant loop at the identity, the Lagrangian suspension is $(L_0 \sqcup L_1) \times S^1$.
For the original loop $(\phi^H_t)_{t \in [0,1]}$, the Lagrangian suspension is
\begin{align}
\{(\phi^H_t(p),H(p),t) \in M \times \R \times S^1 \, \big| \,  p \in L_0 \sqcup L_1\}
\end{align}
where $\R \times S^1$ is identified with $T^*S^1$.
Since $H(p) \neq 0$ for all $p \in L_0 \sqcup L_1$, the two Lagrangian suspensions are disjoint.
 \end{proof}

\vspace{0.2cm}
\noindent \textbf{Acknowledgements.} This project was influenced by a set of related questions raised to the authors by  Emmanuel Opshtein.  
We are grateful to Pierrick Bousseau for discussions of his ongoing work on scattering diagrams \cite{Bousseau},  to Mohammed Abouzaid and Kaoru Ono for their interest, and to the referee for numerous helpful comments and expositional suggestions. 

The authors are partially supported by Fellowship EP / N01815X / 1 from the Engineering and Physical Sciences Research Council, U.K.

\section{Superpotentials and bulk deformation}

We summarise the theory of superpotentials for Lagrangian Floer cohomology, to establish notation which will recur later in the paper. 
More details can be found in \cite{FO3-1, FO3-2, FOOOtoric, FOOObulk, FO3-S2xS2}.

\subsection{Lagrangian Floer theory on symplectic manifolds}

Let $\cM^\circ_{k+1,l}$ be the moduli space of closed unit discs $S$ with $k+1$ boundary marked points $z_0,\dots,z_k$, ordered counterclockwise,
and $l$ interior marked points $z_1^+,\dots z_{l}^+$.
We denote the tuples $(z_0,\dots,z_{k})$ and $(z_1^+,\dots z_{l}^+)$ by $\ul{z}$ and $\ul{z}^+$, respectively.
The moduli space $\cM^\circ_{k+1,l}$ can be compactified by semi-stable nodal curves, and we denote the compactification by $\cM_{k+1,l}$.
By slight abuse of notation, a typical element in $\cM_{k+1,l}$ is also denoted by $(S,\ul{z},\ul{z}^+)$.

Let $(X,\omega_X)$ be a closed symplectic manifold, and $L\subset X$  a closed, oriented and spin Lagrangian submanifold.
For an $\omega_X$-tamed almost complex structure $J$ and a class $\beta \in H_2(X,L)$, a $J$-holomorphic stable map
with $k+1$ boundary marked points and $l$ interior marked points in class $\beta$ consists of
$((S,\ul{z},\ul{z}^+),u)$ such that
\begin{enumerate}
 \item $(S,\ul{z},\ul{z}^+) \in \cM_{k+1,l}$,
 \item $u:(S,\partial S) \to (X,L)$ is $J$-holomorphic and $u_*[S,\partial S]=\beta$,
 \item the automorphism group of $((S,\ul{z},\ul{z}^+),u)$ is finite.
\end{enumerate}
We denote the set of isomorphism classes of $((S,\ul{z},\ul{z}^+),u)$ by $\cM_{k+1,l}(X,L;J,\beta)$, or $\cM_{k+1,l}(L;\beta)$ for simplicity.

\begin{rmk}
This space is denoted by $\mathcal{M}^{\operatorname{main}}_{k+1,l}(X,L;J,\beta)$ in \cite{FO3-1, FO3-2}.\end{rmk}

There is a Kuranishi structure on $\cM_{k+1,l}(X,L;J,\beta)$ such that the evaluation maps
\begin{align}
 \ev_i:\cM_{k+1,l}(L;\beta) \to L, \quad i=0,\dots,k, \quad \ev^+_j:\cM_{k+1,l}(L;\beta) \to X, \quad j=1,\dots,l \label{eq:ev}
\end{align}
defined by $\ev_i((S,\ul{z},\ul{z}^+),u)=u(z_i)$ and $\ev^+_j((S,\ul{z},\ul{z}^+),u)=u(z_j^+)$ are weakly submersive.
We can take fiber products with smooth singular simplices $g_j:Q_j \to X$ for $j=1,\dots,l$
and $f_i:P_i \to L$ for $i=1,\dots,k$
to obtain a space $\cM_{k+1,l}(L;\beta; \ul{Q},\ul{P})$, 
where $\ul{Q}=(g_1,\dots,g_l)$ and $\ul{P}=(f_1,\dots,f_k)$.
By a multi-valued perturbation of the Kuranishi structure and a suitable triangulation of its zero locus, we obtain a singular chain
\begin{align}
 \ev_0:\cM_{k+1,l}(L;\beta; \ul{Q},\ul{P}) \to L \label{eq:vfc}
\end{align}
which is called the virtual fundamental chain and we denote it by $(\cM_{k+1,l}(L;\beta; \ul{Q},\ul{P}),\ev_0)$.

Let 
\begin{align*}
 \Lambda_0:=\left\{\sum_{i=0}^{\infty} a_i T^{\lambda_i} \ \Big | \  a_i \in \CC, 0 \le \lambda_i < \lambda_{i+1}, \lim_{i \to \infty} \lambda_i =\infty\right\}
\end{align*}
be the Novikov ring and $\Lambda_+$ be its maximal ideal.
Let $C(L;\CC)$ be the singular cochain complex\footnote{Strictly, it is necessary to work with a countably generated subcomplex of the smooth 
singular chain complex, which we grade cohomologically, i.e. by codimension.
In a slight abuse of notation, we use $e_L$ to denote both a simplicial representative of the fundamental class in $C^0(L)$, and the unit  $1 \in H^0(L)$ after we take a minimal model and pass to cohomology.}  of $L$, and
$C(L;\Lambda_0)$ be the completion of $C(L;\CC) \otimes_{\CC} \Lambda_0$ with respect to the $\R$-filtration on $\Lambda_0$.

The $A_{\infty}$ operations $m_k:C(L;\Lambda_0)^{\otimes k} \to C(L;\Lambda_0)$ are defined as follows:
\begin{align}
 m_{k,\beta}(P_1,\dots,P_k)&=(\cM_{k+1,0}(L;\beta;P_1,\dots,P_k),\ev_0)  \text{ if }\beta \neq 0  \text{ or } k\neq 0,1 \label{eq:mkbeta}\\
 m_{1,0}(P)&=\partial P \quad \text{ and }\quad m_{0,0}=0 \\
 m_{k}&=\sum_{\beta} m_{k,\beta} \otimes T^{\omega_X(\beta)}.
\end{align}
The right hand side of \eqref{eq:mkbeta} is the virtual fundamental chain 
$\ev_0:\cM_{k+1,0}(L;\beta;P_1,\dots,P_k)\to L$, which is a special case of \eqref{eq:vfc}.
These make $(C(L;\Lambda_0),\{m_k\}_{k=0}^\infty)$ a filtered $A_{\infty}$ algebra.
It is quasi-isomorphic to an $A_{\infty}$ algebra on the cohomology $(H(L;\Lambda_0),\{m_k^{\can}\}_{k=0}^\infty)$, which is called a canonical or minimal model for $(C(L;\Lambda_0),\{m_k\}_{k=0}^\infty)$.
By abuse of notation, we denote $m_k^{\can}$ by $m_k$.

The $A_{\infty}$ structure can be deformed by one or both of a choice of $\bfb \in H_*(X,\Lambda_+)$
and $b \in H^1(L,\Lambda_0)$.
We first discuss the deformation by $\bfb$. The deformation by $b$ is explained in \eqref{eq:b1} to \eqref{eq:b4}.
Given singular simplices $g_j:Q_j \to X$ for $j=1,\dots,l$, we define
\begin{align}
q'_{l,k,\beta}(\ul{Q};\ul{P})&=\frac{1}{l !} (\cM_{k+1,l}(L;\beta; \ul{Q},\ul{P}), \ev_0) \label{eq:qbetalk}
\end{align}
The right hand side of \eqref{eq:qbetalk} is $\frac{1}{l !}$ multiples of the virtual fundamental chain 
$\ev_0:\cM_{k+1,l}(L;\beta; \ul{Q},\ul{P})\to L$, which is a special case of \eqref{eq:vfc}.
By linear extension, we get a map
\begin{align}
q'_{l,k,\beta}: (C_*(X,\Lambda_+))^{\otimes l} \times C(L,\Lambda_0)^{\otimes k} \to C(L,\Lambda_0)
\end{align}
There is a subspace $E_l(C_*(X,\Lambda_+))$ of $(C_*(X,\Lambda_+))^{\otimes l}$ consisting of chains that are 
invariant under the action of the $l$-th symmetric group permuting the factors;  we denote the restriction of $q'_{l,k,\beta}$ to $E_l(C_*(X,\Lambda_+)) \times C(L,\Lambda_0)^{\otimes k}$ by $q_{l,k,\beta}$.
More explicitly, given $Q:=\sum_{j=1}^s (g_j:Q \to X) \in C_*(X,\Lambda_+)$ representing $\bfb$, 
$Q^{\otimes l}$ is a chain inside $E_l(C_*(X,\Lambda_+))$ and
we define
\begin{align}
 q_{l,k,\beta}(Q^{\otimes l};\ul{P})&=q'_{l,k,\beta}(Q^{\otimes l};\ul{P}) \label{eq:qlk0} \\
 q_{l,k}&=\sum_\beta q_{l,k,\beta} \otimes T^{\omega_X(\beta)} \label{eq:qlk}.
\end{align}
One can again use homological perturbation to pass to (co)homology, obtaining maps
\begin{align}
 H_*(X,\Lambda_+) \otimes (H(L,\Lambda_0))^{\otimes k} \to& H(L,\Lambda_0) \\
 (\bfb,x_1,\dots,x_k) \mapsto& m_k^{\bfb}(x_1,\dots,x_k):=\sum_{l=0}^{\infty} q_{l,k}(\bfb^{\otimes l},x_1,\dots,x_k) \label{eq:mbk}
\end{align}
We can define $m_{k,\beta}^{\bfb}$ similarly by replacing $q_{l,k}$ in \eqref{eq:mbk} with $q_{l,k,\beta}$.
The collection of maps $\{m_k^{\bfb}\}_{k=0}^\infty$ defines a filtered $A_{\infty}$ algebra.

Given $b \in H^1(L,\Lambda_0)$, we can write $b=b_0+b_+$  with $b_0 \in H^1(L,\CC)$ and $b_+ \in H^1(L,\Lambda_+)$.   Here we combine Poincar\'e duality on $L$ with the convention that $H(L,\Lambda_0)$ denotes the smooth singular chain complex graded cohomologically, to view $b_0$ as a singular $1$-cocycle and $b_+$ as a $\Lambda_+$-coefficient codimension one smooth singular cycle.  
For $\gamma \in H_1(L,\mathbb{Z})$, we define $\rho_{b_0}(\gamma)$ as $\exp(b_0(\gamma)) \in \mathbb{C}^*$.
Then we define
\begin{align}
 m_{k,\beta}^{\bfb, b_0}&=\rho_{b_0}(\partial \beta) m_{k,\beta}^{\bfb} \label{eq:b1}\\
 m_{k}^{\bfb, b_0}&=\sum_{\beta} m_{k,\beta}^{\bfb, b_0} \otimes T^{\omega_X(\beta)} \label{eq:b2}\\
 m_{k,\beta}^{\bfb, b}(x_1,\dots,x_k)&= \sum_{l=0}^\infty \sum_{d_0+\cdots + d_k = l} m_{k+l,\beta}^{\bfb,b_0}(\underbrace{b_+,\dots,b_+}_{d_0},x_1,\underbrace{b_+,\dots,b_+}_{d_1},x_2,\ldots,x_k,\underbrace{b_+,\dots,b_+}_{d_k}) \label{eq:b3}\\
 m_{k}^{\bfb, b}&= \sum_{\beta} m_{k,\beta}^{\bfb, b} \otimes T^{\omega_X(\beta)} \label{eq:b4}
\end{align}
Note that $m_k^{\bfb,b}=m_k^{\bfb,b'}$ if $b'-b \in H^1(L,2\pi \sqrt{-1} \ZZ)$.
The collection $\{m_k^{\bfb,b}\}_{k=0}^\infty$ also defines a filtered $A_{\infty}$ algebra.

\begin{defn}[Definition 2.3 of \cite{FO3-S2xS2}]\label{d:superpotential}
 Let $\bfb \in H(X,\Lambda_+)$ be a cycle of even codimension.
 A weak bounding cochain for $m^\bfb$ is an element $b=b_0+b_+ \in H^1(L,\Lambda_0)$ such that
 \begin{align} \label{eqn:wbc}
  \sum_{k=1}^{\infty} m_k^{\bfb,b_0}(b_+,\dots,b_+)=c\cdot e_L
 \end{align}
for some $c \in \Lambda_+$, where $e_L$ is the cohomological unit. %denotes the class $PD(L)$ is the Poincare dual of $[L]$.
 Let $\hat{\cM}_{\weak}(L,m^\bfb)$ be the set of all weak bounding cochains
 modulo the equivalence $b \sim b'$ if and only if $b'-b \in H^1(L,2\pi \sqrt{-1} \ZZ)$.
 The potential function
 $W^{\bfb}:\hat{\cM}_{\weak}(L,m^\bfb) \to \Lambda_+$ is defined by
 \begin{align}
  \sum_{k=1}^{\infty} m_k^{\bfb,b_0}(b_+,\dots,b_+)= W^{\bfb}(b)\cdot  e_L. \label{eq:Wdefn}
 \end{align}
\end{defn}

The hypothesis that $\bfb$ has even codimension in Definition \ref{d:superpotential} guarantees that the cohomological degree of
$m_k^{\bfb,b_0}(b_+,\dots,b_+)$ is even for each $k$, which is obviously a necessary condition for \eqref{eqn:wbc} to hold.
 
\begin{thm}[\cite{FO3-2, FOOOtoric, FOOObulk} for the original references, see also Theorem 2.3 of \cite{FO3-S2xS2} for a more accessible reference]\label{t:superpotential}
 Suppose that $L$ is a Lagrangian torus and 
 \begin{align}
  H^1(L,\Lambda_0)/H^1(L,2\pi \sqrt{-1} \ZZ) \subset \hat{\cM}_{\weak}(L,m^\bfb). \label{eq:H1include}
 \end{align}
 If $b \in H^1(L,\Lambda_0)$ is a critical point of the potential function 
 \begin{align}
  W^{\bfb}: H^1(L,\Lambda_0)/H^1(L,2\pi \sqrt{-1} \ZZ) \simeq (\Lambda_0 \setminus \Lambda_+)^n \to \Lambda_+ \label{eq:superpotential}
 \end{align}
 then $m_1^{\bfb,b}=0$, and hence the 
 $(\bfb,b)$-deformed Floer cohomology is isomorphic to $H(L,\Lambda_0)$.
 In particular, $L$ is non-displaceable.
\end{thm}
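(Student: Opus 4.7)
The plan is to show that $m_1^{\bfb,b}$ vanishes on all of $H(L,\Lambda_0)$ under the critical point hypothesis. Once this is established, the bulk-deformed Floer cohomology $HF((L,\bfb,b),(L,\bfb,b))$ coincides with $H(L,\Lambda_0) \neq 0$, and the Hamiltonian invariance of bulk-deformed Floer cohomology \cite{FO3-1,FO3-2,Cho-Poddar} precludes any displacement of $L$, since $HF(L,\phi(L))=0$ whenever $\phi(L)\cap L=\emptyset$.

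First I would observe that the weak Maurer-Cartan equation \eqref{eqn:wbc}, together with the quadratic $A_\infty$ relation for $\{m_k^{\bfb,b}\}$, forces $(m_1^{\bfb,b})^2=0$: the obstructions to being a differential take the form $m_2^{\bfb,b}(m_0^{\bfb,b},-)\pm m_2^{\bfb,b}(-,m_0^{\bfb,b})$ with $m_0^{\bfb,b}=W^{\bfb}(b)\cdot e_L$, and cancel by strict unitality of $e_L$.

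The heart of the argument is a partial derivative identity of FOOO type. Formally differentiating both sides of \eqref{eq:Wdefn} with respect to the coordinate $b_i$ of $b=\sum b_i e_i$ in a basis $\{e_i\}$ of $H^1(L,\Lambda_0)$, the derivative of the left hand side redistributes one insertion $e_i$ among the $b_+$-slots of each $m_k^{\bfb,b_0}$, which by \eqref{eq:b3} reassembles into $m_1^{\bfb,b}(e_i)$, while the right hand side yields $(\partial_{b_i} W^\bfb)(b)\cdot e_L$. Thus, up to Poincar\'e duality on $L$, the restriction $m_1^{\bfb,b}|_{H^1(L)}$ encodes the gradient of $W^\bfb$, and vanishes whenever $b$ is a critical point. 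To propagate the vanishing to all of $H(L,\Lambda_0)$, I would use the cubic $A_\infty$ relation: strict unitality eliminates the $m_3^{\bfb,b}$ correction terms involving $m_0^{\bfb,b}=W^\bfb(b)\cdot e_L$, so $m_1^{\bfb,b}$ is a graded derivation of the product $m_2^{\bfb,b}$. Since $L$ is a torus, $H^*(L)=\Lambda^*(H^1(L))$ is generated as an algebra by $H^1(L)$ and $e_L$; because the classical cup product is the $T^0$-part of $m_2^{\bfb,b}$, an iteration on the Novikov filtration expresses every cohomology class as an $m_2^{\bfb,b}$-polynomial in these generators, on which the derivation $m_1^{\bfb,b}$ is forced to vanish.

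The principal obstacle is the partial derivative identity at the chain level: rigorously, it requires careful compatibilities among the operations $q_{l,k,\beta}$ under relabelling and forgetting of boundary marked points, together with the passage to a minimal model preserving strict unitality. I would treat this, along with the Hamiltonian invariance of bulk-deformed Floer cohomology, as inputs from the cited literature.
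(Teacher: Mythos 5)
The paper does not actually prove Theorem \ref{t:superpotential}: it is imported from the cited literature (\cite{FO3-2, FOOOtoric, FOOObulk}, with Theorem 2.3 of \cite{FO3-S2xS2} for the torus case; the introduction even invites the reader to take it as an axiom), and your sketch is precisely the standard argument from those references — the identity $m_1^{\bfb,b}(e_i)=\partial_{e_i}W^{\bfb}(b)\cdot e_L$ obtained by differentiating \eqref{eq:Wdefn}, the derivation property of $m_1^{\bfb,b}$ against $m_2^{\bfb,b}$ via strict unitality of the canonical model, generation of $H^*(T^n)$ by $H^1$ together with an induction over the Novikov filtration, and Hamiltonian invariance of the deformed Floer cohomology — so it is correct as an outline, with the same chain-level inputs deferred to the literature that the paper itself defers. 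The only point worth making explicit is that the hypothesis \eqref{eq:H1include} is exactly what allows you to differentiate $W^{\bfb}$ in arbitrary directions of $H^1(L,\Lambda_0)$ (otherwise $W^{\bfb}$ is only defined on $\hat{\cM}_{\weak}(L,m^\bfb)$), so it should be invoked at that step rather than used implicitly.
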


 In \eqref{eq:superpotential}, the identification between $H^1(L,\Lambda_0)/H^1(L,2\pi \sqrt{-1} \ZZ)=(\Lambda_0/2\pi \sqrt{-1} \ZZ)^n$
 and $(\Lambda_0 \setminus \Lambda_+)^n$ is via the exponential map.

\begin{remark}\label{r:toric}
In Section 11 of \cite{FOOObulk}, $\bfb$ is taken to lie inside the subspace generated by the
fundamental chains of the toric invariant divisors. This condition enters their argument in ensuring that \eqref{eq:H1include} is satisfied (see Proposition 2.1 of \cite{FO3-S2xS2}). It is explained in \cite{FO3-S2xS2} that, for the special case in which $L$ is a Lagrangian torus,  Theorem \ref{t:superpotential} holds as long as  \eqref{eq:H1include} holds. 
%However, by degree reason, \eqref{eq:H1include} is satisfied only if $\bfb$ represents an even codimension cycle.
\end{remark}

For computational purposes, we recall the divisor axioms (see \cite[Lemma 7.1]{FOOObulk}, \cite[Lemma 11.8]{FOOOtoric} and \cite{FukayaCyclic}).
Fix $b \in H^1(L,\Lambda_0)$ as before. Let $\bfb$ be a chain representing a codimension $2$ cycle in in $H(X,\Lambda_+)$ that is disjoint from $L$. Then we have
\begin{align}
%m_{k,\beta}^{b_0}(b_+^{\otimes k})&=\frac{b(\partial \beta)^k}{k !} m_{0,\beta}(1) \label{eq:D1} \\
q_{l,k,\beta}(\bfb^{\otimes l},b_+^{\otimes k})&=\frac{(b_+(\partial \beta))^k}{k !} q_{l,0,\beta}(\bfb^{\otimes l},1)\label{eq:D1} \\
q_{l,k,\beta}(\bfb^{\otimes l},x_1,\dots,x_k)&=\frac{(\bfb \cdot \beta)^l}{l !}q_{0,k,\beta}(\mathbf{1},x_1,\dots,x_k) \label{eq:D2}
\end{align}
The geometric reason behind \eqref{eq:D1} 
is that if $u$ is a $J$-holomorphic disc in $\cM_{k+1,l}(L;\beta; \ul{Q},\ul{P})$ contributing to 
%$m_{k,\beta}(b_+^{\otimes k})$,
$q_{l,k,\beta}(\bfb^{\otimes l},b_+^{\otimes k})$,
then taking the fiber product between one of the evaluation maps at a boundary marked point of $\cM_{k+1,l}(L;\beta; \ul{Q},\ul{P})$ and the Poincar\'e dual of $b_+$ in $L$, the resulting moduli space will consist of 
 $J$-holomorphic discs with $k$ boundary marked points contributing to 
%$m_{k-1,\beta}(b_+^{\otimes {k-1}})$.
$q_{l,k-1,\beta}(\bfb^{\otimes l},b_+^{\otimes k-1})$.
Therefore, when the perturbation scheme in the construction of the Kuranish structures is equivariant with respect to the
cyclic group action permuting the boundary marked points, we would have
\begin{align}
%m_{k,\beta}(b_+^{\otimes k})=\frac{b(\partial \beta)}{k}m_{k-1,\beta}(b_+^{\otimes k}) \label{eq:D3}
q_{l,k,\beta}(\bfb^{\otimes l},b_+^{\otimes k})&=\frac{b_+(\partial \beta)}{k } q_{l,k-1,\beta}(\bfb^{\otimes l},b_+^{\otimes k-1}) \label{eq:D3}
\end{align}
The construction of such Kuranishi structures is carried out in the references above, and clearly \eqref{eq:D1} can be obtained by inductively applying \eqref{eq:D3}.
Similarly, we have 
\begin{align}
q_{l,k,\beta}(\bfb^{\otimes l},x_1,\dots,x_k)=\frac{(\bfb \cdot \beta)}{l }q_{l-1,k,\beta}(\bfb^{\otimes l-1},x_1,\dots,x_k) \label{eq:D4}
\end{align} 
because the perturbation scheme can also be made 
 equivariant with respect to the
cyclic group action permuting the interior marked points, and again \eqref{eq:D2} can be obtained by inductively applying \eqref{eq:D4}.

\begin{remark}\label{r:CW}
The boundary divisor axiom has not been proved in the Charest-Woodward setup. Instead, they prove a `weak boundary divisor axiom' in \cite[Proposition 4.33]{Charest-Woodward}, and show that is sufficient to obtain the analogue of Theorem \ref{t:superpotential} in \cite[Proposition 4.34]{Charest-Woodward} if one replaces the potential function above by the `disc potential function' $\mathcal{W}$ (defined in \cite[Equation (4.44)]{Charest-Woodward}). The disc potential function equals the potential function when the boundary divisor axiom holds (see the discussion before \cite[Lemma 4.39]{Charest-Woodward}). It is defined without boundary insertions and takes into account the local system directly. For example, in the computation below, \eqref{eq:Ww4} would be the {\it definition} of the disc potential function. 
%The Charest-Woodward approach has not been developed for symplectic orbifolds yet. 
%Our paper mainly follows the exposition of Fukaya-Oh-Ohta-Ono but readers who are not comfortable with the boundary divisor axiom can safely replace the potential function by the disc potential function and follow t
\end{remark}

Since $q_{0,k,\beta}(\mathbf{1},x_1,\dots,x_k)$ is precisely $m_{k,\beta}(x_1,\dots,x_k)$
and $q_{0,0,\beta}(\mathbf{1},1)$ is  $m_{0,\beta}(1)$,
%by combining \eqref{eq:D1} and \eqref{eq:D2}, 
we have
 \begin{align}
 W^{\bfb}(b)\cdot  e_L=& \sum_{k=1}^{\infty} \sum_{\beta}T^{\omega_X(\beta)} m_{k,\beta}^{\bfb,b_0}(b_+,\dots,b_+) \label{eq:Ww1} \\
=&    \sum_{k=1}^{\infty} \sum_{\beta} T^{\omega_X(\beta)}\rho_{b_0}(\partial \beta) m_{k,\beta}^{\bfb}(b_+,\dots,b_+)  \\
=&\sum_{k=1}^{\infty} \sum_{\beta} \sum_{l=1}^{\infty} T^{\omega_X(\beta)}\rho_{b_0}(\partial \beta) \frac{(\bfb \cdot \beta)^l}{l !}q_{0,k,\beta}(\mathbf{1},b_+,\dots,b_+) \\
=& \sum_{k=1}^{\infty} \sum_{\beta} \sum_{l=1}^{\infty} T^{\omega_X(\beta)}\rho_{b_0}(\partial \beta) \frac{(\bfb \cdot \beta)^l}{l !}
\frac{(b_+(\partial \beta))^k}{k !} m_{0,\beta}(1)\\
=&\sum_{\beta} T^{\omega_X(\beta)}\exp(\bfb \cdot \beta) \exp( b(\partial \beta)) m_{0,\beta}(1) \label{eq:Ww4}
 \end{align}
where the first equality comes from the definitions \eqref{eq:Wdefn} and \eqref{eq:b2}, the second
equality comes from definition \eqref{eq:b1}, the third equality comes from \eqref{eq:D2},
the fourth equality comes from \eqref{eq:D1} (for the case $l=0$),
and the last equality comes from summing over $k$ and $l$.

If $X$ is a Fano toric manifold, $L$ is a Lagrangian torus fiber and $\beta$ is a disc class such that
$m_{0,\beta}(1)=e_L$ (this happens when $\beta$ is a `basic disc class' in the sense of \cite{FOOOtoric}), then the family of Maslov $2$ discs in class $\beta$ contributes the term
$T^{\omega_X(\beta)}\exp(\bfb \cdot \beta) \exp( b(\partial \beta))$ to $W^{\bfb}(b)$.
We can write $\partial \beta$ as $\sum_{i=1}^n a_i X_i$, where $\{X_i\}_{i=1}^n$ is a basis of $H_1(L,\mathbb{Z})$.
Letting $Y_i:=\exp(b(X_i))$, we then have the familiar formula
\begin{align}
 W^{\bfb}_L(Y_1,\dots,Y_n):=W^{\bfb}(b) =\sum_{\beta} T^{\omega_X(\beta)} \exp(\bfb \cdot \beta)  \prod_{i=1}^n Y_i^{a_i}. \label{eq:familiarW}
\end{align}

\subsection{Lagrangian Floer theory on symplectic orbifolds}\label{s:orbifold}

We need a generalization of Theorem \ref{t:superpotential} to the case that $X$ is an effective symplectic orbifold, but $L \subset X$ 
is assumed to be disjoint from the orbifold strata, i.e. $L\subset X^{reg}$ is contained in the locus of smooth points of $X$.
This generalization is carried out in \cite{Cho-Poddar}. For background on symplectic orbifolds, and in particular their symplectic forms and (contractible) spaces of compatible almost complex structures, we refer the reader to \cite{Chen-Ruan02}.
For background on orbifolds, including inertia orbifolds, orbifold morphisms, groupoids, etc.~we refer the reader to the book \cite{OrbifoldBook} and references therein.

In the orbifold case, the construction of the $A_{\infty}$-structure goes through without substantive  changes provided we consider $J$-holomorphic stable maps from  $\cM_{k+1,l}$ for which all irreducible components are {\it smooth}. 
More interestingly, in the orbifold case there are new bulk deformation directions for the superpotential coming from cycles in other components of the inertia orbifold $IX$ of $X$, as probed by $J$-holomorphic stable {\it orbifold} discs. 
This flexibility increases the chance of finding a bulk deformed superpotential that has a critical point.
We briefly recall the definition of the inertia orbifold $IX$, and how orbifold discs enter the story.

As a set, the inertia orbifold of an orbifold $X$ is
\begin{align}
IX:=\{(x,g): x \in X, g \text{ a conjugacy class in } G_x\}
\end{align}
where $G_x$ is the isotropy group of $x$.
%and $(g)_{G_p}$ is the conjugacy class of $g$ in $G_p$.
It has an induced orbifold structure from $X$ (see \cite[Lemma 3.1.1]{Chen-Ruan}; note that $IX$ need not be connected even when $X$ is connected).
The orbifold we are going to consider in this paper is a global quotient of a smooth manifold $M$ by a finite group $G$.  In this case, there is a simpler way to describe the orbifold structure on its inertia orbifold, which we now recall (see \cite[Example 3.1.3]{Chen-Ruan}).
%We will focus on this case, while the definition of the general case can be found in, for example, \cite[Definition 3.1.2]{Chen-Ruan}.
For each conjugacy class $g$ in $G$, we pick an element $h \in g$ and define the orbifold
\begin{align}
X_g:=M^h/C(h)
\end{align}
where $M^h \subset M$ is the set of fixed points of $h$ and $C(h)$ is the centralizer of $h$.
As suggested by the notation, up to isomorphism $X_g$ is independent of the choice of $h$ in $g$.
In particular, for $e \in G$ being the identity element, we have $M^e=M$ and $C(e)=G$ so $X_e=M/G$.
The inertia orbifold $IX$ is the disjoint union of the orbifolds
\begin{align}
IX:=\sqcup_{g} X_g
\end{align}
where $g$ runs over the conjugacy classes of $G$; each $X_g$ is called an inertia component of $IX$. We  denote $X_e$ by $X$. Note that $IX$ only depends on $X$.

An orbifold disc with $k+1$ boundary marked points and $l$ interior marked points
is a tuple $(S,\ul{z},\ul{z}^+,\ul{m})$, where  $(S,\ul{z},\ul{z}^+) \in \cM^\circ_{k+1,l}$
and $\ul{m}=(m_1,\dots,m_l)$ is a tuple of positive integers.
We equip $S$ with the unique orbifold structure 
such that the set of orbifold points is contained in $\ul{z}^+$, and
for each $j=1,\dots,l$, there is a disc neighborhood $U_j$ of $z_j^+$ which is uniformized by the branched covering map
$z \mapsto z^{m_j}$. 
The last condition means that there is an open subset $V_j \subset \mathbb{C}$
invariant with respect to the $G_{z_j^+}:=\mathbb{Z}_{m_j}$ action $z \mapsto z\exp(2\pi \sqrt{-1}/m_j)$, 
together with a surjective $G_{z_j^+}$-invariant complex analytic map $\pi: V_j \to U_j$, such that the the induced map
$\pi: V_j/G_{z_j^+} \to U_j$ is bijective. 
When $m_j=1$, $z_j^+$ is a smooth point.
For a fixed $\ul{m}$, we denote the moduli of such orbifold discs by $\cM^\circ_{k+1,l,\ul{m}}$.
It can be compactified by semi-stable nodal orbifold curves $\cM_{k+1,l,\ul{m}}$.
The union of $\cM_{k+1,l,\ul{m}}$ over all possible $\ul{m}$ is denoted by $\cM_{k+1,l}$.
By abuse of notation, a typical element in $\cM_{k+1,l}$ is also denoted by $(S,\ul{z},\ul{z}^+,\ul{m})$.

\begin{rmk}\label{rmk:good}
Under the classical differential-geometric definitions of orbifold, orbibundle etc as in \cite{Satake, Satake2},  a smooth orbifold map does not give sufficient information to define the pull-back of an orbifold vector bundle.  (This issue arises for maps into the orbifold strata $X \backslash X^{reg}$. It can be traced to the fact that orbifolds should form a 2-category rather than a 1-category. A better formulation of orbifolds as groupoids avoids these difficulties, see \cite{Lerman}; however, these are not the approaches taken by the references for Floer theory on orbifolds.)  To linearise the $\overline{\partial}$-operator at an orbifold map requires one to pull back the tangent bundle, so some restriction on the orbifold maps under consideration is essential for contructing the moduli space of $J$-holomorphic stable orbifold discs. 

The extra information required to define the pullback is a choice of a `compatible system' in the sense of \cite[Definition 4.4.1]{Chen-Ruan02}, \cite[Definition 16.12]{Cho-Poddar}.  It is possible that a smooth orbifold map has no compatible system, or has more than one isomorphism class of compatible systems.  (Two compatible systems for an orbifold map $u$ are isomorphic precisely when they give rise to isomorphic pullback orbifold bundles $u^*E$ for all orbifold bundles $E$ on the target, cf. \cite[Lemma 16.1]{Cho-Poddar}.) 
A  smooth orbifold map that has a compatible system is called {\it good}.

A compatible system comprises an indexing set $I$ and a covering of the domain and an open neighbourhood of the range of the given orbifold map by charts indexed by $I$ (in particular the charts are in bijection in domain and range), satisfying a number of compatibilities under inclusion maps and with respect to prescribed local uniformizers. The precise definition of compatible system will not play a role in the sequel, so we defer such to the references. One fact we shall need is that  a  compatible system induces a group homomorphism between the isotropy groups $G_z \to G_{u(z)}$ for $z \in S$, 
see Definition 16.11 (2)(b) of \cite{Cho-Poddar} and the subsequent paragraph, or the paragraph before \cite[Definition 4.4.1]{Chen-Ruan02}, and the 
%. Under an isomorphism of compatible systems, this group homomorphism is changed by post-composition with an automorphism of the target $G_{u(z)}$, in particular its 
(non-)injectivity of this homomorphism depends only on the isomorphism class of $\xi$.

Despite the differences in appearance, it turns out that the definitions  of `good maps' and `morphisms of orbifolds as groupoids' are equivalent (see \cite[Section 2.4]{OrbifoldBook}).
\end{rmk}

\begin{defn}[Definition 2.5 of \cite{Cho-Poddar}, see also Definition 2.3.3 of \cite{Chen-Ruan}]\label{d:Jmap}
Let $J$ be an almost complex structure on $X$.
A $J$-holomorphic stable map from $(S,\ul{z},\ul{z}^+,\ul{m})$ to $(X,L)$
is a pair $(u,\xi)$
such that
\begin{enumerate}
\item $u:(S,\ul{z},\ul{z}^+,\ul{m}) \to X$ is a $J$-holomorphic map (i.e. $J$-holomorphic on each irreducible component) and $u(\partial S) \subset L$;
\item $u$ is a good smooth orbifold map and $\xi$ is an isomorphism class of compatible system; % (in the sense of Definition 16.11 and 16.12 of \cite{Cho-Poddar}, see also \cite[Definition 4.4.1]{Chen-Ruan02} and the preceding paragraph, and Remark \ref{rmk:good} below);
\item the group homomorphism $G_{z_j^+} \to G_{u(z_j^+)}$ induced by $\xi$ at each orbifold point $z_j^+ \in \ul{z}^+$ is injective, where $G_{z_j^+}$ and $G_{u(z_j^+)}$ are the isotropy groups of $z_j^+$ and $u(z_j^+)$, respectively;% (for the definition of the group homomorphism $G_{z_j^+} \to G_{u(z_j^+)}$, ;
\item the set of $\phi:S \to S$ satisfying the following properties is finite: $\phi$ is biholomorphic, $\phi(z_i)=z_i$ for all $i$, $\phi(z_j^+)=z_j^+$ for all $j$ and $u \circ \phi=u$.
\end{enumerate}
\end{defn}

Note that if $z \in S \setminus \ul{z}^+$, $G_z$ is the trivial group and the injectivity in the third condition of Definition \ref{d:Jmap} is automatic.

 %A compatible system induces a group homomorphism between the isotropy groups $G_z \to G_{u(z)}$ for $z \in S$, and the third condition in Definition \ref{d:Jmap} requires that this map be injective (note that if $z \in S \setminus \ul{z}^+$, $G_z$ is the trivial group so the injectivity is automatic).

In the sequel, we will encounter $J$-holomorphic orbifold discs with Lagrangian boundary conditions (lying in the regular locus) and with no bubble components. The following Lemma \ref{l:GoodMaps} will ensure that such are good.

\begin{lemma}[Lemma 4.4.11 of \cite{Chen-Ruan02}]\label{l:GoodMaps}
Let $u:S \to X$ be a smooth orbifold map.
If $S$ is irreducible and $u^{-1}(X^{\reg})$ is connected and dense, then $u$ is good with a unique choice of  isomorphism class of compatible system.
\end{lemma}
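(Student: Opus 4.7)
The plan is to establish both existence and uniqueness of the compatible system by exploiting the fact that on $u^{-1}(X^{\reg})$ there is an essentially canonical trivial choice, and that the density and connectedness hypotheses propagate this data to all of $S$.

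First I would recall the structure: a compatible system $\xi$ for $u$ assigns, to each uniformizing chart $(\tilde V_z, G_z, \pi_V)$ near a point $z \in S$ and a chosen chart $(\tilde U_{u(z)}, G_{u(z)}, \pi_U)$ near $u(z) \in X$, a local lift $\tilde u_z : \tilde V_z \to \tilde U_{u(z)}$ of $u$ together with a group homomorphism $G_z \to G_{u(z)}$ intertwining the two actions, and this data is required to be compatible under inclusions of charts. For $z$ with $u(z) \in X^{\reg}$, the chart $\tilde U_{u(z)}$ can be chosen so that $G_{u(z)}$ is trivial; the lift $\tilde u_z$ and the trivial homomorphism then exist and are unique, and all compatibility relations hold tautologically.

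For existence, I would build the compatible system by cases. Over $u^{-1}(X^{\reg})$ the construction is forced as above. Near a point $z \in S$ with $u(z)$ lying on the orbifold stratum, fix a chart $(\tilde U_{u(z)}, G_{u(z)}, \pi_U)$ and a chart $(\tilde V_z, G_z, \pi_V)$, and choose any continuous lift $\tilde u_z$ of $u|_{V_z}$; the set of lifts is a $G_{u(z)}$-torsor. Because $u^{-1}(X^{\reg})$ is dense in $S$, the open subset $\tilde V_z \setminus \pi_V^{-1}(\ul{z}^+)$ is nonempty, and on this locus $\pi_U$ is an honest covering of $U_{u(z)} \cap X^{\reg}$; matching $\tilde u_z$ to the previously chosen lift on a single small regular neighbourhood of a point of $u^{-1}(X^{\reg}) \cap V_z$ fixes the torsor ambiguity. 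Connectedness of $u^{-1}(X^{\reg})$ then guarantees that these local choices agree on overlaps throughout $S$: otherwise the discrepancy would define a locally constant, nontrivial $G$-valued function on the connected set $u^{-1}(X^{\reg})$, a contradiction. The required homomorphism $G_z \to G_{u(z)}$ is then determined by how the deck transformations act on this uniquely extended lift.

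For uniqueness, given two compatible systems $\xi, \xi'$, the charts and lifts over $u^{-1}(X^{\reg})$ agree up to canonical isomorphism by the triviality observation. Density and connectedness promote this to an isomorphism of the full compatible systems: over any chart meeting the orbifold stratum, the isomorphism is determined by its restriction to the dense open subset mapping to $X^{\reg}$.

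The main obstacle will be formalising the "extension by continuity" of the lift across points of $S$ with nontrivial isotropy, in particular verifying that the induced homomorphism $G_z \to G_{u(z)}$ is uniquely determined. The $G_z$-action permutes the sheets of $\tilde V_z \to V_z$; composing with the chosen $\tilde u_z$ and comparing against the $G_{u(z)}$-action on $\tilde U_{u(z)}$ determines the homomorphism on regular fibres, and the continuous extension forces a unique value at $z$. Irreducibility of $S$ is essential to prevent $u^{-1}(X^{\reg})$ from splitting along distinct components on which independent, inequivalent lifts could have been chosen; it is precisely this hypothesis that rules out the standard obstruction to goodness.
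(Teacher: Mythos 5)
The paper does not prove this lemma: it is imported verbatim from Chen--Ruan (Lemma 4.4.11 of \cite{Chen-Ruan02}), so the only thing to compare your argument with is that source. Your overall strategy -- over $u^{-1}(X^{\reg})$ the local lifts are canonical because the isotropy of the target is trivial, and density plus connectedness should propagate this to all of $S$ -- is indeed the expected one, and it is roughly how the cited proof is organised.

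However, there is a genuine gap exactly at the step you yourself flag as ``the main obstacle''. You treat the locus where lifting is problematic as the finite set of orbifold marked points of the domain (you pass to $\tilde V_z \setminus \pi_V^{-1}(\ul{z}^+)$ and assert that $\pi_U$ is an honest covering of $U_{u(z)} \cap X^{\reg}$ there), but the lemma is about an arbitrary \emph{smooth} orbifold map: the relevant bad set is $u^{-1}(X \setminus X^{\reg})$, which need not be finite and can perfectly well be, say, a curve through $z$ with dense complement. Connectedness of $u^{-1}(X^{\reg})$ is a global hypothesis and does not make $u^{-1}(X^{\reg}) \cap V_z$ connected in a small chart, and once that local set is disconnected your key claim -- that the local lifts form a $G_{u(z)}$-torsor, so that matching on one small regular neighbourhood fixes the lift -- fails. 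Concretely, for a target of the form $\mathbb{R}^2/\{\pm 1\}$ and a lift vanishing to infinite order along a curve through $z$, flipping the sign on one local component of the complement produces another smooth lift which is not $g \cdot \tilde u_z$ for any single $g \in G_{u(z)}$. Consequently the uniqueness/consistency step, phrased as ``the discrepancy is a locally constant $G$-valued function on the connected set $u^{-1}(X^{\reg})$'', does not go through as stated (it also conflates the varying groups $G_{u(z)}$ and never verifies the actual compatible-system conditions: equivariance of the lifts, the homomorphisms $G_z \to G_{u(z)}$, and their compatibility under injections of charts, which is where goodness actually lives). This deferred extension-across-the-orbifold-locus step is precisely the content of Chen--Ruan's proof, so as written your argument establishes the easy part and postpones the hard part. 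A minor further point: the role you attribute to irreducibility (preventing independent lifts on separate pieces) is really played by the connectedness hypothesis on $u^{-1}(X^{\reg})$; irreducibility is assumed because the lemma is applied component by component to domains of stable maps.
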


Let $IX$ be the inertia orbifold of $X$.
We denote the index set of the inertia components by $T=\{0\} \cup T'$,
and for $g \in T$, we write $X_g$ for the corresponding component, with $X_0=X$. Elements $x \in X_g$ are written as $(x,g)$.

Let $((S,\ul{z},\ul{z}^+,\ul{m}),u,\xi)$ be a $J$-holomorphic stable map. 
For each $z_j^+ \in \ul{z}^+$, $\xi$ determines a conjugacy class $g$ in $G_{u(z_j^+)}$ (see Remark \ref{r:conjugacyclass}).
%(because the image of a generator of the cyclic group $G_{z_j^+}$ defines an element in $G_{u(z_j^+)}$ and a different choice of a generator results in another element in the same conjugacy class of $G_{u(z_j^+)}$).
We define
\begin{align}
\ev_j^+: ((S,\ul{z},\ul{z}^+,\ul{m}),u,\xi) \mapsto (u(z_j^+), g) \in IX
\end{align}
Fix a map $\bfx: \{1,\dots,l\} \to T$.
A $J$-holomorphic stable map $((S,\ul{z},\ul{z}^+,\ul{m}),u,\xi)$
is of `type' $\bfx$ if 
\begin{align}
\ev_j^+((S,\ul{z},\ul{z}^+,\ul{m}),u,\xi)  \in X_{\bfx(j)}
\end{align}
for all $j=1,\dots,l$.

\begin{remark}\label{r:conjugacyclass}
Around each $z_j^+$, we consider the local fundamental group $\pi_1(U_{z_j^+} \setminus z_j^+)$
where $U_{z_j^+} \subset S$ is a small disc neighborhood of $z_j^+$.
The isomorphism class of $\xi$ determines a group homomorphism (which is not a group isomorphism)
\begin{align}
\theta_{\xi}: \pi_1(U_{z_j^+} \setminus z_j^+) \to G_{u(z_j^+)}
\end{align}
which is well-defined up to conjugation in $G_{u(z_j^+)}$ (see \cite[Lemma 2.2.4]{Chen-Ruan02} and the paragraph before it; in the groupoid language, see the `Chen-Ruan characteristic' in \cite[Section 2.5]{OrbifoldBook}).
The conjugacy class $g$ in $G_{u(z_j^+)}$ determined by $\xi$ is the conjugacy class containing the image of the positive generator of $\pi_1(U_{z_j^+} \setminus z_j^+)$ (positive with respect to the orientation induced by the complex structure) under the map $\theta_{\xi}$.

Equivalently, if $(V_{z_j^+}, G_{z_j^+}, \pi)$ is a uniformizing chart of $U_{z_j^+}$, then the positive generator of 
$\pi_1(U_{z_j^+} \setminus z_j^+)$ defines a deck transformation of $V_{z_j^+}$. There is a unique element $h \in G_{z_j^+}$ whose action on $V_{z_j^+}$ coincides with the deck transformation.
The  conjugacy class $g$ in $G_{u(z_j^+)}$ is the conjugacy class containing the image of $h$ under the map
$G_{z_j^+} \to G_{u(z_j^+)}$ induced by $\xi$. As a result, the injectivity in Definition \ref{d:Jmap} (3), implies that elements in $g$ have order $m_j$.

%Since the map $G_{z_j^+} \to G_{u(z_j^+)}$ induced by $\xi$ is injective and $|G_{z_j^+}|=m_j$, 
%the conjugacy class  $\bfx(j)$ of 
%$G_{u(z_j^+)}$ must contain at least $m_j$ elements.
\end{remark}

\begin{defn}
Let $\beta \in H_2(X,L)$.
The moduli space of isomorphism classes of 
$J$-holomorphic stable maps $((S,\ul{z},\ul{z}^+,\ul{m}),u,\xi)$ to $(X,L)$
of type $\bfx$ and such that $u_*[S,\partial S] =\beta$
is denoted by $\cM_{k+1,l}(L,J,\beta,\bfx)$. \end{defn}

\begin{rmk} This space is denoted by $\mathcal{M}^{\operatorname{main}}_{k+1,l}(L,J,\beta,\bfx)$ in \cite{Cho-Poddar}.\end{rmk}

We refer readers to \cite[Section 3, 8, 10]{Cho-Poddar} (see also \cite[Section 3]{Chen-Ruan02}) for the dimension formulae and Fredholm theory in the orbifold setting. The upshot  is that we can define $m_{k,\beta}$ and $m_k$ as before 
 to obtain a filtered $A_{\infty}$ algebra $(C(L,\Lambda_0), \{m_k\}_{k=0}^\infty)$.
As in the manifold case,
bulk deformation in the orbifold case is 
defined using 
the fiber product between $\cM_{k+1,l}(L,J,\beta,\bfx)$ and appropriate cycles under the evaluation maps $\ev_j^+$.
The crucial difference is that the codomain of $\ev_j^+$ is now the inertia orbifold $IX$, so the cycles used to cut down the image of evaluation are taken in $IX$ instead of in $X$.
For example, if $\bfx(j)=0$ for all $j$, then a stable map $((S,\ul{z},\ul{z}^+,\ul{m}),u,\xi)$ in $\cM_{k+1,l}(L,J,\beta,\bfx)$
must have $m_j=1$ for all $j$, because the local group homomorphism induced by $\xi$ is required to be injective.
This forces all the points $z_j^+$ to be smooth points, and bulk insertions by cycles in $X_{\bfx(j)}=X_0=X$ are defined as before.
If $\bfx(j) \neq 0$ for some $j$, then $J$-holomorphic orbifold discs can appear in  the moduli space $\cM_{k+1,l}(L,J,\beta,\bfx)$,
and hence contribute to the deformed filtered $A_{\infty}$ structure.

More explicitly, we define $\cM_{k+1,l, \bfx}(L;\beta; \ul{Q},\ul{P})$ to be the fiber product 
between $\cM_{k+1,l}(L,J,\beta,\bfx)$ and $\ul{Q}$, $\ul{P}$ under the evaluation maps
$\ev_i$ for $i=1,\dots,k$ and $\ev_j^+$ for $j=1,\dots,l$. Then we define (cf. \eqref{eq:qbetalk}, \eqref{eq:qlk0} and \eqref{eq:qlk}, see also \cite[Equation (12.22)]{Cho-Poddar})
\begin{align}
q'_{\beta,l,k,\bfx}(\ul{Q};\ul{P})&=\frac{1}{l !} (\cM_{k+1,l, \bfx}(L;\beta; \ul{Q},\ul{P}), \ev_0) \label{eq:qAgain}\\
q_{\beta,l,k,\bfx}(Q^{\otimes l};\ul{P})&=q'_{\beta,l,k,\bfx}(Q^{\otimes l};\ul{P}) \\
 q_{l,k}&=\sum_\beta \sum_{\bfx} q_{\beta,l,k,\bfx} \otimes T^{\omega_X(\beta)}.
\end{align}
The discussion of the deformed filtered $A_{\infty}$ structure in the previous section carries over with this new definition of $q_{l,k}$.

In this paper, the only cycle in $X_g$ for $g \neq 0$ that we will consider is the fundamental cycle $[X_g]$.
Let $H$ be the $\CC$-vector space generated by $H_*(X;\CC)$ and $[X_g]$ for all $g \neq 0$.
Let $H \otimes \Lambda_+$ denote the completion of the tensor product with respect to the $\R$-filtration.
For $\bfb \in H \otimes \Lambda_+$ and $b \in H^1(L;\Lambda_0)$, we have a filtered $A_{\infty}$ algebra
structure $\{m^{\bfb,b}_k\}_{k=1}^\infty$ on 
$H(L,\Lambda_0)$.
Weak bounding cochains for $m^\bfb$ and the bulk-deformed superpotential are defined as in Definition \ref{d:superpotential}.
Most importantly, the exact analogue of Theorem \ref{t:superpotential} holds in the orbifold setting \cite[Theorem 11.4 and 12.10]{Cho-Poddar}.

\begin{thm}\label{t:superpotential2}
 Suppose that $L\subset X^{reg}$ is a Lagrangian torus and $H^1(L,\Lambda_0)/H^1(L,2\pi \sqrt{-1} \ZZ) \subset \hat{\cM}_{\weak}(L,m^\bfb)$.
 If $b \in H^1(L,\Lambda_0)$ is a critical point of the potential function 
 \begin{align}
  W^{\bfb}: H^1(L,\Lambda_0)/H^1(L,2\pi \sqrt{-1} \ZZ) \simeq (\Lambda_0 \setminus \Lambda_+)^n \to \Lambda_+ \label{eq:superpotential2}
 \end{align}
 then $m_1^{\bfb,b}=0$, the 
 $(\bfb,b)$-deformed Floer cohomology equals $H(L,\Lambda_0)$, and  $L$ is Hamiltonian non-displaceable.
\end{thm}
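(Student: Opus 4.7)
The plan is to adapt the manifold proof of Theorem \ref{t:superpotential} to the orbifold setting. The goal is to show that if $b$ is a critical point of $W^{\bfb}$, then $m_1^{\bfb,b}\equiv 0$ on all of $H(L,\Lambda_0)$; once this holds, the $(\bfb,b)$-deformed Floer cohomology is isomorphic to $H(L,\Lambda_0)\neq 0$, and non-displaceability follows from Hamiltonian invariance of Floer cohomology.

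First I would verify that the divisor axioms \eqref{eq:D1}--\eqref{eq:D2} continue to hold in the orbifold setting for the refined structure maps $q_{\beta,l,k,\bfx}$ from \eqref{eq:qAgain}. This requires cyclically symmetric Kuranishi perturbations on $\cM_{k+1,l}(L,J,\beta,\bfx)$, in both the boundary marked points and the interior marked points of a fixed type $\bfx$. Granted these axioms, the derivation of the analogue of \eqref{eq:Ww4} proceeds verbatim, now summing over both $\beta$ and types $\bfx$; the only classes that contribute are those with Maslov index $2$ and $m_{0,\beta,\bfx}(1)=e_L$.

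Second, I would run the same divisor-axiom calculation with one extra boundary insertion $X_i^\vee \in H^1(L,\Lambda_0)$ in the products $m_{k+1,\beta}^{\bfb,b_0}(b_+,\dots, X_i^\vee, \dots, b_+)$. The combinatorics produces
\[
 m_1^{\bfb,b}(X_i^\vee) \;=\; \Bigl(Y_i\,\frac{\partial W^{\bfb}}{\partial Y_i}(b)\Bigr)\, e_L,
\]
where $Y_i=\exp(b(X_i))$ for a basis $\{X_i\}\subset H_1(L;\ZZ)$ and $\{X_i^\vee\}$ is the dual basis in $H^1(L;\CC)$. At a critical point of $W^{\bfb}$ the right-hand side vanishes, so $m_1^{\bfb,b}$ vanishes on $H^1(L,\Lambda_0)$. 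To propagate the vanishing to all cohomological degrees, I would use the Leibniz-type relation between $m_1^{\bfb,b}$ and $m_2^{\bfb,b}$ coming from the $A_{\infty}$-relations, together with the fact that the torus cohomology ring is generated by $H^1(L)$; this forces $m_1^{\bfb,b}=0$ on the whole of $H(L,\Lambda_0)$.

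The hard part will be the Kuranishi-theoretic input required in step one: the full boundary divisor axiom (rather than the weaker Charest--Woodward version noted in Remark \ref{r:CW}) in the orbifold setting, including the correct handling of the local group homomorphisms $G_{z_j^+}\to G_{u(z_j^+)}$ at interior orbifold marked points and the injectivity condition in Definition \ref{d:Jmap}(3). Once this technical input is accepted, together with the independently non-trivial construction of a canonical model on $H(L,\Lambda_0)$ compatible with these perturbations, the remainder of the argument is purely formal $A_{\infty}$-algebra.
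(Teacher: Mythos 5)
Your outline is essentially the argument this paper relies on: the paper does not prove Theorem \ref{t:superpotential2} internally, but quotes it from \cite[Theorems 11.4 and 12.10]{Cho-Poddar}, remarking that the extension from toric-invariant bulk classes to general $\bfb$ needs only the (weak) boundary divisor axiom, formal properties of filtered $A_{\infty}$-algebras, and the fact that torus cohomology is generated in degree one --- precisely your ingredients (the derivative formula $m_1^{\bfb,b}(X_i^\vee)=\bigl(Y_i\,\partial_{Y_i}W^{\bfb}(b)\bigr)e_L$, then propagation of the vanishing via the Leibniz-type relation, where the $m_0$-correction terms are killed by unitality since $m_0^{\bfb,b}$ is a multiple of $e_L$, and degree-one generation). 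The Kuranishi-theoretic input you flag as the hard part is exactly what the paper delegates to \cite{Cho-Poddar} (and \cite{FO3-S2xS2}), and it explicitly invites the reader to take the statement as an axiom.
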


The conclusion implies in particular that $L$ cannot be displaced by Hamiltonian isotopies in $X^{reg}$.

\begin{remark}
As in \cite{FO3-2}, the paper \cite{Cho-Poddar} restricts to bulk deformations by toric-invariant cycles in a toric orbifold $X$.  However, as in Remark \ref{r:toric}, their formalism applies to any bulk class provided \eqref{eq:superpotential2} and the (weak) boundary divisor axiom holds (the proof of this fact in \cite{FO3-S2xS2} uses only formal properties of filtered $A_{\infty}$-algebra and the fact that the cohomology of the torus is generated by degree one classes). 
 The main advantage of using toric-invariant bulks and a toric-invariant Lagrangian $L$, both in the usual and the orbifold case, 
 is that the inclusion $H^1(L,\Lambda_0)/H^1(L,2\pi \sqrt{-1} \ZZ) \subset \hat{\cM}_{\weak}(L,m^\bfb)$
 then holds automatically, once one has built a $T^n$-equivariant Kuranishi structure on the moduli space of discs.
 In contrast, we will give a direct proof of the inclusion $H^1(L,\Lambda_0)/H^1(L,2\pi \sqrt{-1} \ZZ) \subset \hat{\cM}_{\weak}(L,m^\bfb)$: see Lemma \ref{l:H1embed} and \ref{l:Maslov0}.

Another advantage of the toric setup is that it is easier to prove the boundary divisor axiom (see \cite[Lemma 11.8]{FOOOtoric}).
\end{remark}

\begin{remark}
The Charest-Woodward framework, establishing transversality via stabilizing divisors, has not yet been developed for symplectic orbifolds.
\end{remark}

In practice, the strategy to compute the superpotential function $W^{\bfb}$ goes as follows (see \cite[Section 12.3 and Proposition 13.1]{Cho-Poddar}). Let $\bfb=\bfb_{\smooth}+\bfb_{\orb}$, where 
$\bfb_{\smooth} \in H(X,\Lambda_+)$ represents a codimension $2$ cycle in the untwisted sector $X=X_0$ 
and $\bfb_{\orb}=\sum_{g \neq 0} a_g[X_g]$ for some $a_g \in \Lambda_+$.
By \eqref{eq:familiarW}, we can compute $W^{\bfb_{\smooth}}$ by computing $m_{0,\beta}(1)$ (i.e. the algebraic count of smooth Maslov index $2$ discs in class $\beta$) for all $\beta$.
Then we need to understand how $W^{\bfb_{\smooth}}$ changes when we consider 
$\bfb'=\bfb_{\smooth}+c[X_{g_0}]$ for some $c \in \Lambda_+$ and some twisted sector $g_0 \neq 0$.
In other words, we need to understand the contribution of $J$-holomorphic orbifold discs to  $W^{\bfb}-W^{\bfb_{\smooth}}$.

In the presence of twisted sectors, the cyclic symmetry for interior marked points breaks down: there is no divisor axiom for $\cM_{k+1,l}(L,J,\beta,\bfx)$ which allows one to reduce the number of interior orbifold points (i.e. we don't have the analogue of \eqref{eq:D2}). However, the cyclic symmetry of the boundary marked points survives and we still 
have the following analogue of \eqref{eq:D1} (see \cite[Lemma 12.7 and Definition 12.5]{Cho-Poddar}:
their $\frac{c(\beta, \mathbf{p})}{l!}PD([L])$ correspond to our $q_{l,0,\beta,\bfx}(\bfb^{\otimes l},1)$, their $r$ corresponds to our $b_+$):
\begin{align}
q_{l,k,\beta,\bfx}(\bfb^{\otimes l},b_+^{\otimes k})&=\frac{(b_+(\partial \beta))^k}{k !} q_{l,0,\beta,\bfx}(\bfb^{\otimes l},1).
\end{align}

For example, suppose that $l=2$, $\bfx(j)=g_0$ for both $j=1,2$, and 
the virtual fundamental chain satisfies
\begin{align}
(\cM_{1,2, \bfx}(L;\beta; [X_{g_0}]^{\otimes 2},1), \ev_0)=[L].
\end{align}
Then the contribution of the curves in $\cM_{1,2, \bfx}(L;\beta; [X_{g_0}]^{\otimes 2},1)$ to 
$q_{2,k,\beta,\bfx}((c[X_{g_0}])^{\otimes 2},b_+^{\otimes k})$ is
\begin{align}
\frac{(b_+(\partial \beta))^k}{k !} q_{2,0,\beta,\bfx}((c[X_{g_0}])^{\otimes 2},1)=\frac{(b_+(\partial \beta))^k}{k !} \frac{c^2}{2}e_L \label{eq:csquared1}.
\end{align}
Recall from \eqref{eq:qAgain} that there is a $\frac{1}{l!}$involved in the definition of $q_{l,k,\beta,\bfx}$, which accounts for the $2$ in the denominator in the equation above.
By summing over $k$ and taking into account the area of $\beta$, the total contribution of these curves to $W^{\bfb_{\smooth}+c[X_{g_0}]}(b)$ is
\begin{align}
T^{\omega_X(\beta)} \exp(b(\partial \beta)) \frac{c^2}{2}=T^{\omega_X(\beta)} \frac{c^2}{2}  \prod_{i=1}^n Y_i^{a_i} \label{eq:csquared2}
\end{align}
where $\partial \beta=\sum_{i=1}^n a_iX_i$ and $Y_i=\exp(b(X_i))$ as in \eqref{eq:familiarW}.

Note that the meaning of `total contribution' to $W^{\bfb_{\smooth}+c[X_{g_0}]}(b)$
means the total contribution \emph{via the terms} $q_{2,k,\beta,\bfx}((\bfb_{\smooth}+c[X_{g_0}])^{\otimes 2},b_+^{\otimes k})$ for all $k$.
It is possible (and likely) that the same underlying holomorphic curves can contribute to
$q_{l,k,\beta,\bfx}((\bfb_{\smooth}+c[X_{g_0}])^{\otimes l},b_+^{\otimes k})$ with $l>2$, 
for those $\bfx$ such that precisely two $\bfx(j)$ equal $g_0$ and the other $\bfx(j)$ equal the untwisted sector $g=0$.
Fortunately, these terms will be of higher order in $T$ (because $\bfb_{\smooth}+c[X_{g_0}] \in H(X,\Lambda_+)$) and we will not have to compute them in practice.

One can add more terms from twisted sectors to $\bfb_{\smooth}$ and the (partial) calculation of the corresponding $W^{\bfb}$ will follow the same lines as above.

%Therefore, 

%Let $u$ be a $J$-holomorphic obifold disc in $\cM_{k+1,l}(L,J,\beta,\bfx)$
%contributing to $q_{l,k,\beta}(\bfb^{\otimes l}, b^{\otimes} k)$

%The difference $W^{\bfb'}-W^{\bfb_{\smooth}}$ will involve the contribution to $W^{\bfb'}$ that are not in $W^{\bfb_{\smooth}}$.
%In other words, it will invlove orbifold discs such that at least one of the interior point insertion is  
%$c[X_g]$. More precisely, let $u$ be a $J$-holomorphic orbifold disc contributing to $W^{\bfb}$.
%Suppose that $u$ has $h$ interior marked points at which the twisted sector of the evaluation map
%is $g_0$.

\section{Classification of holomorphic orbifold discs}

The two-sphere $S^2_\alpha$ has a standard Lagrangian torus fibration arising from a Hamiltonian circle action with moment map image an interval of length $\alpha$. 
Let $M$ be the symplectic manifold in Theorem \ref{t:nondisLag}.
There is a corresponding Hamiltonian $T^2$-action on $M$ with moment map image a rectangle of side lengths $2B+C$ and $2a$; the Lagrangians $\eL_i \subset M$ of Theorem \ref{t:nondisLag} can be taken to be fibres of the Lagrangian fibration.  

We now take $X=\eX = \Sym^2(M)$, and  take $L$ to be the (symmetric) product of two distinct Lagrangian torus fibers $\eL'$ and $\eL''$ in $M$. 
In particular, we have $L \subset X^{\reg}$. The indexing set $T'$ is a singleton; we denote the unique element in $T'$ by $1$,
so that $I\eX=\eX_0 \cup \eX_1$, where $\eX_0=\eX$ and $\eX_1=M$ is the diagonal in $M \times M$. 
This section is devoted to the discussion of $J$-holomorphic stable orbifold discs mapping to $(X,L)$.

\subsection{Tautological correspondence}

We endow each $S^2$ factor of $M$ with its unique complex structure.
Let  $J_{M}$ be the induced product complex structure on $M$.
%so that $M$ with the complex structure biholomorphic to $\mathbb{P}^1 \times \mathbb{P}^1$
The product complex structure on $M \times M$ is invariant under the $\mathbb{Z}_2$ action
so it decends to a complex structure $J_{\eX}$ on $\eX$.
There is a well-known bijective correspondence (the `tautological' correspondence) between isomorphism classes of
 $J_{\eX}$-holomorphic maps $u:S \to \eX$ and isomorphism classes of pairs $(v,\pi_\Sigma)$, where
$\pi_{\Sigma}: \Sigma \to S$ is a $2$ to $1$ branched covering and 
$v:\Sigma \to M$ is a $J_{M}$-holomorphic map ($\Sigma$ is possibly disconnected).
Bijective correspondences of this form have been used in
 \cite{Donaldson-Smith, Smith03, OS04, Usher04, Lipshitz-cylindrical, Costello, Auroux-bordered, Mak-Smith} etc.
In \cite{Donaldson-Smith, Smith03, Usher04, Costello}, holomorphic maps from closed curves are considered, either to (families of) symmetric products of Riemann surfaces or, in \cite{Costello}, to symmetric product stacks of arbitrary-dimension smooth projective varieties. 
In \cite{OS04, Lipshitz-cylindrical, Auroux-bordered, Mak-Smith}, 
 holomorphic maps from Riemann surfaces with boundary are considered. The papers \cite{OS04, Lipshitz-cylindrical, Auroux-bordered} consider maps to the symmetric product of a Riemann surface whilst \cite{Mak-Smith}
considers maps to the symmetric product (or Hilbert scheme) of a complex surface. (However, none of the latter references endow the domain discs with an orbifold structure, or consider orbifold Floer cohomology.) 

The correspondence is defined as follows.
Given a  $J_{\eX}$-holomorphic map $u:S \to \eX$, 
we define $\Sigma$ to be the fiber product between $u$ and the quotient $M \times M \to \eX$,
so that we have the pull-back diagram
\[
\xymatrix{
\Sigma \ar[rr]^{V} \ar[d]_{\pi_{\Sigma}} & & M \times M  \ar[d]\\
S \ar[rr]_{u} &&  \eX.
}
\]
We define $v:=\pi_1 \circ V$, where $\pi_1:M \times M  \to M$ is the projection to the first factor.
Conversely, given a pair $(v,\pi_\Sigma)$, the corresponding $u$
is defined by
$u(z)=v(\pi_\Sigma^{-1}(z)) \in \eX$.

We first note that the areas of $u$ and $v$ agree: 
\begin{lemma}\label{l:Area}
Let $\omega_M$ be a symplectic form on $M$.
Let $\omega_{\eX}$ be the orbifold symplectic form on $\eX$ whose pullback to $M \times M$ is $\omega_M \oplus \omega_M$.
Then $\omega_M(v_*[\Sigma])= \omega_{\eX}(u_*[S])$, where $u$ and $v$ are related by the tautological correspondence.
\end{lemma}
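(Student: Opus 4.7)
The plan is to express both areas as integrals of differential forms and relate them via the pullback square defining the tautological correspondence, using that $\pi_{\Sigma}$ has degree $2$.

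First I would record the basic identities coming from the pullback diagram. By hypothesis, the quotient map $q : M \times M \to \eX$ satisfies $q^* \omega_{\eX} = \omega_M \oplus \omega_M$ on the smooth locus (orbifold points of $\eX$ are the diagonal image, which has measure zero, so working with the smooth form on the regular locus suffices for area computations). Commutativity of the pullback square gives
\begin{align*}
  \pi_{\Sigma}^* u^* \omega_{\eX} \;=\; V^*(\omega_M \oplus \omega_M) \;=\; (\pi_1 \circ V)^* \omega_M + (\pi_2 \circ V)^* \omega_M \;=\; v^* \omega_M + (\pi_2 \circ V)^* \omega_M.
\end{align*}

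Next I would exploit the $\ZZ/2$-symmetry. Let $\sigma : M \times M \to M \times M$ be the factor swap and let $\tau : \Sigma \to \Sigma$ be the deck involution of the double cover $\pi_{\Sigma}$. Since $\Sigma$ is the fibre product, $V \circ \tau = \sigma \circ V$, which gives $\pi_2 \circ V = \pi_1 \circ \sigma \circ V = \pi_1 \circ V \circ \tau = v \circ \tau$. Therefore $(\pi_2 \circ V)^* \omega_M = \tau^* v^* \omega_M$, and integrating over $\Sigma$,
\begin{align*}
  \int_{\Sigma} \pi_{\Sigma}^* u^* \omega_{\eX} \;=\; \int_{\Sigma} v^* \omega_M + \int_{\Sigma} \tau^* v^* \omega_M \;=\; 2 \int_{\Sigma} v^* \omega_M,
\end{align*}
where the last equality uses that $\tau$ is an orientation-preserving diffeomorphism of $\Sigma$ (its fixed points lie over the branch locus and form a set of measure zero, so the change of variables is valid). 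Finally, since $\pi_{\Sigma} : \Sigma \to S$ is a branched double cover,
\begin{align*}
  \int_{\Sigma} \pi_{\Sigma}^* u^* \omega_{\eX} \;=\; 2 \int_S u^* \omega_{\eX}.
\end{align*}
Comparing the two expressions and cancelling the factor of $2$ yields $\omega_{\eX}(u_*[S]) = \omega_M(v_*[\Sigma])$.

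I do not expect any serious obstacle: the only things to be careful about are that $\omega_{\eX}$ is an orbifold form (handled because we only integrate on the smooth locus, which has full measure), that $\Sigma$ may be disconnected or have boundary (both are irrelevant for the pointwise identity of $2$-forms and for the degree computation), and that $\pi_{\Sigma}$ is only a \emph{branched} cover (the branch set is finite and contributes nothing to the integral). The essential content is the pullback-square commutativity combined with the deck-transformation symmetry.
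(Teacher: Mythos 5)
Your argument is correct and coincides with the paper's own proof: both reduce the claim to the two identities $\int_{\Sigma} V^*(\omega_M\oplus\omega_M) = 2\int_{\Sigma} v^*\omega_M$ (via the deck involution) and $\int_{\Sigma} V^*(\omega_M\oplus\omega_M) = 2\int_S u^*\omega_{\eX}$ (via the degree of the branched double cover / the definition of orbifold integration). You have simply spelled out the steps the paper labels as ``straightforward to check''.
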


\begin{proof}
It is straightforward to check that $(\omega_M \oplus \omega_M)(V_*[\Sigma])=2\omega_M(v_*[\Sigma])$, where $V:\Sigma \to M \times M$ is the $\mathbb{Z}_2$-equivariant map in the discussion above.
On the other hand, following directly from the definition of integration of an orbifold form, we also have 
\begin{align}
(\omega_M \oplus \omega_M)(V_*[\Sigma])=\int_{\Sigma} V^*  (\omega_M \oplus \omega_M)
=2 \int_S u^* \omega_{\eX}
=2\omega_{\eX}(u_*[S])
\end{align}
so the result follows.
\end{proof}

The following topological fact will be helpful when comparing the Fredholm theories for $u$ and $v$.

\begin{lemma}\label{l:bCondition}
 If $(v,\pi_\Sigma)$ is obtained tautologically from 
 a $J_{\eX}$-holomorphic map $u:S \to \eX$ with boundary on $L$,
 then 
 \begin{align}
 &\text{$\Sigma$ has $1$ or $2$ connected components. If $\Sigma$ has $2$ components, then each component is a disc.}\label{eq:bCondition}\\
 &Moreover, \text{$\partial \Sigma$ has $2$ connected components, mapped under $v$ to $\eL'$ and $\eL''$ respectively.} \nonumber
 %\label{eq:bCondition} \\
 %&\text{moreover, $v(\partial_0 \Sigma) \subset \eL'$ and $v(\partial_1 \Sigma) \subset\eL''$} \nonumber
 \end{align}
\end{lemma}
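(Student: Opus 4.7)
The plan is to exploit the tautological pullback square. Since $\pi_\Sigma : \Sigma \to S$ is a degree $2$ branched covering of a disc, $\Sigma$ has at most $2$ connected components. If there are exactly $2$ components, they must be unramified covers whose degrees sum to $2$, hence each is a degree $1$ cover, and so each is biholomorphic to $S$ and in particular is a disc. This handles the first sentence of \eqref{eq:bCondition} with essentially no input from $u$.

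Next I would analyze $\partial\Sigma$ using that $\eL'$ and $\eL''$ are disjoint. The branch locus of $\pi_\Sigma$ agrees with $u^{-1}(\eX_1)$, the preimage of the diagonal stratum. For any $z \in \partial S$ one has $u(z) = \{p',p''\}$ with $p' \in \eL'$, $p'' \in \eL''$ and, since $\eL' \cap \eL'' = \emptyset$, necessarily $p' \neq p''$. Therefore $\partial S$ is disjoint from the branch locus, and $\pi_\Sigma \colon \partial\Sigma \to \partial S$ is a genuine (unbranched) $2$-to-$1$ cover of the circle $\partial S$. Thus $\partial\Sigma$ has either $1$ or $2$ connected components.

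The key step is to rule out the case where $\partial\Sigma$ is connected. From the pullback description, the fibre $V^{-1}(u(z)) = \{(p',p''),(p'',p')\}$ is exchanged by the $\mathbb{Z}/2$-action, and $v = \pi_1 \circ V$ sends the two preimages of $z$ to $p' \in \eL'$ and $p'' \in \eL''$ respectively. Hence $v(\partial\Sigma)$ always meets both $\eL'$ and $\eL''$. If $\partial\Sigma$ were a single circle, its image $v(\partial\Sigma)$ would be a connected subset of the disconnected set $\eL' \sqcup \eL''$, a contradiction. Therefore $\partial\Sigma$ has exactly two components. Each component is connected and maps into the disconnected Lagrangian $\eL' \sqcup \eL''$, so each lies entirely in one of $\eL'$ or $\eL''$; the preceding observation that $v$ hits both shows that the two components map to $\eL'$ and $\eL''$ respectively.

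The main obstacle is really just the connectedness argument in the last paragraph; everything else is a routine consequence of the degree $2$ cover structure and the pullback square defining $(v,\pi_\Sigma)$.
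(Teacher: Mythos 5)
Your proposal is correct and follows essentially the same route as the paper: the paper phrases the key step via the $\mathbb{Z}/2$-equivariant lift $V$, whose boundary image lies in the disjoint union $\eL'\times\eL''\;\sqcup\;\eL''\times\eL'$ and must meet both pieces, whereas you phrase it via the two points of each boundary fibre of $\pi_\Sigma$ mapping under $v$ to $\eL'$ and $\eL''$ — the same disjointness-plus-connectedness argument. (Only a cosmetic caveat: the branch locus of $\pi_\Sigma$ is contained in, rather than equal to, $u^{-1}(\eX_1)$, but containment is all your argument uses.)
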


\begin{proof}
  Since $\pi_{\Sigma}$ is a $2$ to $1$ branched covering, $\Sigma$ has either $1$ or $2$ connected components and $\partial \Sigma$ has either $1$ or $2$ connected components.
  Moreover, if $\Sigma$ has $2$ components, then $\pi_{\Sigma}$ is an unbranched covering so each component of $\Sigma$ is a disc.
  
  Since $u(\partial S) \subset L$, we have $V(\partial \Sigma) \subset \eL' \times \eL'' \cup \eL'' \times \eL'$.
  We know that $\eL' \times \eL''$ is disjoint from $\eL'' \times \eL'$ because $\eL' \cap \eL''=\emptyset$.
 Since $V$ is a $\mathbb{Z}_2$-equivariant map, $V(\partial \Sigma)$ must  intersect
neither or both of $\eL' \times \eL'' $ and $ \eL'' \times \eL'$; the former case is excluded because $\partial \Sigma$ is non-empty.
As a result, $\partial \Sigma$ cannot be connected so it has $2$ connected components.
 Finally, since $V$ is  a $\mathbb{Z}_2$-equivariant map, we know that the $2$ boundary components of $\Sigma$ are mapped under $V$ to $\eL' \times \eL''$ and  $\eL'' \times \eL'$, respectively.
Therefore,  the $2$ boundary components of $\Sigma$ are mapped under $v$ to $\eL'$ and $\eL''$, respectively.
\end{proof}

To compare the Fredholm theories between $u$ and $(v,\pi_{\Sigma})$, we first explain why the moduli of $S$ (i.e. $\cM_{k+1,l,\ul{m}}$) is canonically isomorphic
to a certain moduli space of double branched coverings $\pi_{\Sigma}: \Sigma \to S$.%Then we will explain that how the tautological correspondence above identify the Fredholm theory between $u$ and $v$.

If $S$ is a smooth disc (i.e. it is irreducible and has no orbifold point)
and $u$ is a $J_{\eX}$-holomorphic stable map in the sense of Definition \ref{d:Jmap}, then the condition that $u$ is a smooth orbifold map implies that it admits a lift $\tilde{u}: S \to M \times M$ (because a smooth orbifold map is a map that admits a smooth lift to uniformization charts, and hence a smooth lift to orbifold universal covers).
That in turn implies that $\Sigma=S \sqcup S$, $\pi_{\Sigma}:\Sigma \to S$ is the trivial 2-fold covering and
$V=\tilde{u} \sqcup (\iota \circ \tilde{u}): \Sigma \to M \times M$, where $\iota:M \times M \to M \times M$ is the involution swapping the two factors.
Thus, 
when $S$ is an orbifold disc and $u$ is a $J_{\eX}$-holomorphic stable map in the sense of Definition \ref{d:Jmap}, the 
critical values of $\pi_\Sigma$ are precisely the orbifold points of $S$.
Moreover, the corresponding $m_j$ at each orbifold point is necessarily $2$, and the images of the evaluation maps $\ev_j^+$
at these orbifold points necessarily lie in $\eX_1 \subset I\eX$ (see the last sentence of Remark \ref{r:conjugacyclass}).
The crucial point here is that the evaluation of an interior marked point $z^+_j$ of $S$ lies in $\eX_0$
if and only if $m_j=1$, and it lies in $\eX_1$ if and only if $m_j=2$.
The former case occurs if and only if $z^+_j$ is not a critical value of $\pi_{\Sigma}$
and the latter case occurs if and only if $z^+_j$ is a critical value of $\pi_{\Sigma}$. 
In short, the type $\bfx$ and $\ul{m}$ determine each other as follows: 
\begin{equation} \label{eqn:determines}
m_j= \begin{cases} 2 &  \mathrm{if} \ \bfx(j)=1 \\ 1 & \mathrm{otherwise}. \end{cases}
\end{equation}
%With this understood, we can now explain how to compute the virtual dimension of $\cM_{k+1,l}(L,J,\beta,\bfx)$
%by computing the virtual dimension of the pairs $(v,\pi_{\Sigma})$.

For this fixed $\ul{m}$, and for each element $(S,\ul{z},\ul{z}^+,\ul{m})$ in $\cM_{k+1,l,\ul{m}}$, there is a unique 
marked Riemann surface $\Sigma$ (up to biholomorphism) with a 
double branched covering $\pi_{\Sigma}: \Sigma \to S$ for which the critical values of $\pi_{\Sigma}$
are exactly $\{z_j^+| m_j=2\}$ and the marked points of $\Sigma$ are precisely $\pi_{\Sigma}^{-1}(\{z_j^+| m_j=1\})$.
Moreover, if $\pi_{\Sigma'}: \Sigma' \to S$ is another double branched covering
such that  the critical values are exactly $\{z_j^+| m_j=2\}$ and the marked points of $\Sigma'$ are precisely $\pi_{\Sigma'}^{-1}(\{z_j^+| m_j=1\})$, then 
there are precisely two biholomorphisms $f_1,f_2: \Sigma \to \Sigma'$
such that $\pi_{\Sigma'} \circ f_i=\pi_{\Sigma}$. 
The biholomorphisms $f_i$ are related by the involution on $\Sigma$ induced from the double branched covering. 
This is true even if $l=0$ and $\pi_{\Sigma}$ is an unbranched covering (i.e. the requirement that $\pi_{\Sigma'} \circ f_i=\pi_{\Sigma}$ distinguishes $f_i$ from all other biholomorphisms between disjoint unions of two discs).

We denote the involution on $\Sigma$ by $\iota_{\Sigma}$ so that $f_2=f_1 \circ \iota_{\Sigma}$.
Notice that, in the tautological correspondence, 
$(v, \pi_{\Sigma})$ and $(v, \pi_{\Sigma} \circ \iota_{\Sigma})$ are in the same equivalence class in the sense that
they correspond to the same map $u(z)=v(\pi_\Sigma^{-1}(z))=v((\pi_\Sigma \circ \iota_{\Sigma})^{-1}(z))$.
Therefore, we define an isomorphism from $(\Sigma, \pi_{\Sigma})$ to $(\Sigma', \pi_{\Sigma'})$ 
to be an \emph{unordered} tuple $(f_1,f_2:=f_1 \circ \iota_{\Sigma})$ such that $f_i: \Sigma \to \Sigma'$ 
are biholomorphisms satisfying $\pi_{\Sigma'} \circ f_i=\pi_{\Sigma}$.
We then define the moduli space of covers $\cM_{k+1,l,\ul{m}}^{cover}$ as the moduli space of pairs 
$(\Sigma, \pi_{\Sigma})$ up to isomorphism such that 
$\Sigma$ is a marked Riemann surface and 
$\pi_{\Sigma}$ is a complex analytic double branched covering from $\Sigma$
to $S$, for some $S \in \cM_{k+1,l,\ul{m}}$, such that $\pi_{\Sigma}^{-1}(\{z_j^+| m_j=1\})$ is precisely the set of marked points of $\Sigma$.
By the discussion above, we conclude that the moduli of covers $\cM_{k+1,l,\ul{m}}^{cover}$
is canonically isomorphic to $\cM_{k+1,l,\ul{m}}$, which has dimension 
\begin{align}
\dim(\cM_{k+1,l,\ul{m}}^{cover})=\dim(\cM_{k+1,l,\ul{m}})=k+1+2l-3. \label{eq:DimDomain}
\end{align}

\begin{remark}\label{r:moduliexplain}
When the right hand side of \eqref{eq:DimDomain} is negative, it should be interpreted as the dimension of the moduli space 
minus the dimension of the generic automorphism group.
The bijective correspondence between $\cM_{k+1,l,\ul{m}}^{cover}$ and $\cM_{k+1,l,\ul{m}}$ is compatible with automorphism groups in the following sense.
Let $S \in \cM_{k+1,l,\ul{m}}$ be a Riemann surface with non-trivial automorphism group (this occurs when $l=0$ and $k \in \{0,1\}$).
For each biholomorphism $g:S \to S$, there are precisely two
biholomorphisms $f_1,f_2: \Sigma \to \Sigma$ such that $\pi_{\Sigma} \circ f_i=g \circ \pi_{\Sigma}$.
Moreover, we have $f_2= f_1 \circ \iota_{\Sigma}$.
Therefore $g$ uniquely determines an isomorphism $(f_1,f_2)$ from $(\Sigma,\pi_{\Sigma})$ to itself.
Conversely, an isomorphism from $(\Sigma,\pi_{\Sigma})$ to itself uniquely determines an automorphism of $S$.
Therefore, there is a canonical bijective correspondence between the automorphism group of
 $(\Sigma,\pi_{\Sigma})$ and the automorphism group of $S$.
\end{remark}

\begin{remark}
If $(\Sigma, \pi_{\Sigma}), (\Sigma', \pi_{\Sigma'}) \in \cM_{k+1,l,\ul{m}}^{cover}$, then $\Sigma$ is homeomorphic to
$\Sigma'$.
From the discussion above, $(\Sigma, \pi_{\Sigma})$ is isomorphic to $(\Sigma', \pi_{\Sigma'})$
if and only if $\Sigma$ is biholomorphic to $\Sigma'$ as marked Riemann surfaces.
Therefore, $\cM_{k+1,l,\ul{m}}^{cover}$ is canonically a subspace of the moduli space of complex structures on the underlying marked topological surface of $\Sigma$ (if $\Sigma$ is disconnected, the subspace lies inside the locus in which the complex structures on the two connected components agree). It is usually a proper subspace because most Riemann surfaces do not have a biholomorphic involution. However, as we will see and use, every annulus has a biholomorphic involution, which gives us
an isomorphism from the moduli space of  annuli to the moduli space of discs with two interior marked points (see Remark \ref{r:cylLift}).
\end{remark}

Let $S \in \cM_{k+1,l,\ul{m}}$ and $(\Sigma, \pi_{\Sigma}) \in \cM_{k+1,l,\ul{m}}^{cover}$ be as before.
%Due to the Lagrangian boundary condition, $u^{-1}(\eX^{\reg})$ is connected and d  Lemma \ref{l:GoodMaps}
The space of smooth orbifold maps
 \[
 C^{\infty}((S,\partial S), (\eX,L))
 \]
can be identified with the space of $\mathbb{Z}_2$-equivariant smooth maps
\[
C^{\infty}((\Sigma, \partial \Sigma), (M \times M, \cL' \times \cL'' \cup \cL'' \times \cL'))
\] 
modulo the relation $f \sim f \circ \iota_{\Sigma}$. 
Indeed, we can take $(\Sigma, \mathbb{Z}_2, \pi_{\Sigma})$ as a uniformizing chart for $S$
and $(M \times M, \mathbb{Z}_2, \pi_{M\times M})$ as a uniformizing chart for $\eX$, where $\pi_{M\times M}: M\times M \to \eX$ is the quotient map.
By projecting to the first factor of $M \times M$ (as in the tautological correspondence), the space of $\mathbb{Z}_2$-equivariant smooth maps
$C^{\infty}((\Sigma, \partial \Sigma), (M \times M, \cL' \times \cL'' \cup \cL'' \times \cL'))$
modulo the relation $f \sim f \circ \iota_{\Sigma}$
is isomorphic to $C^{\infty}((\Sigma, \partial \Sigma), (M,\cL' \sqcup \cL''))$ modulo the relation $f \sim f \circ \iota_{\Sigma}$.
Notice that the maps $f$ and $f \circ \iota_{\Sigma}$
lie in different connected components of $C^{\infty}((\Sigma, \partial \Sigma), (M,\cL' \sqcup \cL''))$ (because if $f(\partial^1 \Sigma) \subset \cL'$, then $f \circ \iota_{\Sigma}(\partial^1 \Sigma) \subset \cL''$, where $\partial^1 \Sigma$ is a connected component of $\partial \Sigma$).
Therefore, the involution $f \mapsto f \circ \iota_{\Sigma}$ is free.

We can use the same reasoning to compare the  Fredholm regularities of  a $J_{\eX}$ holomorphic curve $u$
and the corresponding $J_{M}$ holomorphic curve $v$.
More precisely, for any $J_{\eX}$ holomorphic stable map $u:(S,\partial S) \to (\eX,L)$ (in the sense of Definition \ref{d:Jmap}), 
the function spaces 
\[
W^{1,p}((S,\partial S), (u^*T\eX,u|_{\partial S}^*TL)) \quad \mathrm{and} \quad 
L^{p}((S,\partial S), (u^*T\eX,u|_{\partial S}^*TL)\otimes \Lambda^{0,1})
\]
 are canonically identified with the corresponding function spaces for the pairs $((\Sigma, \partial \Sigma), (M,\cL' \sqcup \cL"))$ modulo the relation $- \sim - \circ \iota_{\Sigma}$.
This identification further identifies the Fredholm sections associated to $(S,u)$ and $(\Sigma,v)$, and hence 
$(S,u)$ is regular if and only if $(\Sigma,v)$ is regular.

We can summarize this discussion as follows.
Let $\cM'(\Sigma, \cL' \sqcup \cL'', J_M, \beta)$ be the moduli space of
$J_M$-holomorphic maps $v: \Sigma \to M$ such that the two boundary components of $\Sigma$
are mapped to different connected components of $\cL' \sqcup \cL''$,
and for which $u(z):=v(\pi_{\Sigma}^{-1}(z))$ is in the homotopy class $\beta$.
There is a free involution on $\cM'(\Sigma, \cL' \sqcup \cL'', J_M, \beta)$, given by $v \mapsto v \circ \iota_{\Sigma}$; 
we denote the quotient by $\cM(\Sigma, \cL' \sqcup \cL'', J_M, \beta)$.
We define $\cM_{k+1,l,\ul{m}}(\cL' \sqcup \cL'', J_M, \beta)$ to be
the moduli space of $(\Sigma,v,\pi_{\Sigma})$ such that
$(\Sigma, \pi_{\Sigma}) \in \cM_{k+1,l,\ul{m}}^{cover}$
and $v \in \cM(\Sigma, \cL' \sqcup \cL'', J_M, \beta)$.
Then: 

%Let $\cM'_{k+1,l,\ul{m}}(\cL' \sqcup \cL", J_M, \beta)$
%be the moduli space of 
%$(\Sigma,v,\pi_{\Sigma})$ such that
%$(\Sigma, \pi_{\Sigma}) \in \cM_{k+1,l,\ul{m}}^{cover}$,
% $v: \Sigma \to M$ is a $J_M$-holomorphic map
% such that the two boundary components of $\Sigma$
%are mapped to different connected components of $\cL' \sqcup \cL"$, and $u(z):=v(\pi_{\Sigma}^{-1}(z))$ is in the homotopy class $\beta$. 
%Let $\cM_{k+1,l,\ul{m}}(\cL' \sqcup \cL", J_M, \beta)$ be the $\mathbb{Z}_2$ free quotient of $\cM'_{k+1,l,\ul{m}}(\cL' \sqcup \cL", J_M, \beta)$ by the involution $v \mapsto v \circ \iota_{\Sigma}$.
%Then we have

\begin{lemma}\label{l:FredReg}
The tautological correspondence defines a canonical isomorphism 
\[
\cM_{k+1,l}(L,J_{\eX},\beta,\bfx) \ \cong \ \cM_{k+1,l,\ul{m}}(\cL' \sqcup \cL'', J_M, \beta).
\]
Moreover, this isomorphism takes regular  elements to regular elements.
%if and only if the corresponding $(\Sigma, v, \pi_{\Sigma})$ is regular. 
\end{lemma}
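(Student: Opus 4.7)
The plan is to assemble the ingredients collected in the preceding discussion into an honest isomorphism of moduli spaces, then upgrade it to a statement about regularity. Most of the work is already done: the tautological correspondence produces $(v, \pi_\Sigma)$ from $u$; the identification $\cM_{k+1,l,\ul{m}}^{cover} \cong \cM_{k+1,l,\ul{m}}$ matches the orbifold structure on $S$ with a compatible double branched cover; and the rule \eqref{eqn:determines} translates the target type $\bfx$ into the branching profile $\ul{m}$ of $\pi_\Sigma$.

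To build the map forward, I would take $((S, \ul{z}, \ul{z}^+, \ul{m}), u, \xi) \in \cM_{k+1,l}(L, J_{\eX}, \beta, \bfx)$, form the fibre product $\Sigma := S \times_{\eX} (M \times M)$ to obtain $\pi_\Sigma : \Sigma \to S$ and $V : \Sigma \to M \times M$, and set $v := \pi_1 \circ V$. By \eqref{eqn:determines} the branch locus of $\pi_\Sigma$ is exactly $\{z_j^+ : m_j = 2\}$, so $(\Sigma, \pi_\Sigma) \in \cM_{k+1,l,\ul{m}}^{cover}$; Lemma \ref{l:bCondition} then ensures that the two boundary components of $\Sigma$ map to the distinct fibres $\cL'$ and $\cL''$, yielding an element of $\cM_{k+1,l,\ul{m}}(\cL' \sqcup \cL'', J_M, \beta)$. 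Conversely, given $(\Sigma, v, \pi_\Sigma)$, I would define $u(z) := v(\pi_\Sigma^{-1}(z)) \in \eX$. Since $\cL' \cap \cL'' = \emptyset$, $u(\partial S) \subset L \subset \eX^{\reg}$, and $u^{-1}(\eX^{\reg})$ is connected and dense; by Lemma \ref{l:GoodMaps}, $u$ is a good smooth orbifold map with a unique isomorphism class of compatible system $\xi$. Remark \ref{r:conjugacyclass} identifies the conjugacy class attached to each $z_j^+$ with $m_j = 2$ as the nontrivial element of the $\mathbb{Z}_2$ isotropy of the diagonal, so the resulting stable map has type $\bfx$. The two constructions are manifestly mutually inverse; the equivalence $v \sim v \circ \iota_\Sigma$ in $\cM(\Sigma, \cL' \sqcup \cL'', J_M, \beta)$ is exactly the freedom of labelling the two sheets of the fibre product.

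For the regularity clause, I would invoke the identification of function spaces recorded just before the lemma: $W^{1,p}$ and $L^p$ sections for the linearised Cauchy--Riemann problem at $u$ correspond, via the uniformizing chart $(\Sigma, \mathbb{Z}_2, \pi_\Sigma)$ and the projection $\pi_1$, to the $\iota_\Sigma$-quotient of the corresponding spaces for $v$ with boundary on $\cL' \sqcup \cL''$. The linearised $\bar\partial$-operators agree on these matched spaces; since $\iota_\Sigma$ acts freely on the equivariant domain, kernels and cokernels of the two linearisations are canonically isomorphic, so $u$ is regular if and only if $v$ is. The chief technical hazard is pedantic bookkeeping of compatible systems and of the map $\bfx \leftrightarrow \ul{m}$, but because $\eX$ has only the one nontrivial isotropy group $\mathbb{Z}_2$ along the diagonal, the uniqueness clause of Lemma \ref{l:GoodMaps} together with \eqref{eqn:determines} pins down all the data unambiguously.
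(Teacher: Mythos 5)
Your proposal is correct and follows essentially the same route as the paper: both rely on the tautological correspondence and the identification $\cM^{cover}_{k+1,l,\ul{m}}\cong\cM_{k+1,l,\ul{m}}$ together with \eqref{eqn:determines} to match the data, with injectivity coming from having quotiented by $\iota_\Sigma$ on both sides and regularity from the identification of the $W^{1,p}$/$L^p$ function spaces and linearised operators. The paper's proof is terser (it defines only the map $(\Sigma,v,\pi_\Sigma)\mapsto u$ and cites the preceding discussion), while you additionally spell out the inverse via Lemma \ref{l:GoodMaps} and Remark \ref{r:conjugacyclass}, which is consistent with how the paper sets things up.
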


%
%Note that, $v$ and $v \circ \iota_{\Sigma}$ lie in different connected components of 
%$\cM'(\Sigma, \cL' \sqcup \cL", J_M, \beta)$ because they have different Lagrangian boundary conditions 
%(i.e. if $v(\partial^1 \Sigma) \subset \cL'$, then $v \circ \iota_{\Sigma}(\partial^1 \Sigma) \subset \cL"$, where $\partial^1 \Sigma$ is a connected component of $\partial \Sigma$).
%In particular, this involution is free and we denote the quotient by $\cM(\Sigma, \cL' \sqcup \cL", J_M, \beta)$.
%We can apply this construction in family (by varying $(\Sigma, \pi_{\Sigma}) \in \cM_{k+1,l,\ul{m}}^{cover}$)
%and define $\cM_{k+1,l,\ul{m}}(\cL' \sqcup \cL", J_M, \beta)$ to be
%the moduli space of $(\Sigma,v,\pi_{\Sigma})$ such that
% $(\Sigma, \pi_{\Sigma}) \in \cM_{k+1,l,\ul{m}}^{cover}$
%and $v \in \cM(\Sigma, \cL' \sqcup \cL", J_M, \beta)$.

\begin{proof}
The data $\bfx$ is equivalent to the data of $\ul{m}$ via \eqref{eqn:determines}.  % (i.e. $\bfx(z_j^+)=1$ if and only if $m_j=2$
%and $\bfx(z_j^+)=0$ if and only if $m_j=1$).
We can define a map from $\cM_{k+1,l,\ul{m}}(\cL' \sqcup \cL'', J_{\eX}, \beta)$ to $\cM_{k+1,l}(L,J_M,\beta,\bfx)$ which sends
 $(\Sigma,v,\pi_{\Sigma})$ to
$u(z):=v(\pi_{\Sigma}^{-1}(z))$.
This map is surjective by the paragraph before Lemma \ref{l:Area}.
This map is also injective because we have quotiented out the involution induced from $\iota_{\Sigma}$ in both the definition of 
$\cM_{k+1,l,\ul{m}}(\cL' \sqcup \cL'', J_M, \beta)$ and in the definition of isomorphism of elements in $\cM_{k+1,l,\ul{m}}^{cover}$.
The regularity was discussed above.
\end{proof}

%\begin{remark}[Fredholm regularity]\label{r:FredReg}
%The tautoloigcal correspondence not only identify $\cM_{k+1,l}(L,J_{\eX},\beta,\bfx)$
%and $\cM_{k+1,l,\ul{m}}(\cL' \sqcup \cL", J_M, \beta)$ as topological spaces.
%It also identifies the corresponding Fredholm problems so that $(\Sigma, v, \pi_{\Sigma})$ is regular if and only if the corresponding $v$ is regular. The reason is as follows.
%\end{remark}

\begin{remark}\label{r:Mas}
A natural question from the point of view of Floer theory is how the Maslov index of $u$ and $v$ are related.
The definition of the Maslov index of a holomorphic map from an 
orbifold Riemann surface can be found in \cite[Section 3]{Cho-Poddar}.
In our case, we bypass the calculation of the Maslov index of $u$ because
the tautological correspondence allows us to identify the moduli of $u$ 
with the moduli of $(v,\pi_{\Sigma})$, which in turn allow us to compute the virtual dimension using $(v,\pi_{\Sigma})$.
\end{remark}

\begin{remark}
By Lemma \ref{l:bCondition}, there are two types of $(v, \pi_{\Sigma})$, determined by whether $\Sigma$ is connected or not.
We know that $\Sigma$ is disconnected if and only if $\bfx$ involves only the untwisted sector.
Therefore, when we study the filtered $A_{\infty}$ structure $m_{k,\beta}$
on $H(L,\Lambda_0)$,
it is sufficient to consider the moduli spaces of $J_{M}$-holomorphic maps
from $\Sigma:=(S,\ul{z}) \sqcup (S,\ul{z})$ to $M$, where $(S,\ul{z}) $ is a semi-stable nodal disc with
$k+1$ boundary marked points and no interior marked points.
In contrast, if we introduce a bulk deformation $\bfb=\bfb_{\smooth}+\bfb_{\orb} [\eX_1]$ 
such that $\bfb_{\smooth} \in H_*(\eX,\Lambda_+)$ and $\bfb_{\orb} \in \Lambda_+$, then 
the construction of the maps $m_{k,\beta}^{\bfb}$
will involve additional $J_{\eX}$ holomorphic stable maps $((S,\ul{z},\ul{z}^+,\ul{m}),u,\xi)$.
Moreover, for each $((S,\ul{z},\ul{z}^+,\ul{m}),u,\xi)$ contributing to $m_{k,\beta}^{\bfb}$, the coefficient of the term 
it contributes has a factor  $\bfb_{\orb}^l$, where $l$
is the number of orbifold points of $S$ (cf. \eqref{eq:csquared1}, \eqref{eq:csquared2}).
\end{remark}

We recall the Riemann-Hurwitz and virtual dimension formulae.

\begin{lemma}[see Section 7.2.1 of \cite{DonaldsonRS}]\label{l:RH}
Let $\pi_{\Sigma}:\Sigma \to S$ be a $k$-fold branched covering between compact Riemann surfaces (possibly with non-empty boundary) 
such that all critical points are in the interior of $\Sigma$.
Then 
\begin{align}
\chi(\Sigma)=k\chi(S)-\sum_{p \in \Sigma}(e_p-1)
\end{align}
where $\chi(-)$ denotes the  Euler characteristic and $e_p$ is the ramification index at $p$.
In particular, if $k=2$
\begin{align}
\chi(\Sigma)=2\chi(S)-|\operatorname{crit}(\pi_{\Sigma})|
\end{align}
where $|\operatorname{crit}(\pi_{\Sigma})|$ is the number of critical points of $\pi_{\Sigma}$.
\end{lemma}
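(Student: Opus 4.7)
The plan is to use the standard triangulation argument for Riemann--Hurwitz, with a minor adaptation to handle compact surfaces with boundary.

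First I would choose a triangulation $\mathcal{T}$ of $S$ with the property that every branch value of $\pi_\Sigma$ (i.e.\ every image of a critical point) is a vertex of $\mathcal{T}$. Since the critical points of $\pi_\Sigma$ lie in the interior of $\Sigma$, their images lie in the interior of $S$, so such a triangulation exists and restricts to a triangulation of $\partial S$. Let $V,E,F$ denote the number of vertices, edges, and faces of $\mathcal{T}$, so that $\chi(S) = V - E + F$.

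Next I would lift $\mathcal{T}$ to a triangulation $\widetilde{\mathcal{T}}$ of $\Sigma$ via $\pi_\Sigma$. Away from the branch values, $\pi_\Sigma$ is a covering map of degree $k$, so each open edge and each open face of $\mathcal{T}$ has exactly $k$ preimages in $\Sigma$. Hence $\widetilde{\mathcal{T}}$ has $kE$ edges and $kF$ faces. For the vertices, if $q \in S$ is not a branch value, its preimage contains exactly $k$ vertices. If $q$ is a branch value whose preimages are $p_1,\ldots,p_s$ with ramification indices $e_{p_1},\ldots,e_{p_s}$, then $\sum_{i=1}^{s} e_{p_i} = k$, so the number of preimage vertices over $q$ equals $s = k - \sum_{i=1}^s (e_{p_i} - 1)$. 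Summing over all vertices of $\mathcal{T}$, and noting that non-critical points contribute $e_p - 1 = 0$, the total number of vertices of $\widetilde{\mathcal{T}}$ is
\begin{align}
kV - \sum_{p\in\Sigma}(e_p - 1).
\end{align}

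Combining these counts yields
\begin{align}
\chi(\Sigma) \;=\; kV - \sum_{p\in\Sigma}(e_p-1) - kE + kF \;=\; k\chi(S) - \sum_{p\in\Sigma}(e_p-1),
\end{align}
which is the desired formula. The boundary causes no complication because critical points are assumed to be interior, so the covering is unramified along $\partial S$ and the contribution of boundary simplices scales cleanly by $k$ in both $\Sigma$ and $S$. For the case $k=2$, every ramification index at a critical point equals $2$, so $e_p - 1 = 1$ and $\sum_{p}(e_p-1) = |\operatorname{crit}(\pi_\Sigma)|$.

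There is no real obstacle here; the only thing requiring a moment's care is arranging the triangulation to respect branch values and to restrict correctly to $\partial S$, but this is standard. The essential content is the combinatorial bookkeeping of vertex preimages, which is where all the ramification data enters.
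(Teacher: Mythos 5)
Your proof is correct: this is the standard triangulation argument for Riemann--Hurwitz, and your handling of the boundary (unramified since all critical points are interior, so boundary simplices scale by $k$) and of the vertex count over branch values is exactly right. The paper does not prove this lemma but cites it from Section 7.2.1 of Donaldson's \emph{Riemann surfaces}, where essentially the same triangulation bookkeeping is carried out, so there is nothing further to reconcile.
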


\begin{lemma}[see Theorem C.1.10(ii) of \cite{McDuffSalamonBook}]\label{l:vir}
Let $\Sigma$ be a (smooth) compact Riemann surface with boundary.
Let $J$ be an $\omega$-tamed almost complex structure on a symplectic manifold $(M^{2n},\omega)$ of dimension $2n$.
The moduli space of $J$-holomorphic maps $v:\Sigma \to M$ with fixed Lagrangian boundary conditions has virtual dimension
$n(\chi(\Sigma))+\mu(v)$
where $\chi(\Sigma)$ is the Euler characteristic of $\Sigma$ and $\mu(v)$ is the Maslov index of $v$.

More generally, if $\Sigma$ is allowed to vary in a moduli space of Riemann surfaces of dimension\footnote{The dimension $d$ should be interpreted as the dimension of the moduli space minus the dimension of the automorphism group.} $d$, then the moduli space of $J$-holomorphic maps $v:\Sigma \to M$ with fixed Lagrangian boundary conditions has virtual dimension
$n(\chi(\Sigma))+\mu(v)+d$.

\end{lemma}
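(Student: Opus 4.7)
The plan is to reduce the computation to Riemann-Roch for real linear Cauchy-Riemann operators on bundle pairs over a Riemann surface with boundary, and then to account separately for the variation of complex structures on the domain.

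First, fix a smooth compact Riemann surface $\Sigma$ with boundary and a $J$-holomorphic map $v:\Sigma\to M$ with Lagrangian boundary conditions. Linearizing $\overline{\partial}_J$ at $v$ yields a real linear Cauchy-Riemann operator
\begin{align*}
D_v: W^{1,p}((\Sigma,\partial\Sigma),(v^*TM,v|_{\partial\Sigma}^*TL)) \longrightarrow L^p(\Sigma,\Omega^{0,1}\otimes v^*TM).
\end{align*}
Standard Fredholm theory shows $D_v$ is Fredholm, and an implicit function theorem argument shows that, at a regular point (i.e.\ where $D_v$ is surjective), the moduli space of $J$-holomorphic maps with fixed domain is locally a smooth manifold of dimension $\operatorname{ind}(D_v)$. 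The first step of the proof is therefore to compute $\operatorname{ind}(D_v)$.

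The second step is an application of the Riemann-Roch theorem for Cauchy-Riemann operators on bundle pairs over a surface with boundary. This asserts that for a complex vector bundle $E\to\Sigma$ of complex rank $n$ equipped with a totally real subbundle $F\subset E|_{\partial\Sigma}$, any real linear Cauchy-Riemann operator from sections of $(E,F)$ to $(0,1)$-forms with values in $E$ has Fredholm index $n\chi(\Sigma)+\mu(E,F)$, where $\mu(E,F)$ is the boundary Maslov index of the bundle pair. Applying this with $(E,F)=(v^*TM,v|_{\partial\Sigma}^*TL)$ and recognising $\mu(v):=\mu(v^*TM,v|_{\partial\Sigma}^*TL)$ yields the fixed-domain case of the lemma.

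For the third and final step, one accounts for the variation of the complex structure on the domain. The natural framework is to form a universal moduli space fibering over the $d$-dimensional moduli space of Riemann surfaces, with fiber over each $\Sigma$ equal to the moduli space of $J$-holomorphic maps from $\Sigma$. Linearizing simultaneously in the map and in the complex structure gives an extended Fredholm operator whose index is $\operatorname{ind}(D_v)+d$; alternatively, since the projection to the moduli space of domains is a submersion at regular points, the fiber dimension $\operatorname{ind}(D_v)$ and base dimension $d$ simply add. In either formulation, the total virtual dimension becomes $n\chi(\Sigma)+\mu(v)+d$, as claimed. The only subtle point is the careful definition of $d$ when the generic automorphism group of $\Sigma$ is positive-dimensional, which is handled exactly as in Remark \ref{r:moduliexplain}: one subtracts the dimension of that automorphism group, and the corresponding quotient carries a smooth structure near regular points by the usual slice argument.
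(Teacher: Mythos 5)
Your sketch is correct and is essentially the paper's own argument: the paper proves nothing here beyond citing Theorem C.1.10(ii) of \cite{McDuffSalamonBook}, and your proof simply unwinds that citation into the standard Riemann--Roch computation for the bundle pair $(v^*TM, v|_{\partial\Sigma}^*TL)$ followed by adding the domain-moduli dimension $d$ (with the automorphism-group convention of the footnote). No discrepancy with the paper's treatment.
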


\begin{cor}
Let $ v\in \cM(\Sigma, \cL' \sqcup \cL'', J_M, \beta)$ and $\beta_{M}:=v_*[\Sigma, \partial \Sigma] \in H_2(M,\eL' \sqcup \eL'')$.
The virtual dimension of $\cM(\Sigma, \cL' \sqcup \cL'', J_M, \beta)$ is given by
\begin{align}
2(\chi(\Sigma))+\mu(\beta_{M})=2(2-| \{j \, | \, m_j=2  \} |)+\mu(\beta_{M}). \label{eq:DimMap}
\end{align}

Moreover, the virtual dimension of $\cM_{k+1,l,\ul{m}}(\cL' \sqcup \cL'', J_M, \beta)$ (and hence the virtual dimension of $\cM_{k+1,l}(L,J,\beta,\bfx)$) is given by
\begin{align}
\dim(\cM_{k+1,l}(L,J,\beta,\bfx))&=\dim(\cM_{k+1,l,\ul{m}}(\cL' \sqcup \cL'', J_M, \beta))\\
&=4+(k+1)+2 | \{j\, | \, m_j=1  \} |+\mu(\beta_{M}) -3. \label{eq:DimTotal}
\end{align}
\end{cor}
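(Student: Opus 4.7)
The plan is to combine Riemann--Hurwitz (Lemma \ref{l:RH}) with the standard virtual dimension formula (Lemma \ref{l:vir}), and then transfer the conclusion to the orbifold moduli via Lemma \ref{l:FredReg}.

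First I compute $\chi(\Sigma)$. By Lemma \ref{l:bCondition}, $\Sigma$ has either one or two connected components. The two-component case arises precisely when $\pi_\Sigma$ is an unbranched double cover, in which case $\Sigma$ is a disjoint union of two discs so $\chi(\Sigma) = 2$ and $|\{j \mid m_j = 2\}| = 0$, so the identity $\chi(\Sigma) = 2 - |\{j \mid m_j = 2\}|$ holds tautologically. In the connected case, the critical points of $\pi_\Sigma$ are exactly the interior orbifold points $\{z_j^+ \mid m_j = 2\}$ of $S$ by the discussion preceding \eqref{eq:DimDomain}, and $\chi(S) = 1$, so Lemma \ref{l:RH} gives $\chi(\Sigma) = 2 - |\{j \mid m_j = 2\}|$. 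Thus the formula holds uniformly.

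Second, since the involution $v \mapsto v \circ \iota_\Sigma$ on $\cM'(\Sigma, \cL' \sqcup \cL'', J_M, \beta)$ is free, passing to the quotient $\cM(\Sigma, \cL' \sqcup \cL'', J_M, \beta)$ does not change the virtual dimension. With $\Sigma$ fixed, the boundary condition that the two components of $\partial \Sigma$ lie in the distinct Lagrangians $\eL'$ and $\eL''$ is of the form covered by Lemma \ref{l:vir}, so applying that lemma with $n = 2$ yields
\begin{align*}
\operatorname{vdim}\bigl(\cM(\Sigma, \cL' \sqcup \cL'', J_M, \beta)\bigr) = 2\chi(\Sigma) + \mu(\beta_M) = 2\bigl(2 - |\{j \mid m_j = 2\}|\bigr) + \mu(\beta_M),
\end{align*}
which is precisely \eqref{eq:DimMap}.

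Finally, to obtain \eqref{eq:DimTotal}, I allow $(\Sigma, \pi_\Sigma)$ to vary in $\cM_{k+1,l,\ul m}^{cover}$. By the canonical isomorphism $\cM_{k+1,l,\ul m}^{cover} \cong \cM_{k+1,l,\ul m}$ and \eqref{eq:DimDomain}, this contributes $d = k+1+2l-3$ to the total dimension via the second clause of Lemma \ref{l:vir}. Writing $l = |\{j \mid m_j = 1\}| + |\{j \mid m_j = 2\}|$, the $-2|\{j \mid m_j = 2\}|$ term from $2\chi(\Sigma)$ cancels the corresponding contribution coming from $2l$, leaving $4 + (k+1) + 2|\{j \mid m_j = 1\}| + \mu(\beta_M) - 3$, as desired. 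Lemma \ref{l:FredReg} then identifies $\cM_{k+1,l}(L,J,\beta,\bfx)$ with $\cM_{k+1,l,\ul m}(\cL' \sqcup \cL'', J_M, \beta)$ in a way that preserves regularity, so the two virtual dimensions are equal. The main (mild) subtlety is verifying that the Euler characteristic formula applies uniformly in the connected and disconnected cases; everything else is bookkeeping.
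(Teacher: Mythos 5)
Your proposal is correct and follows essentially the same route as the paper: Lemma \ref{l:vir} with $n=2$ gives the left-hand side of \eqref{eq:DimMap}, Lemma \ref{l:RH} (with $\chi(S)=1$) gives $\chi(\Sigma)=2-|\{j\mid m_j=2\}|$, and adding the domain dimension \eqref{eq:DimDomain} via the second clause of Lemma \ref{l:vir} yields \eqref{eq:DimTotal}, with the transfer to $\cM_{k+1,l}(L,J,\beta,\bfx)$ via Lemma \ref{l:FredReg}. Your extra checks (the disconnected case and the freeness of the involution) are fine but not substantively different from the paper's argument, since Lemma \ref{l:RH} already covers the unbranched case uniformly.
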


\begin{proof}
The left hand side of \eqref{eq:DimMap} is a direct application of Lemma \ref{l:vir}
and the right hand side of \eqref{eq:DimMap} is obtained from applying Lemma \ref{l:RH}.
Finally, \eqref{eq:DimTotal} is a direct application of Lemma \ref{l:vir} by adding  \eqref{eq:DimDomain} with \eqref{eq:DimMap}.
\end{proof}

%Clearly, $\pi_\Sigma$ determines the conformal structure of the domain $\Sigma$ of $v$.
%For this fixed $\Sigma$, the moduli of $J_{M}$-holomorphic maps in class $\beta_{M}:=v_*[\Sigma, \partial \Sigma] \in H_2(M,\eL' \sqcup \eL'')$ is given by
%\begin{align}
%2(\chi(\Sigma))+\mu(\beta_{M})=2(2-| \{j \, | \, m_j=2  \} |)+\mu(\beta_{M}) \label{eq:DimMap}
%\end{align}
%where $\chi(\Sigma)$ is the Euler characteristic of $\Sigma$.
%By adding \eqref{eq:DimDomain} with \eqref{eq:DimMap}, we conclude that the virtual dimension of $\cM_{k+1,l}(L,J,\beta,\bfx)$ is
%\begin{align}
%\dim(\cM_{k+1,l}(L,J,\beta,\bfx))=4+(k+1)+2 | \{j\, | \, m_j=1  \} |+\mu(\beta_{M}) -3. \label{eq:DimTotal}
%\end{align}

\begin{rmk}
When $\bfx \equiv 0$, formula \eqref{eq:DimTotal}
reduces to $\dim(L)+(k+1)+2l+\mu(\beta)-3$, noting $\mu(\beta_{M})=\mu(\beta)$ when the domain is a smooth disc.
This recovers the virtual dimension formula for $J_{\eX}$-holomorphic maps
from a disc with $k+1$ boundary marked points and $l$ interior marked points in class $\beta$.
\end{rmk}

\begin{lem}\label{l:H1embed}
Let $\bfb=\bfb_{\smooth}+\bfb_{\orb}[\eX_1]$, and suppose $\bfb_{\smooth} \in H_{6}(\eX,\Lambda_+)$.
Suppose that there is no non-constant $J_{\eX}$ holomorphic stable map $((S,\ul{z},\ul{z}^+,\ul{m}),u,\xi)$ to $(\eX,L)$
such that the corresponding $v_*[\Sigma, \partial \Sigma]$ has vanishing  Maslov index.
Then $H^1(L,\Lambda_0)/H^1(L,2 \pi \sqrt{-1} \ZZ)$
is contained in $\hat{\cM}_{\weak}(L,m^{\bfb})$.

Moreover, given $b \in H^1(L,\Lambda_0)/H^1(L,2 \pi \sqrt{-1} \ZZ)$, every $((S,\ul{z},\ul{z}^+,\ul{m}),u,\xi)$ that contributes to $W^{\bfb}(b)$  has $\mu(v_*[\Sigma, \partial \Sigma])=2$.
\end{lem}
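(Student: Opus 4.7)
The plan is to use dimension counting on the moduli spaces, combined with the tautological correspondence and specific geometric features of the product setup, to show that only Maslov index $2$ classes contribute to $\sum_{k\geq 1} m_k^{\bfb, b_0}(b_+^{\otimes k})$, and that such contributions land in $H^0(L)=\Lambda_+\cdot e_L$. Both conclusions of the lemma then follow.

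First I would invoke the dimension formula \eqref{eq:DimTotal}: $\dim \cM_{k+1,l,\ul m}(L,J,\beta,\bfx)=2+k+2l_1+\mu(\beta_M)$, with $l_1=|\{j: m_j=1\}|$. Each bulk insertion of $\bfb_{\smooth}$ (real codimension $2$ in $\eX$) cuts $2$ dimensions, each insertion of $[\eX_1]$ cuts $0$ (it is the fundamental class of $\eX_1$), and each boundary insertion of $b_+$ (codimension $1$ on $L$) cuts $1$. After all insertions, the constrained moduli has dimension $2+\mu(\beta_M)$, independent of $k, l_1, l_2$; pushed forward by $\ev_0$ to $L$, it represents a class in $H^{2-\mu(\beta_M)}(L,\Lambda_+)$. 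Since $L\cong T^4$ and $\mu(\beta_M)$ is even, only $\mu(\beta_M)\in\{-2,0,2\}$ can contribute.

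Next I would handle the three cases. The non-constant $\mu=0$ case is excluded by hypothesis. The $\mu=-2$ case is excluded geometrically via the tautological correspondence: any contributing $u$ lifts to a $J_M$-holomorphic $v:\Sigma\to M$ whose boundary hits $\eL'$ and $\eL''$ in distinct components (Lemma \ref{l:bCondition}). The product structure $M=S^2\times S^2$ and the product-Lagrangian form $\eL_i=L_i\times S^1_{eq}$ split $v=(v_1,v_2)$ with each $v_i$ a holomorphic map to $S^2$ with Lagrangian circle boundary; for such $v_i$, $\mu(v_i)=2\deg(v_i)\geq 0$, with equality iff constant. When $\Sigma$ is an annulus the $v_1$ component cannot be constant (its two boundary circles lie on the distinct Lagrangian circles $L'$ and $L''$), forcing $\mu(v)\geq 2$; in the disconnected case $\Sigma=D\sqcup D$ non-constancy of $v$ forces at least one factor to be non-constant, again giving $\mu(v)\geq 2$. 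Hence $\mu(v)\geq 0$ with $\mu=0$ iff $v$ is constant, and $\mu=-2$ does not occur. Finally, for $\mu=0$ constant contributions, Definition \ref{d:Jmap}(3) forces $l_2=0$ since $L\subset \eX^{\reg}$ has trivial isotropy; on a formal minimal model of $H^*(L,\Lambda_0)\cong H^*(T^4;\Lambda_0)$ the classical operations reduce to the cup product, and $b_+\cup b_+=0$ by graded commutativity for $b_+$ of odd degree, so the classical contributions on $b_+^{\otimes k}$ all vanish; bulk insertions of $\bfb_{\smooth}$ at constant moduli amount to cap-products that preserve this vanishing. Thus only $\mu(\beta_M)=2$ classes contribute, the sum equals $W^{\bfb}(b)\cdot e_L$ with $W^{\bfb}(b)\in\Lambda_+$, and both claims follow.

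I expect the main obstacle to be the geometric verification that $\mu(v)\geq 2$ for non-constant maps, with the delicate bookkeeping this requires. One must go carefully through the possible branching patterns of the double cover $\pi_\Sigma$ (two discs, annulus, and higher-genus covers allowed by the even-branch-point condition in Lemma \ref{l:bCondition}) and verify positivity of the Maslov index in each case via the $S^2$-factor decomposition. A secondary concern is ensuring the formality-based minimal model argument for constant contributions interacts correctly with the bulk deformation, so that cap products with $\bfb_{\smooth}$ on the classical structure do not produce spurious terms in the minimal model.
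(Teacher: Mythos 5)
Your proposal follows essentially the same route as the paper's proof: the virtual-dimension count showing $m_{k,\beta}^{\bfb,b_0}(b_+,\dots,b_+)$ has cohomological degree $2-\mu(\beta_{M})$, non-negativity and evenness of $\mu(\beta_{M})$ from the product/toric structure (positivity of intersections with the toric boundary), and the hypothesis to dispose of non-constant $\mu=0$ maps. One caveat on a redundant side remark: your claim that non-constancy of $v_1$ on an annulus ``forces $\mu(v)\geq 2$'' is false, since $v_1$ can surject onto the cylinder $C_1$ without meeting the poles, giving $\mu(v_1)=0$ --- this is precisely the configuration that Assumption \ref{a:irrational} and Lemma \ref{l:Maslov0} exist to exclude --- but your proof is unharmed because that case is already removed by the lemma's hypothesis; your explicit treatment of the constant/classical contributions via $b_+\cup b_+=0$ is a point the paper leaves implicit.
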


\begin{proof}
Let $\beta \in H_2(\eX,L)$ and suppose  that $((S,\ul{z},\ul{z}^+,\ul{m}),u,\xi)$ contributes to $m_{k,\beta}^{\bfb,b_0}(b_+,\dots,b_+)$.
Let $\beta_{M}=v_*[\Sigma, \partial \Sigma]$.
Since $\bfb_{\smooth}$ is a codimension $2$ cycle in $\eX$ and $[\eX_1]$ is codimension $0$ in its inertia component,
after taking fiber product between $\cM_{k+1,l}(L,J,\beta,\bfx)$ and these cycles at interior marked points, the virtual dimension becomes
\begin{align}
&(4+(k+1)+2 | \{j\, | \, m_j=1  \} |+\mu(\beta_{M})-3)-2 | \{j\, | \, m_j=1  \} |-0 \label{eq:caldim}\\
=&4+(k+1)+\mu(\beta_{M})-3 \label{eq:caldim2}
\end{align}
where the first term of \eqref{eq:caldim} is the dimension of $\cM_{k+1,l}(L,J,\beta,\bfx)$ in \eqref{eq:DimTotal},
the second term of \eqref{eq:caldim} comes from the codimension of $\bfb_{\smooth}$ 
(taking the fiber product between an evaluation map and a codimension $d$ cycle drops the vitrual dimension by $d$) 
and the last term of \eqref{eq:caldim} comes from the codimension of $[\eX_1]$ which is $0$. 

Since $b \in H^1(L,\Lambda_0)$, the cohomological degree of $m_{k,\beta}^{\bfb,b_0}(b_+,\dots,b_+)$ is given by
\begin{align}
&\dim(L)-((4+(k+1)+\mu(\beta_{M})-3)-k)\\
=&2-\mu(\beta_{M}) \label{eq:virmbbb}.
\end{align}
This is because the cohomological degree is given by subtracting from the dimension of $L$  the virtual dimension of
the fiber product between the evaluation maps of $\cM_{k+1,l}(L,J,\beta,\bfx)$
with the cycles $\bfb_{\smooth}$, $[\eX_1]$, and  the Poincar\'e dual of $b$, at all the interior marked points with $m_j=1$, all the interior marked points with $m_j=2$, and the boundary marked points $z_1,\dots,z_k$, respectively.
The virtual dimension of
this fiber product is nothing but \eqref{eq:caldim2} minus $k$, where $k$ comes from the fact that the Poincar\'e dual of $b$ is a codimension $1$ cycle and there are $k$ boundary points at which we are evaluating. As a result, we obtain \eqref{eq:virmbbb}.

Since $\eL'$ and $\eL''$ are Lagrangian torus fibers of $M$, $\mu(\beta_{M})$ is twice the intersection number between $\beta_{M}$ and the toric boundary divisor of $M$;
in particular, $\mu(\beta_{M}) \ge 0$ by positivity of intersections, and $\mu(\beta_{M})$ is  even.  We also know by \eqref{eq:virmbbb} that $2-\mu(\beta_{M}) \ge 0$, because we have assumed that 
$((S,\ul{z},\ul{z}^+,\ul{m}),u,\xi)$ contributes to $m_{k,\beta}^{\bfb,b_0}(b_+,\dots,b_+)$
and there are no negative degree cochains on $L$.
Together with the assumption that $\mu(\beta_{M}) \neq 0$, we deduce that $\mu(\beta_{M})=2$
and $m_{k,\beta}^{\bfb,b_0}(b_+,\dots,b_+)$ is necessarily of cohomological degree $0$.
That means that $m_{k,\beta}^{\bfb,b_0}(b_+,\dots,b_+)$ is a multiple of the unit $e_L$.
Summing over all possible $\beta$,  the result follows.
\end{proof}

Therefore, the hypotheses of Theorem \ref{t:superpotential2} will hold whenever Lemma \ref{l:H1embed} is applicable.

\subsection{Tropical picture\label{Sec:tropical}}

To compute the superpotential $W^{\bfb}$ 
of $L$ for $\bfb$ as in Lemma \ref{l:H1embed}, it is sufficient to classify Maslov index two $J_{M}$-holomorphic curves with boundary on $\eL' \sqcup \eL''$. 
Since $\eL'$ and $\eL''$ are product Lagrangians and $J_{M}$ is a product complex structure,
we can study $J_{M}$-holomorphic curves with boundary on $\eL' \sqcup \eL''$ by projecting to the two $\mathbb{P}^1$
factors of $M$.
%Let $\pi_1:M \to M$ and $\pi_2:M \to S^2_{2a}$ be the two projections.
In other words, we want to classify holomorphic maps $v=(v_1,v_2)$ for $v_1,v_2:\Sigma \to \mathbb{P}^1$ with boundary on the 
respective projections of $\eL'$ and $\eL''$ such that the sum of the Maslov indices of $v_1$
and $v_2$ is $2$.
This classification is undertaken  in Section \ref{ss:classification}, but in preparation, we 
find it helpful to give a tropical picture which motivates the result.

Let $\Log:(\CC^*)^2 \to \bR^2$ be $\Log(z_1,z_2)=(\log|z_1|,\log|z_2|)$.
Let $p'=\Log(\eL')$ and $p''=\Log(\eL'')$ be two points in $\bR^2$.
The expected paradigm is (see \cite{Gross10, GPS10}):

\begin{philosophy}
The tropicalization of a connected $J_{M}$-holomorphic curve  with boundary on $\eL' \sqcup \eL''$
should give a  `broken' tropical curve $\gamma$ with boundary on $p' \cup p''$. That is, 
$\gamma$ is the image of a continuous map $h$ from a connected weighted finite graph without bivalent vertices $\Gamma$ to $\bR^2$ such that: 
\begin{enumerate}
\item if $v$ is a vertex of $\Gamma$ such that
$h(v)=p'$ (or $p''$), then $v$ is univalent; 
\item for every edge $e$ of $\Gamma$, $h|_e$ is an embedding and $h(e)$ is a line segment of rational slope; 
\item at every vertex $v$ of $\Gamma$ such that $h(v)\not \in \{p',p''\}$,  the balancing condition\footnote{i.e. $\sum_{e \text{ adjacent to } v} w_es_e=0 $, where $w_e$ is the weight of $e$ and $s_e$ is the primitive direction of $h(e)$ pointing towards $h(v)$.} holds;
\item if $h(e)$ has infinite length (i.e. is an unbounded edge), then the primitive direction of $h(e)$ belongs to $\{(\pm 1,0), (0, \pm 1)\}$.
\end{enumerate}
\end{philosophy}

Vertices and edges of $\gamma$ are defined to be the images of the vertices and edges of $\Gamma$.
The Maslov index of $\gamma$ is defined to be twice the number of unbounded edges of $\gamma$ (counted with multiplicity).
The genus of $\gamma$ is defined to be the rank of $H_1(\Gamma)$.

\begin{example}\label{e:Yshape}
Suppose the $x$-coordinate of $p'$ is smaller than that of $p''$.
Then there is a genus $0$ broken tropical curve $\gamma$ with $3$ edges $e_0,e_1,e_2$ such that
$e_0$ is adjacent to $p'$ with primitive direction $(1,-1)$, $e_1$ is adjacent to $p''$ with primitive direction $(-1,-1)$
and $e_2$ is a multiplicity $2$ unbounded edge whose primitive direction is $(0,1)$ (see Figure \ref{fig:Yshape}).  
\end{example}

\begin{figure}[ht]
\begin{center}
 \includegraphics{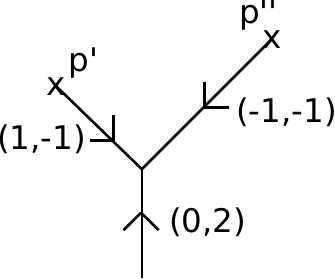} \end{center}
 \caption{A broken tropical curve with boundary on $p' \cup p''$} \label{fig:Yshape}
 
\end{figure}

In our situation, due to Lemma \ref{l:bCondition} (or more specifically \eqref{eq:bCondition}), we are interested in two cases:
\begin{enumerate}
 \item unions of two broken tropical curves $\gamma'$ and $\gamma''$, such that $p'$ is a vertex of $\gamma'$ but not $\gamma''$;
 and $p''$ is a vertex of $\gamma''$ but not $\gamma'$;
 \item broken tropical curves $\gamma$  such that both $p'$ and $p''$ are vertices of $\gamma$.
\end{enumerate}

It will be useful to impose the following `tropical general position' assumption. 

\begin{assumption}\label{a:general}
 The slope of the straight line joining $p'$ and $p''$ is irrational (here $\infty$ is regarded as rational). 
\end{assumption}

\begin{lem}\label{l:generalPosition}
If Assumption \ref{a:general} is satisfied, then there is no non-constant 
 Maslov index zero broken tropical curve with boundary on $p' \cup p''$.
\end{lem}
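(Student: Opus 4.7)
\medskip

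The plan is to argue by contradiction. Suppose $\gamma = h(\Gamma)$ is a non-constant broken tropical curve of Maslov index zero with boundary on $p' \cup p''$. Maslov index zero means $\gamma$ has no unbounded edges, so $\Gamma$ is a finite compact graph; non-constancy means $\Gamma$ has at least one edge.

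First I would choose a nonzero linear functional $\ell: \bR^2 \to \bR$ with $\ell(p') = \ell(p'')$; its kernel is the line through the origin parallel to $p'' - p'$, which by Assumption~\ref{a:general} has irrational slope and hence contains no nonzero integer vector. In particular, for every edge $e$ of $\Gamma$ the primitive integer direction $s_e$ satisfies $\ell(s_e) \neq 0$, so the two endpoints of $e$ have distinct $\ell$-values.

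Second, let $v_0$ be a vertex of $\Gamma$ at which $\ell \circ h$ attains its maximum. At each edge $e$ adjacent to $v_0$ the primitive direction pointing towards $h(v_0)$ satisfies $\ell(s_e) > 0$, since the other endpoint of $e$ lies strictly lower. If $h(v_0) \notin \{p', p''\}$, the balancing condition gives $\sum_e w_e s_e = 0$; applying $\ell$ yields $\sum_e w_e \ell(s_e) = 0$, which contradicts the positivity of every summand (and $v_0$ has at least one adjacent edge since $\Gamma$ is connected and contains an edge). Hence $h(v_0) \in \{p', p''\}$. An identical argument applied to the minimum of $\ell \circ h$ places it in $\{p', p''\}$ as well. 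But $\ell(p') = \ell(p'')$, so the max and min of $\ell \circ h$ coincide, contradicting that the two endpoints of any edge of $\Gamma$ have distinct $\ell$-values.

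The main obstacle, and the step at which the hypothesis is invoked, is the choice of $\ell$: the irrationality of the slope of $p'' - p'$ is precisely what guarantees $\ell(s_e) \neq 0$ for every edge, converting the balancing condition into an incompatible strict positivity statement. Everything else in the argument is extremal-principle bookkeeping, and no further subtleties are anticipated.
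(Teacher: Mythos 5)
Your proof is correct, and it is essentially a fully detailed version of the argument the paper only sketches: both rest on combining the balancing condition with the irrationality of the slope of $p''-p'$ via an extremal principle (the paper phrases the conclusion as "the curve must be a rational-slope segment joining $p'$ and $p''$", whereas you reach the contradiction directly by maximizing and minimizing the functional $\ell$ that annihilates $p''-p'$). Your write-up correctly supplies the details the paper leaves as an exercise, including why $\ell(s_e)\neq 0$ for every edge and why the extrema of $\ell\circ h$ must occur at $p'$ or $p''$.
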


\begin{proof}
By definition, a Maslov index zero  broken tropical curve admits no unbounded edges. 
If such a tropical curve is not a constant, the balancing condition shows that it must be a straight line with rational slope joining 
 $p'$ and $p''$, which does not exist by assumption.
\end{proof}

%Similarly,  the tropicalization of a  Maslov index two $J_{M}$-holomorphic curve
%has exactly one unbounded edge and the unbounded edge has multiplicity $1$.
%For example, the broken tropical curve in Example \ref{e:Yshape} has Maslov $4$ instead of $2$ because the unbounded edge has multiplicity $2$.

\begin{rmk}\label{r:MaslovAdd}
 We define the Maslov index of a union of broken tropical curves to be the sum of the Maslov indices of the components.
 Therefore, by Lemma \ref{l:generalPosition}, if the union has Maslov index $2$
 then it is composed of exactly one Maslov $2$ broken tropical curve and some number of constant tropical curves.
\end{rmk}

We now study the possible 
 Maslov index two broken tropical curves  with boundary on $p' \cup p''$.

\begin{lem}\label{l:preClassification}
Let $\gamma$ be a Maslov $2$ broken tropical curve with boundary on $p' \cup p''$.
\begin{enumerate}
\item If $p'' \notin \gamma$ (resp.  $p' \notin \gamma$), then
$\gamma$ consists of one unbounded edge of multiplicity one emanating from $p'$ (resp. $p''$); this edge has primitive direction $(1,0)$, $(-1,0)$, $(0,1)$ or $(0,-1)$.

\item If $p' \cup p'' \in \gamma$ and $e_0,e_1,e_2$ are edges such that  $e_0$ is adjacent to $p'$, $e_1$ is adjacent to $p''$ and
$e_2$ is the unbounded edge (necessarily) with multiplicity one, then the sum of the weighted directions of $e_0,e_1,e_2$ is $0$.
\end{enumerate}

\end{lem}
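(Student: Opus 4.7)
My plan is to prove both parts by exploiting the summation of the balancing condition over the internal vertices of the graph $\Gamma$. At an internal vertex $v$ the condition reads $\sum_{e \ni v} w_e \vec d_{e,v} = 0$, where $\vec d_{e,v}$ denotes the primitive direction of $e$ at $v$ pointing out of $v$. Summing over all internal vertices, contributions from bounded edges with both endpoints internal cancel pairwise, and what survives is a sum of weighted primitive directions over edges incident to a $p'$- or $p''$-vertex, plus a contribution from the unique unbounded edge $e_2$ when its finite endpoint is itself an internal vertex. Since the Maslov index equals twice the total weight of unbounded edges, Maslov index $2$ means $e_2$ has multiplicity one, so its contribution to the sum is exactly its primitive direction $v_\infty \in \{(\pm 1,0),(0,\pm 1)\}$.

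For Part (1), assume $p''\notin \gamma$. I would case-split on the finite endpoint of $e_2$. If it is a vertex mapping to $p'$, then univalence of $p'$-vertices combined with connectedness of $\Gamma$ forces $\Gamma$ to consist of exactly this single vertex and $e_2$, which is the claimed conclusion. If instead it is an internal vertex $v_0$, then the summation of balancing yields
\begin{align*}
\sum_{e \in E_{p'}^{\mathrm{bounded}}} w_e \vec d_e^{\,\mathrm{out\ of}\ p'} = -v_\infty.
\end{align*}
I would rule this out by pruning. The minimal sub-case has $V_{\mathrm{int}}=1$: every bounded edge must go from $v_0$ to some $p'$-vertex, so all such edges are geometrically parallel to the primitive direction from $v_0$ to $p'$, call it $s$. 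The balancing at $v_0$ becomes $(\sum_e w_e) s = v_\infty$; together with $\sum_e w_e \ge V_{p'} \ge 2$ (forced by $v_0$ having valence $\ge 3$) and $|v_\infty|=1$, $|s|\ge 1$, this is a contradiction. The general case with more internal vertices I would reduce to this minimal one by pruning at an extremal internal vertex (for instance, one furthest from $v_0$ in the internal subtree obtained after deleting $e_2$), using the valence $\ge 3$ constraint together with Assumption \ref{a:general} to exclude collapse of edges into degenerate rational-slope configurations.

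For Part (2), the same summation argument gives
\begin{align*}
\sum_{e\text{ at }p'} w_e \vec d_e^{\,\mathrm{out}} + \sum_{e\text{ at }p''} w_e \vec d_e^{\,\mathrm{out}} + v_\infty = 0.
\end{align*}
A structural argument analogous to Part (1) — again using univalence of $p'$- and $p''$-vertices, connectedness of $\Gamma$, and Assumption \ref{a:general} to forbid rational-slope straight bridges — shows that $\gamma$ has a single $p'$-vertex and a single $p''$-vertex, each with a unique incident bounded edge $e_0$ and $e_1$. Then the displayed identity reduces to the stated three-term balancing relation $w_0 \vec d_0 + w_1 \vec d_1 + w_2 \vec d_{e_2} = 0$.

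The main obstacle is the structural classification in the internal-vertex case of Part (1), which also underlies the reduction in Part (2): ruling out tropical curves where $\Gamma$ has multiple $p'$-vertices, a nontrivial internal subtree, or cycles. The subtlety is that, purely from the summation identity, weighted primitive vectors can in principle cancel in various ways, so one must bring in the geometric constraint that each edge of $\Gamma$ is a straight segment in $\bR^2$ together with the irrational-slope hypothesis in Assumption \ref{a:general} to exclude these possibilities. Making the pruning step precise, and in particular handling possible first homology in $\Gamma$, is the most delicate part of the argument.
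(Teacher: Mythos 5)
Your central device for part (2) — summing the balancing condition over all vertices of $\Gamma$ not mapping to $p'$ or $p''$, so that bounded edges with both endpoints internal cancel in pairs and only the edges at the boundary points and the unbounded edge survive — is exactly the paper's proof of (2), and that identity alone already gives the stated three-term relation once $e_0,e_1$ are read as the edges meeting $p'$ and $p''$. The problem is what you bolt onto it: you then invoke a structural claim that $\gamma$ has a single $p'$-vertex and a single $p''$-vertex, each with a unique incident bounded edge, ``by an argument analogous to Part (1)''. That analogous argument is precisely the step you leave open, the uniqueness claim is neither proved in the paper nor needed for the statement, and in asserting it you appeal to Assumption \ref{a:general}, which is not a hypothesis of Lemma \ref{l:preClassification} (it is imposed only later, in Lemmas \ref{l:cyl1} and \ref{l:cyl2}), so any step that genuinely requires it falls outside the lemma.

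The genuine gap is in part (1). Your case in which the finite end of $e_2$ is a $p'$-vertex is correct (univalence plus connectedness does force $\Gamma$ to be that vertex together with $e_2$), and your one-internal-vertex computation is fine, but the reduction of the general configuration — several $p'$-vertices, a nontrivial internal subgraph, possible cycles — is only gestured at (``pruning at an extremal internal vertex''), you flag it yourself as the delicate point, and it is not clear that pruning preserves balancing. A clean way to close it, with no genericity assumption on $p',p''$: choose a linear functional $\ell$ vanishing on no nonzero integer vector and with $\ell(v_\infty)<0$, where $v_\infty$ is the direction of $e_2$ towards infinity. Since $\ell\to-\infty$ along $e_2$, the maximum of $\ell$ on $\gamma$ is attained; it cannot be attained on the interior of an edge (the edge direction would lie in $\ker\ell$), and at an internal vertex realizing the maximum all outgoing directions $d_e$ satisfy $\ell(d_e)\le 0$, so balancing forces $\ell(d_e)=0$ for all of them, again impossible. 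Hence the maximum is attained only at $p'$, so $\gamma\subset\{\ell\le\ell(p')\}$ for every such $\ell$, i.e. $\gamma\subset p'+\mathbb{R}_{\ge 0}v_\infty$; balancing along this ray together with the weight-one unbounded end and the absence of bivalent vertices then leaves only the single edge, which is (1). (For comparison, the paper asserts (1) outright after noting that Maslov index $2$ gives one unbounded edge of multiplicity one, and proves (2) by the same cancellation identity you wrote down; your proposal is more ambitious but, as written, incomplete exactly where it goes beyond that identity.)
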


\begin{proof}
Since $\gamma$ has Maslov index $2$, it has exactly one unbounded edge, necessarily of multiplicity $1$. Statement $(1)$ follows.

If $p' \cup p'' \in \gamma$, then clearly the edges $e_0,e_1,e_2$ are distinct.
The sum of the weighted directions of $e_0,e_1,e_2$ 
is the sum of the balancing conditions at all vertices of $\gamma$ other than $p'$ and $p''$.
Therefore, it must vanish.
\end{proof}

We need to further analyse case $(2)$ in Lemma \ref{l:preClassification}. 
Suppose from now on that $p'$ is in the third quadrant and $p''$ is in the first quadrant of $\bR^2$.

\begin{lem}\label{l:cyl1}
Suppose that Assumption \ref{a:general} is satisfied.
Let $\gamma$ be a broken tropical curve as in case $(2)$ of Lemma \ref{l:preClassification}.
If $e_2$ has direction $(1,0)$, $e_0$ has weighted direction $(p,q)$ and $e_1$ has weighted direction $(-(p+1),-q)$,
then we have
\begin{align}
\frac{q}{p+1}<m<\frac{q}{p} \label{eq:ineq}
\end{align}
where $m$ is the slope between the line joining $p'$ and $p''$.

Conversely, for every pair of integers $(p,q)$  such that \eqref{eq:ineq} is satisfied, there is a unique genus $0$ broken tropical curve with boundary on $p',p''$, 
consisting of the edges $e_0,e_1,e_2$  with weighted directions $(p,q),(-(p+1),-q)$ and $(1,0)$, respectively.
\end{lem}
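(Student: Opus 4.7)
The plan is to observe that $\gamma$ must have a unique trivalent interior vertex $V$ at which the three edges meet, and then reduce the existence of $\gamma$ to the positivity of two parameters in a small linear system. Since $\gamma$ is a connected graph with exactly three edges $e_0, e_1, e_2$, no bivalent vertices, and with $p'$, $p''$ as the only permitted univalent vertices other than the end of the unbounded edge, the opposite endpoint of each $e_i$ from its distinguished end cannot be univalent nor coincide with $p'$ or $p''$; the only consistent possibility is that they all coincide with the same interior vertex $V$, which is then trivalent.

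Adopting the convention that the ``weighted direction'' of an edge refers to its weight times its primitive direction \emph{pointing toward $V$}, the balancing identity of Lemma \ref{l:preClassification}(2) is automatic, and the geometric constraints become
\begin{align*}
V - p' = t_0 (p, q), \qquad V - p'' = t_1 (-(p+1), -q),
\end{align*}
with $t_0, t_1 > 0$. Subtracting and using the fact that $D_1 := (p'')_1 - (p')_1 > 0$ and $D_2 := (p'')_2 - (p')_2 > 0$ (since $p'$ is in the third and $p''$ in the first quadrant), one obtains the linear system $t_0 p + t_1 (p+1) = D_1$ and $(t_0 + t_1) q = D_2$. The second equation forces $q > 0$ and $t_0 + t_1 = D_2/q$; Cramer's rule then yields
\begin{align*}
t_0 = \frac{D_2(p+1) - D_1 q}{q}, \qquad t_1 = \frac{D_1 q - D_2 p}{q}.
\end{align*}

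Setting $m = D_2/D_1 > 0$, the positivity conditions $t_0, t_1 > 0$ read $m(p+1) > q$ and $mp < q$. For $p \ge 1$ (so $p, p+1 > 0$) these rearrange to $q/(p+1) < m < q/p$, exactly as stated; for $p = 0$ they give $m > q$, matching the convention $q/p = +\infty$; and for $p \le -1$ the first condition forces $m(p+1) \le 0 < q$, incompatible with $m > 0$, so no tropical curve exists, consistent with the inequality being degenerate or reversed in that range. This simultaneously establishes the forward direction (existence of $\gamma$ implies the inequality) and the converse (given $(p, q)$ satisfying the inequality, the explicit formulas produce unique $t_0, t_1 > 0$, hence a unique $V = p' + t_0(p,q)$, and the three segments from $V$ to $p'$, to $p''$, and along the ray in direction $(-1, 0)$ to infinity assemble into the desired genus zero broken tropical curve).

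The substantive content is entirely elementary linear algebra; the main pitfall is pinning down the sign convention linking the stated weighted directions to the geometric displacement vectors, especially for the unbounded edge $e_2$ whose stated direction $(1,0)$ in the convention above dictates that $e_2$ extends westward from $V$. Assumption \ref{a:general} ensures $m$ is irrational, so the strict inequality $q/(p+1) < m < q/p$ rules out the degenerate cases $t_0 = 0$ or $t_1 = 0$ in which the tropical curve would collapse.
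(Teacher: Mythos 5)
The paper offers no argument to compare against here: its ``proof'' is literally ``we leave the proof as an exercise'' together with Figure \ref{fig:FromLeft}, so your write-up is supplying the missing computation. Your computation is correct, and you correctly resolved the one genuinely delicate point, the sign convention: reading the weighted directions as pointing toward the interior vertex is the reading forced by the proof of Lemma \ref{l:preClassification}(2) (the sum there is the sum of the balancing conditions, which the paper takes with directions pointing \emph{towards} the vertex), and it is the reading consistent with Example \ref{e:annulus} and with the holomorphic annuli of Lemma \ref{l:algebraic count}, whose unbounded tropical direction points toward the puncture ($z=0$ resp. $w=0$), i.e.\ opposite to the stated direction of $e_2$. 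With that convention your linear system, the formulas for $t_0,t_1$, the equivalence of $t_0,t_1>0$ with \eqref{eq:ineq}, and the converse (unique $V=p'+t_0(p,q)$, hence a unique trivalent genus $0$ curve) are all right; one small correction is that the strictness of \eqref{eq:ineq} in the forward direction comes from $t_0,t_1>0$, not from Assumption \ref{a:general}, whose role is rather to guarantee that equality never occurs and (see below) to exclude edges parallel to the line $p'p''$.

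The one genuine gap is the opening structural claim in the forward direction: you assume $\gamma$ has exactly the three edges $e_0,e_1,e_2$, hence a single trivalent vertex $V$. But ``as in case $(2)$ of Lemma \ref{l:preClassification}'' only singles out $e_0,e_1,e_2$ as the edge at $p'$, the edge at $p''$ and the unbounded edge; $\gamma$ may have further vertices and edges, e.g.\ the higher-genus curves of Figure \ref{fig:Genus1} discussed right after Lemma \ref{l:cyl2}, and for those there is no single vertex where $e_0,e_1,e_2$ meet, so your two-equation system does not apply. (Your three-leaf/trivalence argument does show that \emph{genus $0$} forces the three-edge picture, which is what the converse needs, but the forward statement is not restricted to genus $0$.) The inequality does hold in general, and can be recovered by a minimum-principle argument: let $\lambda(x)=\det(p''-p',\,x-p')$, which vanishes on the line through $p'$ and $p''$ and tends to $+\infty$ along the unbounded ray (whose outward direction is $(-1,0)$ in your convention). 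By Assumption \ref{a:general} no edge of $\gamma$ is parallel to that line, so at an interior vertex minimizing $\lambda$ the balancing condition would force an edge along which $\lambda$ strictly decreases; hence the minimum of $\lambda$ on $\gamma$ is attained only at $p'$ and $p''$, which forces $\lambda$ to increase along $e_0$ and $e_1$, i.e.\ $\det(p''-p',(p,q))>0$ and $\det(p''-p',(-(p+1),-q))>0$, which is exactly \eqref{eq:ineq}. Adding this (or explicitly restricting the forward direction to the trivalent configuration of Figure \ref{fig:FromLeft}) closes the argument.
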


\begin{proof}
We leave the proof as an exercise. See Figure \ref{fig:FromLeft}.
\end{proof}

\begin{figure}[ht]\begin{center}
 \includegraphics{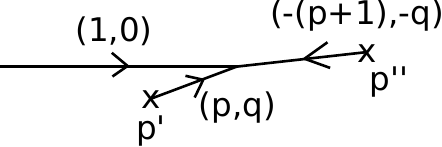}\end{center}
 \caption{A genus $0$ broken tropical curve as in case $(2)$ of Lemma \ref{l:preClassification}} \label{fig:FromLeft}
 
\end{figure}

Similarly, we have

\begin{lem}\label{l:cyl2}
Suppose that Assumption \ref{a:general} is satisfied.
Let $\gamma$ be a broken tropical curve as in case $(2)$ of Lemma \ref{l:preClassification}.
If $e_2$ has direction $(0,1)$, $e_0$ has weighted direction $(p,q)$ and $e_1$ has weighted direction $(-p,-(q+1))$,
then we have
\begin{align}
\frac{q}{p}<m<\frac{q+1}{p} \label{eq:ineq2}
\end{align}
where $m$ is the slope between the line joining $p'$ and $p''$.

Conversely, for every pair of integers $(p,q)$  such that \eqref{eq:ineq2} is satisfied, there is a unique genus $0$ broken tropical curve with boundary on $p',p''$, 
consisting of the edges $e_0,e_1,e_2$  with weighted directions $(p,q),(-p,-(q+1))$ and $(0,1)$, respectively.

\end{lem}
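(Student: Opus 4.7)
The plan is to follow the template of Lemma \ref{l:cyl1} with the roles of horizontal and vertical interchanged, reducing the problem to a $2\times 2$ linear system. First I would note that the trivalent meeting vertex $v$ of $e_0,e_1,e_2$ must satisfy the balancing condition, and by case $(2)$ of Lemma \ref{l:preClassification} the sum of weighted directions at $v$ vanishes. Since $e_2$ has direction $(0,1)$ and $e_0$ has weighted direction $(p,q)$, this immediately forces $e_1$ to have weighted direction $(-p,-(q+1))$, as claimed.

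Next I would parametrise the meeting point. Writing $p''-p'=(\Delta x,\Delta y)$, we have $\Delta x,\Delta y>0$ from the quadrant assumptions, and $m=\Delta y/\Delta x$. The condition that $(p,q)$ is the primitive direction of $e_0$ pointing toward $v$, scaled by its weight, gives $v-p'=t_0(p,q)$ for some $t_0>0$; similarly $v-p''=t_1(-p,-(q+1))$ with $t_1>0$. Subtracting yields
\[
(t_0+t_1)p=\Delta x,\qquad t_0q+t_1(q+1)=\Delta y.
\]
The first equation already forces $p>0$ and $t_0+t_1=\Delta x/p$; substituting into the second gives $t_1=\Delta y-q\Delta x/p$ and $t_0=(q+1)\Delta x/p-\Delta y$. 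The pair of strict positivity conditions $t_0,t_1>0$ is then exactly the inequality \eqref{eq:ineq2}. This proves the forward direction.

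For the converse, given integers $p,q$ with $q/p<m<(q+1)/p$, the same formulas produce a unique positive pair $(t_0,t_1)$, and hence a unique meeting point $v=p'+t_0(p,q)=p''+t_1(-p,-(q+1))$. Joining $p',p''$ to $v$ by segments and attaching an unbounded ray at $v$ with primitive direction $(0,1)$ then produces the desired genus~$0$ broken tropical curve, and uniqueness is immediate. Assumption~\ref{a:general} enters here only to rule out the boundary cases $m=q/p$ or $m=(q+1)/p$, in which $t_1$ or $t_0$ would vanish and $v$ would collapse onto $p''$ or $p'$, destroying the combinatorial type. The main thing to be careful about is the sign convention for ``weighted direction at $v$'' in the balancing condition, so that the three vectors $(p,q),(-p,-(q+1)),(0,1)$ genuinely sum to zero as incoming data at $v$; once that convention is pinned down the rest is elementary linear algebra, which is presumably why the analogous Lemma \ref{l:cyl1} is left as an exercise.
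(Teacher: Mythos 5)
Your linear-algebra computation is correct and is certainly the intended solution of the exercise: the paper gives no argument at all here (Lemma \ref{l:cyl1} is ``left as an exercise'' with a reference to a figure, and Lemma \ref{l:cyl2} is stated by symmetry), and for the genus $0$ tripod your parametrisation $v-p'=t_0(p,q)$, $v-p''=t_1(-p,-(q+1))$ with $t_0,t_1>0$ gives exactly \eqref{eq:ineq2}, together with the converse existence and uniqueness. Your handling of the sign convention is also essentially right: with the paper's balancing convention (primitive directions pointing \emph{towards} the vertex), the three vectors $(p,q),(-p,-(q+1)),(0,1)$ are incoming data at $v$, so the unbounded edge geometrically escapes in the direction $(0,-1)$; this matches the holomorphic picture, where the corresponding disc factor passes through $w=0$.

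The one point where your argument does not cover the statement as written is the forward direction. Case $(2)$ of Lemma \ref{l:preClassification} does not assume $\gamma$ is genus $0$ (the paper explicitly exhibits genus $1$ examples, Figure \ref{fig:Genus1}), and for such $\gamma$ the edges $e_0,e_1,e_2$ need not be adjacent to a common vertex, so the subtraction $p''-p'=t_0(p,q)+t_1(p,q+1)$ is not available; it is also not immediate that $p>0$. The inequality is still true in general, but one needs a further argument, e.g.\ a maximum-principle/extremal argument: for the linear form $\ell_1=py-qx$, which vanishes on $(p,q)$, balancing shows that at any interior vertex minimising $\ell_1$ all adjacent edges are parallel to $(p,q)$, the vertex of the unbounded edge cannot be a minimiser, and an extremal-endpoint argument rules out minimising components avoiding $p',p''$; since the curve strictly decreases $\ell_1$ leaving $p''$, the minimum is at $p'$, giving $p\Delta y>q\Delta x$. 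The symmetric argument with $\ell_2=(q+1)x-py$ gives $(q+1)\Delta x>p\Delta y$, and combining the two also forces $p>0$ (and the same scheme excludes $p=0$). Alternatively, you could simply flag that your proof establishes the forward implication for genus $0$ curves, which is all that is used in the sequel, where these tropical statements serve as heuristics for the holomorphic classification. Assumption \ref{a:general} indeed only guarantees strictness, as you say.
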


\begin{example}\label{e:annulus}
 When $0<m<1$, the tropical curve in Lemma \ref{l:cyl2} with the smallest $p$ has $p=1$ and $q=0$ (see Figure \ref{fig:Annuli} with $p_0$
 and $p_1$ understood as $p'$ and $p''$, respectively).
\end{example}

By symmetry, there are analogous results when $e_2$ has direction $(-1,0)$ or $(0,-1)$.

\begin{rmk}
There are higher genus Maslov $2$ broken tropical curves with boundary on $p' \cup p''$ (see Figure \ref{fig:Genus1}).
However, we will see that they only contribute higher order terms (in the adic filtration) to the bulk deformed superpotential, which will mean we do not need a classification of these curves to prove existence of critical points for the superpotential. 
\end{rmk}

\begin{figure}[ht] \begin{center}
 \includegraphics{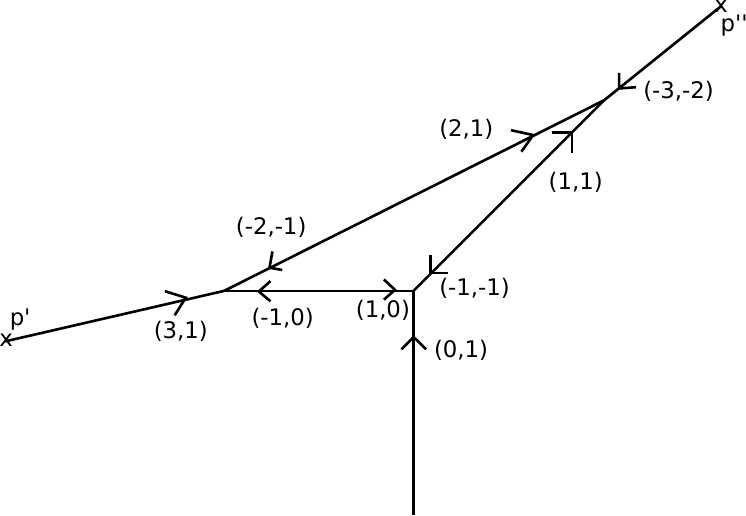}\end{center}
 \caption{A genus $1$ broken tropical curve as in case $(2)$ of Lemma \ref{l:preClassification}} \label{fig:Genus1}
 
\end{figure}

\subsection{Maslov two holomorphic curves}\label{ss:classification}

The tropical picture is heuristic, for two reasons. First, we have not justified that the $\Log_t$-images of a 
$J_{M}$-holomorphic curve with boundary on $\eL' \cup \eL''$ converge to a broken tropical curve.
More importantly, given a broken tropical curve $\gamma$, we have not proved that there is a $J_{M}$-holomorphic curve
with boundary on $\eL'$ and $\eL''$
whose tropicalization is $\gamma$.
In this section, we use  the tropical picture as a guide to help us locate and study holomorphic curves of Maslov index two.

We write $\eL'=\eL'_1 \times \eL_2'$ and $\eL''=\eL''_1 \times \eL_2''$.
We identify $M$ with $\mathbb{P}^1 \times \mathbb{P}^1$
in such a way that $\eL'=\{|z|=r_1'\} \times \{|w|=r_2'\}$
and $\eL''=\{|z|=r_1''\} \times \{|w|=r_2''\}$.
We assume that 
\[
r_1' < 1< r_1'' \qquad \textrm{and} \qquad r_2'< 1< r_2'', 
\] which correspond to the assumption that $p'$ and $p''$  are in the  
in the third respectively first quadrants. 
Let $C_i$ denote the cylinder bound by  $\eL'_i$ and $\eL''_i$, for $i=1,2$.
Moreover, we assume that 
\begin{align}
 \frac{r_1'}{r_1''} <\frac{r_2'}{r_2''} \label{eq:m<1}
\end{align}
which corresponds to the slope condition $0<m<1$, cf. Example \ref{e:annulus}.

Assumption \ref{a:general} translates to the following.
\begin{assumption}\label{a:irrational}
There are no integers $a,b \neq 0$ such that $(r_1' / r_1'' )^a = (r_2' / r_2'')^b$.
\end{assumption}

For the reader's convenience, we recall the following well-known fact.
\begin{lemma}
Let $\Sigma$ be a compact Riemann surface with boundary and let $\mathbb{P}_{\Delta}$ be a smooth toric variety.
Let
$v:\Sigma \to \mathbb{P}_{\Delta}$ be a holomorphic map such that $v(\partial \Sigma)$
is contained in a finite union of Lagrangian torus fibers.
Then the Maslov index of $v$ is given by $\mu(v)=2(v_*[\Sigma]) \cdot D$, where $D$ is the toric boundary divisor. In particular, if $\mathbb{P}_{\Delta}=\mathbb{P}^1$ and $v(\partial \Sigma)$ is contained in a finite union of circles $\{|z|=r_i\}$ for some $r_i >0$, then $\mu(v)=2(v_*[\Sigma]) \cdot [\{0,\infty\}]$.
\end{lemma}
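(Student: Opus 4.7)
The plan is to express $\mu(v)$ as a relative first Chern number of $v^{*}\det_{\mathbb{C}}TX$ and to evaluate it by producing a holomorphic section of $\det_{\mathbb{C}}TX$, built from the toric structure, whose zero divisor is $D$ and whose restriction to each Lagrangian torus fiber has constant argument relative to the Lagrangian trivialization. Granted these ingredients, $\mu(v)$ will equal twice the signed count of zeros of the pullback section, which is $2(v_{*}[\Sigma])\cdot D$ by positivity of intersections.

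First I would recall the standard fact (Appendix C of McDuff--Salamon) that for a holomorphic map $v:(\Sigma,\partial\Sigma)\to(X,L)$ with Lagrangian boundary,
\[
\mu(v) \;=\; 2\,c_{1}^{\mathrm{rel}}\bigl(v^{*}\det\nolimits_{\mathbb{C}}TX,\,\tau_{L}\bigr),
\]
where $\tau_{L}$ is the real trivialization of $\det_{\mathbb{C}}TX|_{L}$ induced by the canonical isomorphism $\det_{\mathbb{R}}TL\otimes_{\mathbb{R}}\mathbb{C}\cong\det_{\mathbb{C}}TX|_{L}$. Equivalently, $c_{1}^{\mathrm{rel}}$ counts with sign the interior zeros of any holomorphic section of $\det_{\mathbb{C}}TX$ whose restriction to $L$ is nowhere zero and has constant argument relative to $\tau_{L}$.

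Next I would let $\xi_{1},\dots,\xi_{n}$ be the holomorphic vector fields on $\mathbb{P}_{\Delta}$ generating the $(\mathbb{C}^{*})^{n}$-action, normalized so that $J\xi_{1},\dots,J\xi_{n}$ generate the compact torus $T_{\mathbb{R}}=(S^{1})^{n}$, and set $\sigma:=\xi_{1}\wedge\cdots\wedge\xi_{n}$, a global holomorphic section of $\det_{\mathbb{C}}TX$. A local check in a toric chart $(w_{1},\dots,w_{n})$ near a $T_{\mathbb{R}}$-fixed point, where $\xi_{i}=w_{i}\partial_{w_{i}}$, gives $\sigma=w_{1}\cdots w_{n}\,\partial_{w_{1}}\wedge\cdots\wedge\partial_{w_{n}}$, so the zero divisor of $\sigma$ is exactly $D$ with multiplicity one along each toric prime divisor. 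On any Lagrangian torus fiber $L$, which is a $T_{\mathbb{R}}$-orbit, $(J\xi_{1},\dots,J\xi_{n})$ is a real frame of $TL$, hence
\[
\sigma|_{L} \;=\; (-i)^{n}\,(J\xi_{1})\wedge\cdots\wedge(J\xi_{n}) \;\in\; (-i)^{n}\det\nolimits_{\mathbb{R}}TL,
\]
exhibiting $\sigma|_{L}$ as a fixed complex scalar multiple of a nowhere-zero real section of $\det_{\mathbb{R}}TL$. Thus $\sigma|_{L}$ has constant argument relative to $\tau_{L}$, so the zero-counting criterion for $c_{1}^{\mathrm{rel}}$ applies. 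The signed count of interior zeros of $v^{*}\sigma$ equals $(v_{*}[\Sigma])\cdot D$ by positivity of holomorphic intersections, and substitution yields $\mu(v)=2(v_{*}[\Sigma])\cdot D$; the $\mathbb{P}^{1}$ statement is the case $D=\{0,\infty\}$.

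The main obstacle is the boundary compatibility in the last step: verifying that $\sigma|_{L}$ has constant rather than varying argument relative to $\tau_{L}$. This is exactly where the hypothesis that $L$ is a Lagrangian torus fiber (a $T_{\mathbb{R}}$-orbit) enters, by providing the global holomorphic frame $\{\xi_{i}\}$ on $L$; for a Lagrangian not of this special type one would not have such a preferred frame, and the formula would acquire boundary winding corrections.
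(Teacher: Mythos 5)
Your argument is correct. Note that the paper itself offers no proof of this lemma --- it is stated as a ``well-known fact'' --- so there is nothing to compare against except the standard literature; your proof is precisely the classical Cho--Oh-type argument (reduce $\mu$ to the relative first Chern number of $\det_{\mathbb{C}}TX$, take the section $\sigma=\xi_1\wedge\cdots\wedge\xi_n$ built from the torus action, observe its divisor is $D$ with multiplicity one and that it restricts to each torus orbit as a constant phase $(-i)^n$ times a real volume form, so the count of interior zeros of $v^*\sigma$ computes $\mu/2$). Two harmless remarks: the local check $\xi_i=w_i\partial_{w_i}$ should be done in an arbitrary cone chart $U_\sigma\cong\mathbb{C}^k\times(\mathbb{C}^*)^{n-k}$ rather than only at fixed points if one wants the statement for non-complete toric varieties (irrelevant for the paper's applications to $\mathbb{P}^1\times\mathbb{P}^1$); and one should observe that since $v(\partial\Sigma)$ lies in the open orbit, no component of $\Sigma$ maps into $D$, so the zeros of $v^*\sigma$ are isolated and their signed count is the intersection number.
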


The following lemma explains the importance of Assumption \ref{a:irrational}.

\begin{lem}[cf. Lemma \ref{l:generalPosition}]\label{l:Maslov0}
Suppose $v:\Sigma \to M$ is a $J_M$-holomorphic map satisfying \eqref{eq:bCondition}.
If Assumption \ref{a:irrational} holds, then $\mu(v) \neq 0$.
\end{lem}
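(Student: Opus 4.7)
The plan is to argue by contradiction. Suppose $\mu(v) = 0$. Writing $v = (v_1, v_2): \Sigma \to \bP^1 \times \bP^1$ and invoking positivity of intersections, $\mu(v_j) = 2\, (v_j)_*[\Sigma, \partial \Sigma] \cdot [\{0, \infty\}] \ge 0$ for $j = 1, 2$, while $\mu(v) = \mu(v_1) + \mu(v_2) = 0$. Hence $\mu(v_1) = \mu(v_2) = 0$, which forces each $v_j$ to miss $\{0, \infty\}$, so $v_j: \Sigma \to \CC^*$.

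If $\Sigma$ is disconnected, each component is a disc with boundary in either $\eL'$ or $\eL''$. On each disc, $\log|v_j|$ is harmonic with constant boundary value, hence is constant by the maximum principle; the open mapping theorem then forces $v_j$ itself to be constant. The map $v$ is thus a disjoint union of two constant maps, which is excluded once we interpret the lemma as applying to non-constant $v$, consistently with how it is used in Lemma~\ref{l:H1embed}.

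When $\Sigma$ is connected, $\partial \Sigma = \partial_1 \Sigma \sqcup \partial_2 \Sigma$ with $v(\partial_1 \Sigma) \subset \eL'$ and $v(\partial_2 \Sigma) \subset \eL''$. Let $u_0: \Sigma \to \bR$ denote the harmonic measure with $u_0 \equiv 0$ on $\partial_1 \Sigma$ and $u_0 \equiv 1$ on $\partial_2 \Sigma$, and set $B_j := \log(r_j''/r_j') > 0$. By uniqueness of harmonic extensions, $\log|v_j| = \log r_j' + B_j u_0$. Since $\omega_j := dv_j/v_j$ is a holomorphic $(1,0)$-form with $\mathrm{Re}(\omega_j) = d\log|v_j| = B_j \, du_0$, one deduces $\omega_j = 2 B_j \, \partial u_0$, so that $\omega_2 = (B_2/B_1)\, \omega_1$.

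The key nontrivial input is that the conjugate period $\int_{\partial_1 \Sigma} \ast du_0$ is nonzero: it equals (up to sign) the integral of the inward normal derivative of $u_0$ along $\partial_1 \Sigma$, which is strictly signed by the Hopf boundary point lemma since $u_0$ attains its minimum $0$ along $\partial_1 \Sigma$ and is positive in the interior. Consequently $\int_{\partial_1 \Sigma} \omega_j$ is a nonzero element of $2\pi i\, \bZ$, being also $2\pi i$ times the winding number of $v_j|_{\partial_1 \Sigma}$ in $\CC^*$. Writing $\int_{\partial_1 \Sigma} \omega_j = 2\pi i\, n_j$ with $n_j \in \bZ \setminus \{0\}$, the proportionality of $\omega_1$ and $\omega_2$ yields $n_2/n_1 = B_2/B_1$, which rearranges to $(r_2'/r_2'')^{n_1} = (r_1'/r_1'')^{n_2}$, contradicting Assumption~\ref{a:irrational}. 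The main obstacle is securing the nonvanishing of the conjugate period, handled via the Hopf lemma; everything else is bookkeeping, and the argument is uniform across genera of connected $\Sigma$.
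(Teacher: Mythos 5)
Your proof is correct and follows essentially the same route as the paper's: reduce to the connected case, use $\mu(v_1)=\mu(v_2)=0$ and positivity of intersections to see each projection avoids $\{0,\infty\}$, identify $\log|v_j|$ via the maximum principle (the paper's $g\equiv 0$, your harmonic-measure formula), deduce the logarithmic derivatives are proportional, and compare boundary periods to produce nonzero integers $a,b$ with $(r_1'/r_1'')^a=(r_2'/r_2'')^b$, contradicting Assumption \ref{a:irrational}. Your explicit Hopf-lemma argument that the conjugate period, hence the winding numbers, are nonzero tidies up a point the paper leaves implicit in its residue-theorem step, and your exclusion of the doubly-constant disconnected configuration is consistent with the paper's own (implicit) restriction to non-constant maps, as in Lemma \ref{l:H1embed}.
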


\begin{proof}

Suppose not.
Let $v=(v_1,v_2):\Sigma \to M$ be  a $J_{M}$-holomorphic curve satisfying \eqref{eq:bCondition}
such that $\mu(v)=0$.
Since $\mu(v)=0$ and $J_M$ is split, we have $\mu(v_1)=\mu(v_2)=0$ by positivity of intersections.
If $\Sigma$ has two connected components $\Sigma_0$ and $\Sigma_1$, then at least one of $v|_{\Sigma_i}$, say $v|_{\Sigma_0}$, is not a constant.
Since $\Sigma_0$ has only one boundary component, then either $v_1|_{\Sigma_0}$ or $v_2|_{\Sigma_0}$ surjects to a disc and has $\mu \ge 2$.
This gives a contradiction.
Therefore, $\Sigma$ is connected.

By the boundary conditions, neither $v_1$ nor $v_2$ can be a constant map.
Therefore, we must have  that $v_1$ and $v_2$ surject onto the cylinders $C_1$ and $C_2$, respectively.
This is a contradiction because
it is well-known that no $\Sigma$ can simultaneously surject onto two annuli 
$\{r_1' \le |z| \le r_1''\}$ and $\{r_2' \le |w| \le r_2''\}$
which satisfy Assumption \ref{a:irrational}.

We present a proof here for the sake of completeness.
%, which is modified from the well-known proof that two circular annuli  are conformally equivalent if and only if their ratio  are the same.

Let the two boundary components of $\Sigma$ be $\partial_0$ and $\partial_1$.
To simplify notation, let the two annuli be
$\{1 \le |z| \le R_1:=\frac{r_1''}{r_1'}\}$
and
$\{1 \le |z| \le R_2:=\frac{r_2''}{r_2'}\}$.
Without loss of generality, we can assume that $|v_i|_{\partial_0}|=1$ for both $i=1,2$.
Let 
\begin{align}
g(z)=\log|v_1(z)|^2-\alpha \log|v_2(z)|^2=2(\log|v_1(z)|-\alpha \log|v_2(z)|)
\end{align} 
where $\alpha=\log(R_2)/\log(R_1)$.
Clearly, $g$ is a harmonic function which vanishes on $\partial \Sigma$.
Therefore $g$ vanishes everywhere.
Note that $g$ is the real part of the (multi-valued) holomorphic function 
\begin{align}
h(z)=\log(v_1(z)^2)-\alpha \log(v_2(z)^2) \label{eq:h}
\end{align}
Therefore, $e^h$ is a (single-valued) constant function. By taking derivative, we have $e^h h'(z)=0$ and hence $h'(z)=0$.

Taking the derivative of \eqref{eq:h}, we get
\begin{align}
\frac{v_1'(z)}{v_1(z)}=\alpha \frac{v_2'(z)}{v_2(z)}
\end{align}
By integrating that identity over a closed curve parallel to $\partial_0$ and applying the residue theorem, we see that $\alpha$ is rational.
This contradicts Assumption \ref{a:irrational}.
\end{proof}

%Now, we want to prove an analogue of 

%The following lemma gives a preliminary classification of Maslov two curves we are interested in. 

\begin{lem}[cf. Lemma \ref{l:preClassification} and Remark \ref{r:MaslovAdd}]\label{l:preCl}
Suppose that Assumption \ref{a:irrational} holds, 
%Let $v=(v_1,v_2): \Sigma \to M$ be a  Maslov index two $J_{M}$-holomorphic curve  with boundary on $\eL' \cup \eL''$.
and that $v=(v_1,v_2):\Sigma \to M$ is a Maslov index two $J_M$-holomorphic map satisfying \eqref{eq:bCondition}.
%Suppose that $\Sigma$ is connected.
\begin{enumerate}
\item If $\Sigma$  is disconnected with connected components $\Sigma_0$ and $\Sigma_1$, then one of $v|_{\Sigma_i}$, say $v|_{\Sigma_1}$, is a constant. Then for $\Sigma_0$, one  $v_i|_{\Sigma_0}$ is a degree one map to a disc and the other is a constant.

\item If $\Sigma$ is connected, then either $v_1$ or $v_2$ is a (possibly unramified) branched  covering of a cylinder, 
and the other surjects to a disc.
\end{enumerate}

\end{lem}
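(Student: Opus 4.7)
The plan is to exploit the product complex structure $J_M$ and analyse each coordinate projection $v_i : \Sigma \to \mathbb{P}^1$ individually, using Maslov additivity $\mu(v) = \mu(v_1) + \mu(v_2)$ and positivity of intersections with the toric divisor $\{0, \infty\} \subset \mathbb{P}^1$. Each $\mu(v_i) = 2 \#(v_i^{-1}\{0,\infty\})$ is a non-negative even integer, so the constraint $\mu(v) = 2$ allows very few distributions of Maslov index between the two factors, and the main work is to determine the possible images of each $v_i$.

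The key preliminary I would establish is the following image characterisation: for any non-constant holomorphic $v_i : \Sigma \to \mathbb{P}^1$ with $v_i(\partial \Sigma) \subset \eL'_i \cup \eL''_i$, the image $v_i(\Sigma)$ is a union of closures of connected components of $\mathbb{P}^1 \setminus (\eL'_i \cup \eL''_i)$. This is a standard open-mapping argument: each interior point of $\Sigma$ maps to the interior of $v_i(\Sigma)$, so $\partial v_i(\Sigma) \subset v_i(\partial \Sigma) \subset \eL'_i \cup \eL''_i$; hence, for any component $U$ of the complement, $U \cap v_i(\Sigma)$ is both open and closed in $U$, and so equals $U$ or is empty. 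The complement has three components: the two open discs $D'_i, D''_i$ containing $0$ and $\infty$ respectively, and the cylinder $C_i$ between $\eL'_i$ and $\eL''_i$. It follows that $\mu(v_i) = 0$ exactly when $v_i(\Sigma) = \overline{C_i}$; $\mu(v_i) = 2$ exactly when $v_i(\Sigma)$ is one of the two closed discs $\overline{C_i \cup D'_i}$ or $\overline{C_i \cup D''_i}$; and $\mu(v_i) \ge 4$ otherwise.

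For case $(1)$, each $\Sigma_j$ is a disc whose boundary maps to a single circle in each $\mathbb{P}^1$-factor; the same image argument, now with only one circle present, shows that any non-constant $v_k|_{\Sigma_j}$ surjects onto one of the two bounded discs and so has $\mu \ge 2$. If both $v|_{\Sigma_0}$ and $v|_{\Sigma_1}$ were non-constant then $\mu(v) \ge 4$, forcing one, say $v|_{\Sigma_1}$, to be constant. On $\Sigma_0$ the identity $\mu(v_1|_{\Sigma_0}) + \mu(v_2|_{\Sigma_0}) = 2$, with each summand either $0$ (constant) or $\ge 2$ (non-constant), forces exactly one summand to vanish and the other to equal $2$, yielding a degree-one biholomorphism onto one of the bounded discs in one factor and a constant in the other.

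For case $(2)$, both $v_i$ must be non-constant, since a constant $v_i$ would send the two disjoint boundary components of $\Sigma$ to a single point of $\eL'_i \cap \eL''_i = \emptyset$. By the preliminary, each $v_i(\Sigma)$ is connected and meets both $\eL'_i$ and $\eL''_i$, so it contains $\overline{C_i}$; the possible Maslov values are $0$, $2$, or $\ge 4$, corresponding respectively to $v_i(\Sigma) = \overline{C_i}$, $v_i(\Sigma)$ a closed disc, or $v_i(\Sigma) = \mathbb{P}^1$. The constraint $\mu(v_1) + \mu(v_2) = 2$ then forces one coordinate, say $v_1$, to have image exactly $\overline{C_1}$, which makes it a branched cover of the cylinder (possibly unramified), and the other to surject onto a closed disc in $\mathbb{P}^1$. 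The only genuinely geometric input is the open-mapping image characterisation; everything else is bookkeeping of non-negative even Maslov indices, and Assumption \ref{a:irrational} is not needed here since $\mu(v) = 2$ is assumed at the outset.
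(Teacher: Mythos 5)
Your proof is correct and follows essentially the same route as the paper's: split via the product complex structure, use positivity of intersections with $\{0,\infty\}$ in each $\mathbb{P}^1$ factor together with the open-mapping characterisation of the images, and bookkeep the even non-negative Maslov indices. Your closing observation is also accurate: the paper's proof nominally cites Lemma \ref{l:Maslov0} (hence Assumption \ref{a:irrational}) only to conclude that the Maslov-zero factor maps on disc components are constant, which already follows from the simpler fact that a non-constant factor map on a disc component has $\mu \ge 2$.
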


\begin{proof}
If $\Sigma$  has two connected components $\Sigma_0$ and $\Sigma_1$, then $\partial \Sigma_i$ is connected.
Therefore, if $v_i|_{\Sigma_j}$ is not a constant, then $\mu(v_i) \ge 2$.
That means that $3$ of the $4$ maps  $\{v_i|_{\Sigma_j}\}_{i,j}$ have Maslov zero and are hence constant (by Lemma \ref{l:Maslov0}).
Suppose $v_i|_{\Sigma_0}$ is not a constant; then it is either a branched covering of a disc or it surjects onto $\mathbb{P}^1$.
However, if it surjects to $\mathbb{P}^1$, then it must have $\mu(v_i) \ge 4$.
Since $\mu(v_i) =2$, we know that $v_i|_{\Sigma_0}$ has degree $1$ and surjects to a disc.

Now asssume that $\Sigma$  is connected with $2$ boundary components.
Without loss of generality, we assume that $\mu(v_1)=0$ and $\mu(v_2)=2$.
Since $\mu(v_1)=0$, it has to surject onto the cylinder bounded by $\eL'_1$ and $\eL''_1$.
On the other hand, $\mu(v_2)=2$ implies that the image of $v_2$ is not the entire $\mathbb{P}^1$
and not the cylinder bounded by $\eL'_2$ and $\eL''_2$.
Therefore, $v_2$ surjects to a disc.
\end{proof}

Let $v$ be a holomorphic curve as in case $(2)$ of Lemma \ref{l:preCl}, 
such that $v_2$ surjects to a disc $D$.
Let $D^\circ$ be the interior of $D$.
By the Lagrangian boundary conditions for $v$, there are two possibilities:
\begin{enumerate}
 \item $D^\circ$ is the component of the complement of $\eL_2''$ that contains $\eL_2'$;
 \item $D^\circ$ is the component of the complement of $\eL_2'$ that contains $\eL_2''$.
\end{enumerate}
By the obvious symmetry, it is sufficient to analyse the first situation.
In this case, we have
\begin{align}
(v_1)_*[\Sigma,\partial \Sigma]=p [C_1] \quad \text{ and } \quad (v_2)_*[\Sigma,\partial \Sigma]=[D]+q [C_2] \label{eq:homo}
\end{align}
for some integers $p \ge 1$ and $q \ge 0$, where the relative classes are in $H_2(S^2_{2B+C},\eL'_1 \cup \eL_1'')$
and $H_2(S^2_{2a},\eL'_2 \cup \eL_2'')$, respectively.
Let $A_{p,q} \in H_2(M,\eL' \cup \eL'')$ be the class characterized by the property that
the projection to the two $\bP^1$ factors are $p [C_1]$ and $[D]+q [C_1]$, respectively, so
\begin{align}
 v_*[\Sigma,\partial \Sigma]=A_{p,q} \label{eq:Apq}
\end{align}
for some integers $p \ge 1$ and $q \ge 0$.

%Given $p,q$, we can ask whether there is a holomorphic curve $v$ such that \eqref{eq:homo} is satisfied.
We can give a classification of such $v$ when $(p,q)=(1,0)$ (cf. Example \ref{e:annulus}).

\begin{lem}\label{l:algebraic count}
 There exists a $J_{M}$-holomorphic curve $v:\Sigma \to M$ with boundary on $\eL' \cup \eL''$ in class $A_{1,0}$.
 Moreover, for a generic pair of points $q' \in \eL'$ and $q'' \in \eL''$, 
 the algebraic count of unparametrized holomorphic curves in $A_{1,0}$ such that $q',q'' \in v(\partial \Sigma)$ is $\pm 1$.
 \end{lem}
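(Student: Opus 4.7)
The plan is to exploit Lemma~\ref{l:preCl} to reduce the problem to an explicit question about holomorphic annuli in $\bP^1$, to construct $v$ by solving a Dirichlet problem with a single-valuedness constraint, and then to compute the algebraic count via the topological degree of an evaluation map between two-tori.

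By Lemma~\ref{l:preCl}, in class $A_{1,0}$ the domain $\Sigma$ is a connected annulus, $v_1:\Sigma \to C_1$ is a biholomorphism, and $v_2:\Sigma \to D$ is a holomorphic map with a single simple interior zero. I normalize $\Sigma$ to the standard annulus $A = \{\zeta \in \CC: 1 \le |\zeta| \le R\}$ with $R=r_1''/r_1'$ and set $v_1(\zeta)=r_1'\zeta$; the remaining datum is $v_2$, considered modulo the residual $S^1$-rotation of $A$. I write
\[
v_2(\zeta) \;=\; c\,(\zeta-\zeta_0)\,\exp\bigl(H(\zeta)\bigr)
\]
with $\zeta_0 \in A^\circ$, $c \in \CC^*$, and $H$ holomorphic on $A$ (no $\zeta^n$ factor is needed because the boundary winding numbers of $v_2$ must equal $d_0=0$ on $\partial_0 A$ and $d_1=1$ on $\partial_1 A$, as dictated by the class $A_{1,0}$). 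The Lagrangian boundary conditions $|v_2|=r_2'$ on $|\zeta|=1$ and $|v_2|=r_2''$ on $|\zeta|=R$ translate to a Dirichlet problem for $\Re H$ with prescribed boundary data depending on $\zeta_0$ and $|c|$, together with a flux-vanishing condition needed for $H$ to be single-valued on $A$. By rotational invariance this flux depends only on $|\zeta_0|$, and a direct monotonicity analysis of the harmonic measure of $A$---using Assumption~\ref{a:irrational} and the slope condition \eqref{eq:m<1}---singles out a unique admissible radius $|\zeta_0| \in (1,R)$. The resulting moduli of $v_2$'s is then a two-real-parameter family parameterized by $(\arg\zeta_0,\arg c) \in T^2$.

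For the count, a generic pair $(q',q'')$ determines unique points $p_0 \in \partial_0 A$ and $p_1 \in \partial_1 A$ via the first coordinates $q_1' = v_1(p_0)$ and $q_1'' = v_1(p_1)$ (absorbing the residual $S^1$-rotation of $A$). The remaining conditions $v_2(p_0)=q_2'$ and $v_2(p_1)=q_2''$ are two real equations in the two real parameters $(\arg\zeta_0, \arg c)$. Lifting to the universal cover $\bR^2 \to T^2$ and passing to arguments, the evaluation map $T^2 \to T^2$ takes the form
\[
(\alpha,\gamma) \;\longmapsto\; \bigl(\gamma - d_0\alpha + \phi_0(\alpha),\ \gamma - d_1\alpha + \phi_1(\alpha)\bigr)
\]
for some $2\pi$-periodic functions $\phi_0,\phi_1$, so its linear part has matrix $\bigl(\begin{smallmatrix} -d_0 & 1 \\ -d_1 & 1 \end{smallmatrix}\bigr)$ of determinant $d_1-d_0 = 1$. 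Hence the evaluation map has topological degree $\pm 1$, and the algebraic count of Maslov $2$ curves in class $A_{1,0}$ through $(q',q'')$ is $\pm 1$ after accounting for Floer-theoretic orientation conventions.

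The most delicate point, and the main obstacle I foresee, is the monotonicity of the flux function in $|\zeta_0|$ needed to uniquely solve the period-vanishing condition. This requires an explicit computation with the harmonic extension on the annulus; once monotonicity is in place, existence and uniqueness of $|\zeta_0|$ in the range $(1,R)$ is guaranteed by the strict inequalities of Assumption~\ref{a:irrational} and \eqref{eq:m<1}, and the remainder of the argument is essentially linear algebra and standard Floer-theoretic sign bookkeeping.
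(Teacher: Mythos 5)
Your argument is correct in substance, and it splits into two halves that compare differently with the paper. For existence and classification you are doing essentially the same classical complex analysis as Lemmas \ref{l:slitdomain} and \ref{l:samelength}: writing $v_2=c(\zeta-\zeta_0)e^{H}$ and imposing single-valuedness of $H$ is exactly the period computation with the Green's function and harmonic measures of the annulus that the paper packages as ``uniformize the slit domain $D_l$''. In particular the point you flag as the main obstacle -- monotonicity of the flux in $|\zeta_0|$ -- is not delicate at all: the harmonic extension of the locally constant part of your boundary data is $\alpha+\beta\log|\zeta|$ and the $\zeta_0$-dependent part contributes $2\pi$ times the harmonic measure $w_1(\zeta_0)$, which is an explicit affine function of $\log|\zeta_0|$; strict monotonicity and solvability for $|\zeta_0|\in(1,R)$ are immediate, with \eqref{eq:m<1} giving $\beta\in(0,1)$ (Assumption \ref{a:irrational} is not actually needed for this lemma). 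For the count your route is genuinely different from, and cleaner than, the paper's: you read off the degree of the evaluation map $T^2\to T^2$ from its action on $H_1$, with determinant $d_1-d_0=1$ forced by the boundary winding numbers of the class $A_{1,0}$, whereas the paper argues by first solving for $\theta_2=f(\theta_1)$ and then using contractibility of the slit $v_2(\partial\Sigma)\cap\eL_2'$ to homotope the remaining evaluation to $f$. Your version is more robust (it only uses the boundary windings, not the special geometry of the slit) and would generalize more readily. Two small items you should make explicit: boundary regularity of the constructed $v_2$ (your Dirichlet data is real-analytic, so reflection gives the smooth extension, as in Lemma \ref{l:slitdomain}), and Fredholm regularity of every curve in the family, which is what lets you identify the algebraic count with the topological degree of the evaluation map; this follows from the splitting $v=(v_1,v_2)$ or from automatic regularity in dimension four \cite{HLSRegular}, cf.\ Remark \ref{r:cylLift}.
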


\begin{proof}

First, we prove the existence of $v$.
Let $\Sigma$ be the annulus $C_1$
and define $v_1:\Sigma \to C_1 \subset \mathbb{P}^1$ to be the inclusion map.

By rescaling, we identify $D$ with the unit disc,  $\eL''_2$ with the unit circle
and $\eL'_2$ with the circle of radius $r_0:=\frac{r_2'}{r_2''}$.
Let $l \subsetneqq \eL'_2$ be a closed arc (usually called a `slit' in the literature, cf. \cite[Chapter $6$]{Ahlfors}, \cite[Chapter $7$]{Nehari})
and define $D_l:=D^\circ \setminus l$.
In Lemmas \ref{l:slitdomain} and \ref{l:samelength}, we provide proofs of the following classical facts:
\begin{enumerate}
\item For every $0<r<r_0$, there is a biholomorphism from  $\{r<|z|<1\}$ to  $D_l$ for some $l \subset \eL'_2$.
Moreover, the biholomorphism can be smoothly extended up to its closure.
\item Given two slits $l_1,l_2 \subset \eL_2'$, the domain $D_{l_1}$
is conformally isomorphic to $D_{l_2}$ if any only if $l_1$ and $l_2$ are of the same length.
\end{enumerate}
By assumption, $\Sigma=\{\frac{r_1'}{r_1''} \le |z| \le 1\}$ and $\frac{r_1'}{r_1''}<\frac{r_2'}{r_2''}=r_0$ (see \eqref{eq:m<1}).
Therefore, we have a holomorphic map 
$v_2:\Sigma \to S^2_{2a}$ such that the two boundary components are mapped to $\eL''_2$ and $l \subset \eL'_2$
for some $l$, respectively.
This proves the existence.

Conversely, 
let $v=(v_1,v_2)$ be a holomorphic curve in class $A_{1,0}$.
Since $v_1$ is a degree one map with the boundary components of $\Sigma$ going to $\eL_1'$ and $\eL_1''$, respectively, 
$v_1$ must be a biholomorphism.
Therefore, we can identify $\Sigma$ with $C_1$ via $v_1$.

On the other hand, $v_2$ is a degree $1$ map
onto the disc $D$.
Therefore $v_2|_{\Sigma \setminus \partial \Sigma}$ is a biholomorphism, so $v_2(\Sigma \setminus \partial \Sigma)$ 
is $D_l$ for some $l$.
%Intuitively, this gives a complete classification of holomorphic lifts of the tropical curve $\gamma$.

Now, let $q'=(q_1',q_2') \in \eL'$ and $q''=(q_1'',q_2'')\in \eL''$.
By identifying $\Sigma$ with the annulus bound between $\eL'_1$ and $\eL''_1$, we can suppose that 
$q_1', q_1'' \in \partial \Sigma$.
To prove the last statement, it suffices to show that the algebraic count of $v_2$ such that $v_2(q_1')=q_2'$
and $v_2(q_1'')=q_2''$ is $\pm 1$.

Let $v_2:\Sigma \to D$ be a degree $1$ holomorphic map with boundary on $\eL'_2$ and $\eL''_2$ as above.
By the classification, all other degree $1$ holomorphic maps with boundary on $\eL'_2$ and $\eL''_2$ are given by
\begin{align}
 v_{2,\theta_1,\theta_2}(z)= e^{i \theta_2} v_2(e^{i \theta_1} z) \label{eq:allmaps}
\end{align}
for some $\theta_1,\theta_2 \in [0,2\pi]$.
Moreover, by automatic regularity, $v_{2,\theta_1,\theta_2}$ is regular for all $\theta_1$ and $\theta_2$.
Therefore, to show that the algebraic count is $\pm 1$, it suffices to show that
\begin{align}
&S^1 \times S^1 \to \eL'_2 \times \eL''_2 \\
&(\theta_1,\theta_2) \mapsto (\ev_1(\theta_1,\theta_2), \ev_2(\theta_1,\theta_2)):=(v_{2,\theta_1,\theta_2}(q_1'), v_{2,\theta_1,\theta_2}(q_1'')) 
\end{align}
is a degree $1$ map.

Let  $f:S^1 \to S^1$ be the smooth function given by $f(\theta_1)=\arg(q_2'')-\arg (v_2(e^{i\theta_1} q_1''))$.
By \eqref{eq:allmaps}, it is clear that for each $\theta_1$, there is a unique $\theta_2$
such that  $\ev_2(\theta_1,\theta_2)=q_2''$, indeed that $\theta_2$ is given by $f(\theta_1)$.
Moreover, since $v_2|_{\eL''_1}: \eL_1'' \to \eL_2''$ is a diffeomorphism, 
$v_2(e^{i\theta_1} q_1'')$ and hence $f$ has degree $\pm 1$.
The problem now reduces to showing that 
\begin{align}
&S^1 \to \eL'_2  \\
&\theta_1 \mapsto \ev_1(\theta_1,f(\theta_1))=v_{2,\theta_1,f(\theta_1)}(q_1') \label{eq:reduce}
\end{align}
is a degree $1$ map.
Since $v_2(\partial \Sigma) \cap \eL'_2$ is contractible, the 
map \eqref{eq:reduce} is homotopic to $f$, which has degree (plus or minus) one.
Therefore, the
algebraic count of  $v_2$ such that $v_2(q_1')=q_2'$
and $v_2(q_1'')=q_2''$ is indeed $\pm 1$.
This completes the proof.
\end{proof}

We now address the two classical facts used in the proof of Lemma \ref{l:algebraic count}.
Recall from the second paragraph of the proof of Lemma \ref{l:algebraic count} that 
$r_0:=\frac{r_2''}{r_2'}$, which is the radius of $\eL_2'$ when we identify $\eL_2''$ with the unit circle.

\begin{lemma}\label{l:slitdomain}
Let $r_1 \in (0,r_0)$. There is a biholomorphism from  $A:=\{r_1<|z|<1\}$ to  $D_l$ for some $l \subset \eL'_2$.
Moreover, the biholomorphism can be smoothly extended up to its closure.
\end{lemma}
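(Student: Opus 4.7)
The plan is to invoke the classical conformal theory of doubly connected planar domains (Ahlfors Ch.~6, Nehari Ch.~7). For any proper closed sub-arc $l \subsetneq \eL'_2$, the slit domain $D_l = D^\circ \setminus l$ is a doubly connected planar domain: its complement in $\mathbb{P}^1$ consists of $\mathbb{P}^1 \setminus D^\circ$ together with the slit $l$. By Koebe's uniformization theorem for such domains, there is a biholomorphism $\phi_l: A(l) := \{r(l) < |z| < 1\} \to D_l$, unique up to rotation and normalized to carry $\{|z|=1\}$ to $\partial D^\circ$; the modulus $r(l) \in (0,1)$ is a conformal invariant.

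Next I would show that as $l$ varies, $r(l)$ realizes every value in the interval $(0, r_0)$. Parametrize $l = l_\theta$ by its angular length $\theta \in (0, 2\pi)$, with the centre of $l_\theta$ fixed at $z = r_0$. Continuity of $r(l_\theta)$ in $\theta$ follows from continuity of the conformal modulus under Carathéodory kernel convergence. The two endpoint behaviours are as follows. As $\theta \to 0^+$, $D_{l_\theta}$ converges in the Carathéodory sense to the punctured disc $D^\circ \setminus \{r_0\}$, whose conformal modulus as a doubly connected domain is $0$. As $\theta \to 2\pi^-$, with basepoint chosen in $\{r_0 < |z| < 1\}$, the Carathéodory kernel of $D_{l_\theta}$ is precisely the round annulus $\{r_0 < |z| < 1\}$, whose modulus is $r_0$. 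Hence $r(l_\theta) \to 0$ and $r(l_\theta) \to r_0$ respectively. By the intermediate value theorem there exists $\theta^* \in (0, 2\pi)$ with $r(l_{\theta^*}) = r_1$, and $\phi_{l_{\theta^*}}$ is the desired biholomorphism.

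For the smooth extension to $\overline{A}$, note that both boundary pieces of $D_l$ lie on real-analytic curves: the unit circle $\{|z|=1\}$, and (both sides of) the slit on $\{|z|=r_0\}$. Iterated Schwarz reflection across these analytic boundary arcs extends $\phi_{l_{\theta^*}}$ to an analytic map defined on an open neighbourhood of $\overline{A}$, save at the two points on $\{|z|=r_1\}$ mapping to the endpoints of $l$, where the extension acquires a square-root-type ramification. This gives the claimed smooth extension.

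The main technical obstacle is justifying the two endpoint asymptotics for $r(l_\theta)$. The $\theta \to 0$ case is immediate since the punctured disc has vanishing conformal modulus. The $\theta \to 2\pi$ case requires Carathéodory kernel convergence: one checks that every compact subset of $\{r_0 < |z| < 1\}$ lies in $D_{l_\theta}$ once $\theta$ is close enough to $2\pi$, and that the basepoint component of $D_{l_\theta}$ cannot accommodate a larger kernel because the inner disc $\{|z| < r_0\}$ becomes separated from the basepoint in the limit. Both endpoint assertions are special cases of the circular slit disc theorem as developed in the cited references, so no genuinely new analysis is required.
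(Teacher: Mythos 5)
Your proof is correct, but it runs in the opposite direction from the paper's. You fix the family of slit discs $D_{l_\theta}$, uniformize each one by an annulus of modulus $r(l_\theta)$, and then hit the prescribed modulus $r_1$ by an intermediate value argument in the slit length, using monotonicity of the modulus (which gives $r(l_\theta)<r_0$) together with the degenerations $\theta\to 0^+$ and $\theta\to 2\pi^-$. The paper instead fixes the annulus $A$ and \emph{constructs} the map outright: it forms $F=\exp(-G(\cdot,a)+c\,w_0+iH)$ from the Green's function with pole at a point $a\in A$ and the harmonic measure $w_0$, chooses $c$ so that the periods of the harmonic conjugate are $2\pi$ on the outer boundary and $0$ on the inner one, and observes that $c=\frac{(2\pi-\beta)\log r_1}{2\pi}$ with $\beta=2\pi w_1(a)$ sweeps out all of $(\log r_1,0)$ as $|a|$ varies, so $c=\log r_0$ is attainable precisely because $r_1<r_0$. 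The two approaches are dual: both reduce to a one-parameter intermediate value argument, but the paper applies it to the \emph{explicit} function $a\mapsto w_1(a)=1-\log|a|/\log r_1$, whereas yours applies it to the non-explicit modulus $\theta\mapsto r(l_\theta)$ and therefore has to import continuity of the conformal modulus under Carath\'eodory kernel convergence (plus the kernel identifications at the two endpoints). Your route is shorter if one grants those classical facts; the paper's is more self-contained and, more importantly, produces the explicit potential-theoretic formula $\log|F|=-G(z,a)+\log(r_0)\,w_0$ that is then reused verbatim in the proofs of Lemma \ref{l:samelength} and Lemma \ref{l:Nonexistence}, so your argument would force those later proofs to be reorganised. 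One small imprecision: the boundary extension of $\phi$ is genuinely analytic at \emph{all} points of $\partial\overline{A}$, including the two preimages of the slit tips — what happens there is that $\phi'$ vanishes (the boundary circle folds onto the slit); it is the inverse map that has square-root branch points at the tips, so "the extension acquires a square-root-type ramification" should be read as a statement about local behaviour, not about multivaluedness of the extension.
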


\begin{proof}
Let  $a \in A$ and $G(z,a)$ be the Green's function, i.e. the unique function determined by the conditions: 
\begin{enumerate}
\item 
$G(z,a)|_{\partial \overline{A}}=0$ 
\item  $G(z,a)+\log|z-a|$ is harmonic and smooth everywhere on $A$.
\end{enumerate}
We denote the inner and outer boundary components of $\partial \overline{A}$ by $\partial_0$ and $\partial_1$.
Let $w_i$ for $i=0,1$ be the corresponding harmonic measures.
That is, $w_i$ is the unique smooth harmonic function on $\overline{A}$
such that $w_i|_{\partial_i}=1$ and $w_i|_{\partial \overline{A} \setminus \partial_i}=0$.
More explicitly, $w_0(z)=\frac{\log(|z|)}{\log(r_1)}$ and $w_1=1-w_0$.

%By contour integration, we have
%\begin{align}
%\int_{\partial_0} \frac{\partial G(z,a)}{\partial n}+\int_{\partial_1} \frac{\partial G(z,a)}{\partial n}=-2\pi \label{eq:period}
%\end{align}
%where $\frac{\partial}{\partial n}$ refers to the outward normal derivative, and the term $-2\pi$ comes from contour integration of a circle encircling the log pole of $G(z,a)$ at $a$.

Since $\Delta G(z,a)=-2\pi\delta_a$,
for any harmonic function $w:A\to \mathbb{R}$, we have the identity (see e.g. \cite[Chapter $1$]{Nehari})
\begin{align}
w(a)=\frac{-1}{2\pi}\int_{\partial \overline{A}} w(z) \frac{\partial G(z,a)}{\partial n}
%=\frac{-1}{2\pi}\int_{\partial_i} \frac{\partial G(z,a)}{\partial n}.
\end{align}
where $\frac{\partial}{\partial n}$ refers to the outward normal derivative.
When $w=1$, it gives
\begin{align}
\int_{\partial_0} \frac{\partial G(z,a)}{\partial n}+\int_{\partial_1} \frac{\partial G(z,a)}{\partial n}=-2\pi \label{eq:period}
\end{align}
When $w=w_i$ for $i=0,1$, it gives
\begin{align}
w_i(a)=\frac{-1}{2\pi}\int_{\partial \overline{A}} w_i(z) \frac{\partial G(z,a)}{\partial n}=\frac{-1}{2\pi}\int_{\partial_i} \frac{\partial G(z,a)}{\partial n}  \label{eq:normal}
\end{align}
Therefore, we have $\beta:=-\int_{\partial_1} \frac{\partial G(z,a)}{\partial n}=2\pi w_1(a) \in (0,2\pi)$.
%and $\beta$ can attain any value in $(0,2\pi)$ by varying $a$.

On the other hand, a direct calculation gives
\begin{align}
\int_{\partial_0} \frac{\partial w_0(z)}{\partial n}=-\frac{2\pi}{\log(r_1)}=-\int_{\partial_1} \frac{\partial w_0(z)}{\partial n}.
\end{align}
For $c=\frac{(2\pi-\beta)\log(r_1)}{2\pi}$, we have
\begin{align}
\int_{\partial_1} \frac{\partial (-G(z,a)+c w_0)}{\partial n}=\beta+c\frac{2\pi}{\log(r_1)}=2\pi; \\
\int_{\partial_0} \frac{\partial (-G(z,a)+c w_0)}{\partial n}=2\pi-\beta-c\frac{2\pi}{\log(r_1)}=0.
\end{align}
That means that the harmonic conjugate $H(z)$ of $-G(z,a)+c w_0$ is a multi-valued function with period
$2\pi$ on $\partial_1$ and period $0$ on $\partial_0$.

The map $F(z)=e^{-G(z,a)+c w_0+iH(z)}$ is therefore
a single valued holomorphic function which maps $a$ to the origin, $\partial_0$ to $\{|z|=e^{c}\}$ with degree $0$
and  $\partial_1$ to $\{|z|=1\}$ with degree $1$.
By a routine argument (see e.g. \cite[Theorem $10$ of Section $5$ of Chapter $6$]{Ahlfors}), one can check that $F$ is a biholomorphism from $A$ to the slit domain $D \setminus \{F(\partial_0)\}$.

Since $0<\beta<2\pi$, we have $\log(r_1)<c<0$. 
On the other hand, for any value $\beta_0$
between $0$ and $2\pi$, there is a unique $a$ (up to automorphism of $A$)
such that $\beta=2\pi w_1(a)=\beta_0$.
Since $r_1<r_0$, we can pick the $a$ such that the corresponding $c$ is $\log(r_0)$, so $F(\partial_0) \subset \eL'_2$.

Finally, since the boundary components of $\overline{A}$ are smooth analytic curves, we claim that  $F$ can be extended smoothly up to the boundary. More precisely, for each point $z_0 \in \partial \overline{A}$, the Riemann mapping theorem implies that we can find a small neighborhood $U \subset \overline{A}$
of $z_0$ such that $U$ is biholomorphic to the upper half of the unit disc by a biholomorphism 
sending $U \cap \overline{A}$ and $z_0$ to the interval $[-1,1]$ and the origin, respectively. %\footnote{There exists an open set $B \subset \mathbb{C}$ such that for some point $z_0 \in \overline{B} \setminus B$, there is no neighborhood of $z_0$ which is biholomorphic to the upper half of the unit disc. Our $\overline{A}$ is a manifold with boundary so this chaotic phenomenon does not happen.} 
We can assume $U$ is sufficiently small that $\log F(z)$ is single-valued and its real part $-G(z,a)+c w_0$ tends to a real constant as $z$ approaches 
$U \cap \overline{A}$.
Therefore, by the reflection principle, $\log F(z)$ and hence $F(z)$ can be extended over $z_0$.
\end{proof}

\begin{lemma}\label{l:samelength}
Given two slits $l_1,l_2 \subset \eL_2'$, the domain $D_{l_1}$
is conformally isomorphic to $D_{l_2}$ if and only if $l_1$ and $l_2$ are of the same length.
\end{lemma}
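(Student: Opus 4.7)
The ``if'' direction is immediate: if $l_1$ and $l_2$ have the same arc length, a rotation of $\mathbb{C}$ about the origin takes $l_1$ to $l_2$ and preserves both $D$ and $\eL_2'$, so it restricts to a biholomorphism $D_{l_1}\to D_{l_2}$.

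For the ``only if'' direction, the plan is to show that any biholomorphism $\psi\colon D_{l_1}\to D_{l_2}$ extends to a rotation of $D$, from which the equality of arc lengths follows. First, Lemma~\ref{l:slitdomain} gives biholomorphisms $\phi_i\colon A_{r_i}\to D_{l_i}$; since the conformal modulus is a complete invariant of doubly-connected planar domains, $r_1=r_2=:r$. The annulus $A_r$ carries the conformal involution $z\mapsto r/\bar z$ swapping its two boundary circles, which transports via $\phi_2$ to an involution of $D_{l_2}$ interchanging $\eL_2''$ with the ``doubled slit'' boundary. After replacing $\psi$ with its composition with this involution if necessary, we may assume that $\psi$ takes the outer boundary component $\eL_2''$ of $D_{l_1}$ to the outer boundary component $\eL_2''$ of $D_{l_2}$.

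Since $\eL_2''$ is a real-analytic curve and $\psi|_{\eL_2''}$ is a conformal diffeomorphism, Schwarz reflection across $\eL_2''$ extends $\psi$ holomorphically to a neighborhood of $\eL_2''$ in $\mathbb{C}$. Combined with $\psi$ itself, this provides a bounded holomorphic map defined on $D\setminus l_1$ (with image in $\overline{D}$). Under the non-boundary-swapping normalization, the two one-sided limits of this map at each interior point of the real-analytic arc $l_1$ agree, so the removable-singularity theorem for bounded holomorphic functions across a real-analytic arc yields a holomorphic extension $\tilde\psi\colon D\to D$. Since $\tilde\psi$ is open and is a bijection on the dense open subset $D\setminus l_1$, it is a conformal automorphism of $D$, hence a M\"obius transformation.

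Finally, $\tilde\psi(l_1)=l_2$ with both slits on $\eL_2'$, so $\tilde\psi$ preserves the circle $\eL_2'$ setwise. The M\"obius automorphisms of $D$ preserving the concentric circle $\eL_2'$ must fix the common center $0$, and so are the rotations $z\mapsto e^{i\theta_0}z$; such rotations preserve arc length, proving the lemma. The main technical obstacle is the extension of $\psi$ across the slit $l_1$: a priori a biholomorphism between doubly-connected domains can carry the two sides of a slit to distinct points of the image slit, obstructing a single-valued extension; the crucial point is that the non-swapping reduction (together with the orientation-preserving nature of conformal maps) forces the two one-sided limits to coincide at each interior point of $l_1$.
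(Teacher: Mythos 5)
The ``if'' direction and your endgame (a M\"obius automorphism of $D$ preserving the concentric circle $\eL_2'$ is a rotation; an injective holomorphic extension with dense image is an automorphism) are fine, but there is a genuine gap at the step you yourself flag as crucial: the assertion that, once $\psi$ is normalized so as not to swap the two boundary components, its two one-sided limits at each interior point of $l_1$ automatically agree. That assertion is false. Take $l_2=l_1$, let $F\colon A\to D_{l_1}$ be the uniformizing map of Lemma~\ref{l:slitdomain}, let $\rho_\theta$ be rotation of the annulus $A$ by a generic angle, and set $\Psi_\theta:=F\circ\rho_\theta\circ F^{-1}$. This is a biholomorphism of $D_{l_1}$ onto itself preserving the outer boundary $\eL_2''$, yet it cannot extend continuously across $l_1$: by the remainder of your own argument any such extension would be a rotation of $D$ preserving the arc $l_1$, hence the identity, whereas $\Psi_\theta\neq\mathrm{id}$. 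The underlying reason is that the inner circle $\{|z|=r\}$ is folded two-to-one onto the slit by an orientation-reversing involution $\sigma$ with two fixed points (the preimages of the endpoints of $l_1$); the one-sided limits of $F\circ\rho_\theta\circ F^{-1}$ match everywhere only if $\rho_\theta$ commutes with $\sigma$, which forces $\rho_\theta$ to permute the two fixed points of $\sigma$ and so excludes all but two values of $\theta$. Orientation-preservation buys you nothing here, because the slit endpoints are not conformally distinguished points of the prime-end boundary circle.

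The true content of the lemma is precisely that \emph{some} biholomorphism $D_{l_1}\to D_{l_2}$ matches the two foldings, equivalently that the folding involution of $F_i\colon A\to D_{l_i}$ is independent of $l_i$ up to rotation of $A$; a soft argument cannot see this, and the paper supplies the missing rigidity by using the marked point $0\in D_{l_i}$. Writing $\log|F_i|=-G(\cdot,a_i)+\log(r_0)\,w_0$ with $a_i=F_i^{-1}(0)$, the requirement that the harmonic conjugate have period $2\pi$ on the outer boundary and $0$ on the inner one forces $w_1(a_1)=w_1(a_2)$, hence $|a_1|=|a_2|$, hence $F_1=e^{i\alpha}F_2\circ\phi$ for a rotation $\phi$ of $A$, and therefore $l_1=e^{i\alpha}l_2$. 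If you want to keep your extension strategy, you would first have to establish such a normal form for $\psi$, at which point the extension argument is no longer needed. (A minor point: the holomorphic involution of $A=\{r<|z|<1\}$ swapping its boundary circles is $z\mapsto r/z$; the map $z\mapsto r/\bar z$ you wrote is antiholomorphic.)
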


\begin{proof}
By the classification of multi-connected domains,  $D_{l_1}$ is biholomorphic to $A:=\{r_1<|z|<1\}$ for some $r_1$.
Let $F_1:A \to D_{l_1}$ be a biholomorphism such that 
the smooth extension of $F_1$ maps $\{|z|=1\}$ to the unit circle. 
Let $a_1\in A$ be the point such that $F_1(a_1)=0$.
Then $\log|F_1(z)|$ is a harmonic function which is smooth everywhere except having a log pole at $a_1$.
Moreover, it maps $\{|z|=1\}$ to $0$ and $\{|z|=r_1\}$ to $\log(r_0)$.
These conditions uniquely characterise $\log|F_1(z)|$, and so we have
\begin{align}
\log|F_1(z)|=-G(z,a_1)+\log(r_0) w_0 \label{eq:F1}
\end{align}
where $G(z,a_1)$ and $w_0$ are the Green's function and harmonic measure introduced in the proof of Lemma \ref{l:slitdomain}.

It follows that if  $D_{l_2}$ is biholomorphic to  $D_{l_1}$ and hence to $A$, then the 
biholomorpism $F_2:A \to D_{l_2}$ satisfies
\begin{align}
\log|F_2(z)|=-G(z,a_2)+\log(r_0) w_0 \label{eq:F2}
\end{align}
for the $a_2 \in A$ such that $F_2(a_2)=0$.

Moreover, we know the periods of the harmonic conjugates of \eqref{eq:F1} and \eqref{eq:F2} are the same, namely $2\pi$ on the outer boundary and $0$ on the inner boundary. In other words, we have
\begin{align}
\int_{\partial_1} \frac{\partial (-G(z,a_1)+\log(r_0) w_0)}{\partial n}=\int_{\partial_1} \frac{\partial (-G(z,a_1)+\log(r_0) w_0)}{\partial n}=2\pi; \\
\int_{\partial_0} \frac{\partial (-G(z,a_2)+\log(r_0)  w_0)}{\partial n}=\int_{\partial_0} \frac{\partial (-G(z,a_2)+\log(r_0)  w_0)}{\partial n}=0.
\end{align}
It implies that 
\begin{align}
\int_{\partial_1} \frac{\partial G(z,a_1)}{\partial n}=\int_{\partial_1} \frac{\partial G(z,a_1)}{\partial n}
\end{align}
By \eqref{eq:normal} and the equation afterwards, it means that
$w_1(a_1)=w_1(a_2)$.
Since $w_1(z)=1-\frac{\log(|z|)}{\log(r_1)}$, we have $w_1(a_1)=w_1(a_2)$ if and only if $|a_1|=|a_2|$.
Therefore, there is an automorphism $\phi$ of $A$ (a rotation) which sends $a_1$ to $a_2$.
As a result, the biholomorphism $F_2':=F_2 \circ \phi: A \to D_{l_2}$ satisfies
\begin{align}
\log|F_2'(z)|=-G(z,a_1)+\log(r_0) w_0
\end{align}
because it sends $a_1$ to the origin.
It follows that $F_1$ and $F_2'$ only differ  by a choice of harmonic conjugate
 of $-G(z,a_1)+\log(r_0) w_0$, and are hence related by $F_1=e^{i \alpha}F_2'$ for some $\alpha$. 
It follows that $l_1$ and $l_2$ have the same length.
\end{proof}

\begin{rmk}\label{r:cylLift}
For a circular cylinder $\{1/r \le |z| \le r\}$, any involution swapping the boundary components belongs to the family
$(z \mapsto \frac{e^{i\theta}}{z})_{\theta \in [0,2\pi]}$.
Consequently, there is a $1$-dimensional family of $2$ to $1$ branched coverings of the cylinder over  the unit disc.
In other words, the $1$ dimensional moduli space of cylinders can be identified with 
 the $1$ dimensional moduli space of double branched coverings of a disc with $2$ interior orbifold marked points (see Remark \ref{r:moduliexplain} and the paragraph before), which is in turn isomorphic to the $1$ dimensional 
moduli space of discs with $2$ interior marked points.

Let $v$ be as in Lemma \ref{l:algebraic count}, and let $z',z'' \in \partial \Sigma$ be such that $v(z')=q'$
and $v(z'')=q''$.
If we choose the involution of $\Sigma$ which exchanges $z'$ and $z''$,
the corresponding $2$ to $1$ branched covering of the unit disc together with $v$ will tautologically correspond
to a $J_{\eX}$-holomorphic orbifold disc $u:S \to \eX$ such that $[q',q''] \in u(\partial S)$.

Notice that, since the complex structure splits, we can write  $v=(v_1,v_2)$. It is easy to check that each $v_i$
is regular and hence $v$ is regular.
Alternatively, one can apply  automatic regularity in dimension $4$ (see \cite[Theorem 2]{HLSRegular}), which in this case says that $\mu(v)=2>1=4g+2\sigma-3$ and hence $v$ is regular.
(Here, $g=0$ is the genus of the domain and $\sigma=2$ is the number of boundary components of the domain.) 
By Lemma \ref{l:FredReg}, we infer that the corresponding holomorphic orbidisc $u$ is also regular.
In particular, $u$ will 
contribute to $m_{k}^{\bfb,b}$ whenever $\bfb_\orb \neq 0$.
Moreover,  
the unparametrized algebraic count in Lemma \ref{l:algebraic count} implies that
the virtual fundamental chain of the moduli containing $u$ (i.e. $\cM_{1,2, \bfx}(\Sym(\eL' \cup \eL'');u_*[S]; [\eX_1]^{\otimes 2},1)$) satisfies
\begin{align}
(\cM_{1,2, \bfx}(\Sym(\eL' \cup \eL'');u_*[S]; [\eX_1]^{\otimes 2},1), \ev_0)=\pm [\Sym(\eL' \cup \eL'')]
\end{align}
where $\bfx(j)=1$ for both $j=1,2$.
\end{rmk}

On the other hand, we can prove the non-existence of maps $v$ as in \eqref{eq:Apq} when $p=1$ and $q>0$.

\begin{lem}\label{l:Nonexistence}
There is no holomorphic curve with boundary on $\eL' \cup \eL''$ in class $A_{1,q}$ for $q>0$.
\end{lem}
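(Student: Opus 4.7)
The plan is to argue by contradiction: suppose $v=(v_1,v_2):\Sigma\to M$ is a holomorphic curve in class $A_{1,q}$ for some $q\ge 1$. Since $(v_1)_*[\Sigma,\partial\Sigma]=[C_1]$, the map $v_1:\Sigma\to C_1$ is a degree one holomorphic surjection between annuli, hence a biholomorphism. Using $v_1$, I identify $\Sigma$ with $\{1\le|z|\le R\}$ after rescaling so that $r_1'=1$, where $R:=r_1''/r_1'$. Similarly I rescale in the second factor so that $\eL_2'=\{|w|=1\}$ and $\eL_2''=\{|w|=S\}$ with $S:=r_2''/r_2'$; the standing inequality $r_1'/r_1''<r_2'/r_2''$ then reads $S<R$.

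From the relative class $(v_2)_*[\Sigma,\partial\Sigma]=[D]+q[C_2]$ one reads off the boundary winding numbers: $v_2|_{\partial_0}$ has winding $q$ around $0\in\mathbb{C}$ and $v_2|_{\partial_1}$ has winding $q+1$, where $\partial_0=\{|z|=1\}$ and $\partial_1=\{|z|=R\}$. The maximum modulus principle confines $v_2(\Sigma)$ to the closed disc $\{|w|\le S\}$, and the argument principle applied via the winding difference $(q+1)-q=1$ shows that $v_2$ has exactly one (simple) zero $z_0\in\Sigma^\circ$.

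The key construction is the auxiliary holomorphic function
\begin{equation*}
g(z):=v_2(z)/z^q,
\end{equation*}
which is well-defined on $\Sigma$ because $0\notin\Sigma$. Then $g$ has a single zero at $z_0$; a direct calculation of magnitudes and windings gives $|g|=1$ with winding $0$ on $\partial_0$, and $|g|=S/R^q$ with winding $1$ on $\partial_1$. Because $q\ge 1$ and $S<R$, the crucial inequality $S/R^q<1$ holds, so the image circle of $g|_{\partial_1}$ lies strictly inside the image circle of $g|_{\partial_0}$.

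Now apply the argument principle to $g$: for any $w_0\in\mathbb{C}\setminus g(\partial\Sigma)$, the number of preimages of $w_0$ in $\Sigma^\circ$ equals $W(g|_{\partial_1},w_0)-W(g|_{\partial_0},w_0)$. Since $g|_{\partial_0}$ takes values in $\{|w|=1\}$ and has winding $0$ around the origin, its winding around any $w_0$ with $|w_0|\ne 1$ vanishes, while $g|_{\partial_1}$ parametrizes $\{|w|=S/R^q\}$ with winding $1$ and so contributes $1$ when $|w_0|<S/R^q$ and $0$ when $|w_0|>S/R^q$. Combining, the degree of $g$ over $w_0$ equals $1$ on $\{|w|<S/R^q\}$ and vanishes at all other points outside $g(\partial\Sigma)$. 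The open mapping theorem then forces $g(\Sigma^\circ)\subset\{|w|<S/R^q\}$, so $|g(z)|<S/R^q<1$ throughout $\Sigma^\circ$. But continuity of $g$ gives $|g(z)|\to 1$ as $z\in\Sigma^\circ$ approaches any point of $\partial_0$, a contradiction. The only subtle point is the winding-number bookkeeping; the case in which $D$ lies on the other side of $\eL_2'$ is handled by the symmetry remarked just before the statement.
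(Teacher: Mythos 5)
Your proof is correct, but it takes a different route from the paper. You both begin identically: the condition $p=1$ forces $v_1$ to be a biholomorphism onto $C_1$, so $\Sigma$ is the annulus $\{1\le |z|\le R\}$ and the class $A_{1,q}$ pins down the boundary moduli and the winding numbers ($q$ on the inner boundary, $q+1$ on the outer) of $v_2$. From there the paper stays within the Green's-function framework it has already set up for Lemmas \ref{l:slitdomain} and \ref{l:samelength}: it writes $\log|v_2|=-G(z,a)+\log(r_0)w_0$, computes the period of the harmonic conjugate around the outer boundary as $\beta+\log(r_0)\tfrac{2\pi}{\log(r_1'/r_1'')}$ with $\beta\in(0,2\pi)$, and observes that the inequality \eqref{eq:m<1} bounds this strictly below $4\pi$, contradicting the required period $(q+1)2\pi\ge 4\pi$. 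You instead introduce the auxiliary function $g=v_2/z^q$ and run an elementary argument-principle/degree computation: $|g|=1$ with winding $0$ on $\partial_0$, $|g|=S/R^q<1$ with winding $1$ on $\partial_1$ (this is exactly where $S<R$, i.e.\ \eqref{eq:m<1}, and $q\ge1$ enter), so the degree of $g$ vanishes off the small disc $\{|w|<S/R^q\}$; combined with the open mapping theorem this traps $g(\Sigma^\circ)$ in that disc, contradicting $|g|=1$ on $\partial_0$. Note that the maximum principle alone would not suffice here — your use of the vanishing degree over the intermediate annulus is the essential extra input, and it is correctly handled, as is the bookkeeping matching boundary components of $\Sigma$ with $\eL'$ and $\eL''$ (which is forced by Lemma \ref{l:bCondition}). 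What each approach buys: the paper's computation reuses the potential-theoretic machinery that also drives its existence and classification results (Lemma \ref{l:algebraic count}), so the nonexistence statement comes out of the same period formula; your argument is self-contained and more elementary, avoiding Green's functions and harmonic measure entirely, at the cost of being special to this particular winding configuration. Your closing remark about the case where $D$ lies on the other side of $\eL_2'$ is harmless but not needed for the statement as given, since that case is covered by the symmetry discussed after the lemma rather than by the lemma itself.
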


\begin{proof}
Suppose that such a curve exists.
The hypothesis $p=1$ implies that $v_1$ is a biholomorphism to $C_1$.
Therefore, $\Sigma$ is a cylinder; we identify it with $\{  \frac{r_1'}{r_1''}  \le |z| \le 1\}$.
As before, we identify $D$ with the unit disc
and $L_2'$ with $\{|z|=r_0<1\}$.

By the definition of $A_{1,q}$,
the holomorphic map $v_2$ has degree $1$ to the disc $\{|z| \le r_0\}$
 and degree $q+1$ to the cylinder $\{r_0 \le |z| \le 1\}$.
Let $a \in \Sigma$ be the point such that $v_2(a)=0$.
The harmonic function $\log|v_2|$
is smooth everywhere except at $a$, where it has a log pole.
Moreover, it maps $\{|z|=1\}$ to $0$ and $\{|z|= \frac{r_1'}{r_1''}\}$ to  $\log(r_0)$.
Therefore, we have
\begin{align}
\log|v_2|=-G(z,a)+\log(r_0) w_0
\end{align}
where $G(z,a)$ and $w_0$ are the Green's function and harmonic measure respectively.

As in Lemma \ref{l:slitdomain}, we have
\begin{align}
\int_{\partial_1} \frac{\partial (-G(z,a)+\log(r_0) w_0)}{\partial n}=\beta+\log(r_0)\frac{2\pi}{\log(r_1'/r_1'')} \label{eq:nocurve1}
%\int_{\partial_0} \frac{\partial (-G(z,a)+\log(r_0) w_0)}{\partial n}=2\pi-\beta-c\frac{2\pi}{\log(r_1'/r_1'')}
\end{align}
for some $\beta \in (0,2\pi)$.
By our assumption, we have $\frac{r_1'}{r_1''} <\frac{r_2'}{r_2''} =r_0<1$ so the quantity in \eqref{eq:nocurve1} is less than $4\pi$.

However, this is a contradiction. Indeed, the harmonic conjugate of $-G(z,a)+\log(r_0) w_0$, which is the harmonic conjugate of 
$\log|v_2|$, must have period $(q+1)2\pi \ge 4\pi$, because $v_2|_{\{|z|=1\}}$ has degree $q+1$.
This completes the proof.
%Moreover, $F$ maps $\{|z|=1\}$ to $\{|z|=1\}$ with degree $q+1$
%and maps $\{|z|= \frac{r_1'}{r_1''}\}$ to  $\{|z|=r_0\}$ with degree $-q$
\end{proof}

\begin{rmk}
 Lemma \ref{l:Nonexistence} corresponds to  the fact that in Lemma \ref{l:cyl2}, there is no broken tropical
 curve with $p=1$ and $q>0$ when $0<m<1$.
\end{rmk}

By symmetry, the analogues of Lemma \ref{l:algebraic count} and \ref{l:Nonexistence} hold when 
$v$ is a holomorphic curve as in case $(2)$ of Lemma \ref{l:preCl} (one for which $v_2$ surjects to the complementary component of $\eL_2'$ that contains $\eL_2''$).

\section{Concluding the proof}

So far, we have been using Assumption \ref{a:irrational}.
However, the fibres $\eL_0$ and $\eL_1$ in Theorem \ref{t:nondisLag} do not satisfy this assumption, 
because their  second factors coincide.
Therefore, we cannot apply the previous results directly to $\Sym(\eL)$.
We remedy this issue by applying what is commonly referred to as `Fukaya's trick'.

As in the previous section, let $\eL'$ and $\eL''$ be Lagrangian torus fibers in $M$ such that Assumption \ref{a:irrational} is satisfied.
Without loss of generality we may assume that $\eL'$ and $\eL''$
are close enough to $\eL_0$ and $\eL_1$ such that there is a \emph{diffeomorphism} 
$\Phi: M \to M$ satisfying
\begin{enumerate}
\item $\Phi(\eL)=\eL' \cup \eL''$, and
\item $\Phi^*J_{M}$ is $\omega_{M}$-tamed.
\end{enumerate}
Tautologically, the pull-back of $J_{M}$-holomorphic curves with boundary on $\eL' \cup \eL''$
under $\Phi$ are $\Phi^*J_{M}$-holomorphic curves with boundary on $\eL$, and vice versa.
Therefore, we can apply the previous discussion to study the superpotential of $\Sym(\eL)$
with respect to the symmetric product of (the integrable complex structure) $\Phi^*J_{M}$ on $\eX$.

\begin{rmk}
The superpotential will be different for different choices of $\eL'$ and $\eL''$.
That is because the corresponding almost complex structures $\Phi^*J_{M}$ differ, and there is  
wall-crossing when one interpolates between them (cf. \cite{Auroux-wall} and the scattering diagrams in \cite{Bousseau}). 
\end{rmk}

Now, we give a brief summary of the terms of the superpotential to which the $J_{M}$-holomorphic curves in 
Lemma \ref{l:preCl} and \ref{l:algebraic count} contribute.

Let $B^\circ_{1,1}, B^\circ_{1,2}, B^\circ_{2,1}$ and $B^\circ_{2,2}$ be the open discs depicted in Figure \ref{Fig:discs}, such that
\begin{enumerate}
 \item $B^\circ_{1,1}$ is the connected component of the complement of $\eL_1'$ not containing $\eL_1''$;
 \item $B^\circ_{1,2}$ is the connected component of the complement of $\eL_2'$ not containing $\eL_2''$;
 \item $B^\circ_{2,1}$ is the connected component of the complement of $\eL_1''$ not containing $\eL_1'$;
 \item $B^\circ_{2,2}$ is the connected component of the complement of $\eL_2''$ not containing $\eL_2'$.
\end{enumerate}

\begin{figure}[ht]
\begin{center} 
\begin{tikzpicture}[scale=0.6]

\draw[semithick] ellipse  (3.05 and 3.05);
\draw[semithick] (0.7,2.95) arc (10:-10:17);
\draw[semithick] (-0.68,2.96) arc (10:-10:17);
\draw[semithick, dashed] (-0.68,2.96) arc (170:190:17);
\draw[semithick, dashed] (0.7,2.95) arc (170:190:17);

\draw (-0.6,-3.6) node {$\EuScript{L}_1'$};
\draw (0.7,-3.6) node {$\EuScript{L}_1''$};
\draw (6.4,-.6) node {$\EuScript{L}_2'$};
\draw (6.4,0.6) node {$\EuScript{L}_2''$};

\draw[semithick]  (10,0) ellipse  (3.05 and 3.05);
\draw[semithick,dashed] (7.,-0.6) arc (100:80:17);
\draw[semithick] (7.,-.6) arc (260:280:17);
\draw[semithick,dashed] (7, 0.6) arc (100:80:17);
\draw[semithick] (7,.6) arc (260:280:17);

\draw (-2,0) node {$B_{1,1}^{\circ}$};
\draw (2,0) node {$B_{2,1}^{\circ}$};
\draw (10,-2) node {$B_{1,2}^{\circ}$};
\draw (10,+2) node {$B_{2,2}^{\circ}$};

\end{tikzpicture}
\end{center}
\caption{Labellings for disc classes\label{Fig:discs}}
\end{figure}
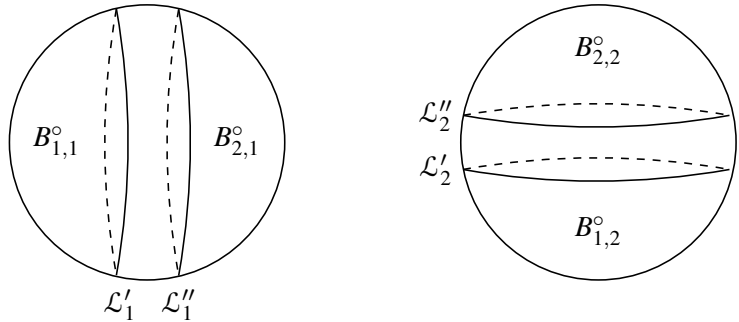

Let $B_{i,j}$ be the closure of $B^\circ_{i,j}$.
Let $\beta_{1,1}, \beta_{1,2}, \beta_{2,1}, \beta_{2,2}, \delta_1, \delta_2$ be the classes in  $H_2(M, \eL)$
such that their pushforward in $H_2(M, \eL' \cup \eL'')$ by $\Phi$
are the classes determined by
\begin{align}
 (\pi_1)_* \Phi_* \beta_{1,1}=B_{1,1} &\quad \text{ and } \quad (\pi_2)_* \Phi_* \beta_{1,1}=0 \\
 (\pi_1)_* \Phi_* \beta_{1,2}=0 &\quad \text{ and } \quad (\pi_2)_* \Phi_* \beta_{1,2}=B_{1,2} \\
 (\pi_1)_* \Phi_* \beta_{2,1}=B_{2,1} &\quad \text{ and } \quad (\pi_2)_* \Phi_* \beta_{2,1}=0 \\
 (\pi_1)_* \Phi_* \beta_{2,2}=0 &\quad \text{ and } \quad (\pi_2)_* \Phi_* \beta_{2,2}=B_{2,2} \\
 (\pi_1)_* \Phi_* \delta_1=C_1 &\quad \text{ and } \quad (\pi_2)_* \Phi_* \delta_1=0 \\
 (\pi_1)_* \Phi_* \delta_2=0 &\quad \text{ and } \quad (\pi_2)_* \Phi_* \delta_2=C_2
\end{align}
where $\pi_1:M \to S^2_{2B+C}$ and $\pi_2:M \to S^2_{2a}$ are the projection to the two factors.
In particular, the class $A_{p,q}$ in \eqref{eq:Apq} is given by $\Phi_* (p\delta_1+ \beta_{1,2}+(q+1) \delta_2)$.

\begin{figure}[ht]\begin{center}
 \includegraphics{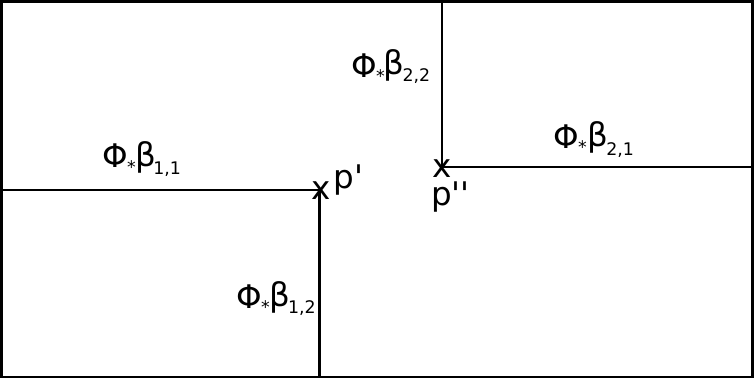}\end{center}
 \caption{A tropical picture of the four disc classes in the homology basis. The rectangle represents the moment map image of the toric boundary.}\label{fig:HomologyBasis}
\end{figure}

Note that, $H_2(M, \eL)$ is freely generated by these six classes.
Their symplectic areas are given by
\begin{align}
&\omega_{M}( \beta_{1,1})=\omega_{M}( \beta_{2,1})=B \\
&\omega_{M}( \beta_{1,2})=\omega_{M}( \beta_{2,2})=a \\
&\omega_{M}( \delta_1)=C, \quad \omega_{M}( \delta_2)=0
\end{align}

By Lemma \ref{l:preCl}, to compute the superpotential of $\Sym(\eL)$, there are two classes of  $\Phi^*J_{M}$-holomorphic maps $v:(\Sigma,\partial \Sigma) \to (M, \eL)$
we want to consider, namely, those such that $\Sigma$ is a disjoint union of two discs $\Sigma_0 \sqcup \Sigma_1$ and those such that $\Sigma$ is connected.
We consider the former case first. By Lemma \ref{l:preCl}, three of the maps $v_i|_{\Sigma_j}$ (for $i=1,2$, $j=0,1$) are constants mapping to a point on the respective Lagrangian (boundary conditions) and the remaining one is
a biholomorphism onto a disc.
Therefore for each $\beta_{k,l}$,  the Maslov index $2$ elements in $\cM_{1,0}(\Sigma, \eL,\Phi^*J_{M},\beta_{k,l})$
are precisely those $v$ in Lemma \ref{l:preCl}(1) such that the only non-constant map $v_i|_{\Sigma_j}$
is in class $\beta_{k,l}$.
Together with the trivial $2$ to $1$ covering from $\Sigma$ to the disc, these $v$ lift to a Maslov $2$ holomorphic disc $u$ in $\eX$
with boundary on $\Sym(\eL)$.
Moreover, by looking at the algebraic count of these $v$ as in Lemma \ref{l:algebraic count}, it is easy to see that the virtual fundamental chain of the moduli containing $u$ satisfies
\begin{align} 
(\cM_{1,0}(\Sym(\eL),\widetilde{\beta}_{k,l}, 1, 1), \ev_0)=\pm [\Sym(\eL)] \label{eq:VirChain1}
\end{align}
where 
$\widetilde{\beta}_{k,l} \in H_2(\eX, \Sym(\eL))$ is the class corresponding to $\beta_{k,l}$ (i.e. $u_*[S]=\widetilde{\beta}_{k,l}$ when $v_*[\Sigma]=\beta_{k,l}$).
Equation \eqref{eq:VirChain1} holds because all the elements in $(\cM_{1,0}(\Sym(\eL),\widetilde{\beta}_{k,l}, 1, 1)$ are regular (which can be proved directly or by applying automatic regularity to $v$, cf. Remark \ref{r:cylLift}) and it is easy to check that $\ev_0$ is a degree $\pm 1$ map to $\Sym(\eL)$ (see Remark \ref{r:orientation} below for the discussion of orientation).
%Equation \eqref{eq:VirChain1} is sometimes called `the algebraic count of Maslov index two disc in class $\widetilde{\beta}_{k,l}$ is $\pm 1$'.

We fix coordinates $(x_1,x_2,y_1,y_2)$ on $H^1(\Sym(\eL),\Lambda_0)/H^1(\Sym(\eL),2 \pi \sqrt{-1} \mathbb{Z})$ (playing the role of $Y_i$ in \eqref{eq:familiarW}) such that
the $4$ families of holomorphic discs in classes $ \widetilde{\beta}_{1,1},  \widetilde{\beta}_{1,2},  \widetilde{\beta}_{2,1}$ and $ \widetilde{\beta}_{2,2}$
contribute  the terms
 $T^Bx_1$, $T^ax_2^{-1}$, $T^By_1$  and $T^ay_2^{-1}$ in the superpotential, respectively (the areas of the discs and hence the $T$-powers are as stated by an application of Lemma \ref{l:Area}).

\begin{rmk}\label{r:orientation}
  The coordinates $(x_1,x_2,y_1,y_2)$ are obtained as follows.
  We have  identified $\eL'$ with $\{|z|=r_1'\} \times \{|w|=r_2'\} \subset \CC^2$.
  We apply the involution $w \mapsto 1/w$ to the second factor to identify 
  $\eL'$ with $\{|z|=r_1'\} \times \{|w|=1/r_2'\} \subset \CC^2$.  In this description, we give 
  each factor the counterclockwise orientation induced from $\CC$, and we use these orientations to define $x_1,x_2$.  We
 identify each factor of $\eL'$ with $\mathbb{R}/\mathbb{Z}$ by $\theta \mapsto e^{i\theta}$ and use the natural trivialization $T\mathbb{R}=\mathbb{R}^2$ to induce a trivialization of $T(\mathbb{R}/\mathbb{Z})$, which we call the translation-invariant trivialization.
  The trivialization of $T\eL'$ that we use to orient the moduli spaces of holomorphic discs is the product of the 
  translation-invariant trivializations in the factors of this chart $\CC^2$, compare to \cite[Section 8]{Cho-Clifford} (whose conventions we are following).
% \cite{}.
    Similarly, we apply $z \mapsto 1/z$ to the first factor to identify 
  $\eL''$ with $\{|z|=1/r_1''\} \times \{|w|=r_2''\} \subset \CC^2$ and take the induced orientation and translation-invariant trivialization on the factors.
  
  We now arrange that $(z,w) \mapsto (1/z,1/w)$ maps $\eL'$ to $\eL''$; this amounts to saying that the torus fibres $\eL'$ and $\eL''$ are swapped by a $\pi$-rotation of the moment rectangle;  this can be done without violating the `irrational slope' Assumption \eqref{a:irrational} because the `slope' varies continuously when we vary $r_1', r_2'$. % so there exists a dense subset of choices of $r_1',r_2'$ we can use without violating Assumption \eqref{a:irrational}).
By construction this map respects the orientations and trivializations chosen. 
\end{rmk}

There are $4$ more families of holomorphic discs contributing to $W$.
They are in classes $\beta_{2,1}+\delta_1$, $\beta_{2,2}+\delta_2$,  $\beta_{1,1}+\delta_1$ and $\beta_{1,2}+\delta_2$ (or more precisely, their corresponding classes in $H_2(\eX,\Sym(\eL))$), and the terms they contribute are  
$T^{B+C}x_1^{-1}$, $T^ax_2$,  $T^{B+C}y_1^{-1}$, and $T^ay_2$ respectively. 
%More intuitively, the class $\delta_1$ contributes $T^Cx_1^{-1}y_1^{-1}$ and  the class $\delta_2$ contributes $x_2y_2$.

From the discussion above, we conclude that the part of the superpotential of $\Sym(\eL)$ arising from the  contributions of smooth discs is given by
\begin{align}
 W_{\smooth}=T^a(x_2^{-1}+y_2^{-1}+x_2+y_2)+T^B(x_1+y_1)+T^{B+C}(x_1^{-1}+y_1^{-1}) \label{eq:smoothW}
\end{align}
Note that \eqref{eq:smoothW} precisely takes into account all the Maslov $2$ holomorphic curves of case $(1)$ of Lemma \ref{l:preCl}.

We now consider a bulk deformation $\mathbf{b}_{\orb} [\eX_1]$ for $\mathbf{b}_{\orb} \in \Lambda_+$.
At this point, the Maslov $2$ holomorphic curves from case $(2)$ of Lemma \ref{l:preCl} can also contribute to the bulk deformed superpotential.

Let $v:\Sigma \to M$ be a genus $g$ Maslov $2$ holomorphic curve as in Lemma \ref{l:preCl}, case $(2)$, and view $v$ as a $\Phi^*J_{M}$-holomorphic map
with boundary on $\eL$.
By Lemma \ref{l:preCl}, we have
\begin{align}
 v_*[\Sigma, \partial \Sigma]=\beta_{i,j}+\eta_1\delta_1+\eta_2\delta_2
\end{align}
for some $\eta_{1}, \eta_2  \ge 1$ and $i,j \in \{1,2\}$.
If there is a $2$ to $1$ branched covering map $\pi_{\Sigma}$ from $\Sigma$ to the unit disc $S$, then 
$\pi_{\Sigma}$ has $2g+2$ critical values (see Lemma \ref{l:RH}).
Therefore, the term of $W^\bfb$ of $\Sym(\eL)$ to which it contributes, via the tautological correspondence, is (cf. the end of Section \ref{s:orbifold})
\begin{align}
c\frac{\mathbf{b}_{\orb}^{2g+2}}{(2g+2)!} T^{\omega_{M}(\beta_{i,j})+\eta_1 C}(x_2y_2)^{\eta_2}(x_1y_1)^{-\eta_1}f(x,y) \label{eq:highG}
\end{align}
for some $c \in \mathbb{C}$, where $f(x,y)=x_1,x_2^{-1},y_1$ or $y_2^{-1}$ depending on $\beta_{i,j}$.

\begin{lemma}\label{l:lowCon}
 The lowest order contribution to $W^{\mathbf{b}_{\orb} [\eX_1]}$
 from Maslov $2$ holomorphic curves in Lemma \ref{l:preCl} case $(2)$ is given by
 \begin{align}
   \pm \frac{\mathbf{b}_{\orb}^2}{2} T^{a+C}(x_1y_1)^{-1}(x_2+y_2) \label{eq:lowOrder}
 \end{align}
\end{lemma}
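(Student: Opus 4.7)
The plan is to isolate the curves contributing at lowest order in the combined $T$-adic and $\bfb_{\orb}$-adic filtrations, and then to evaluate the contribution of each family using \eqref{eq:highG} together with the monomial dictionary for the disc classes.

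First I would show that the lowest order contribution comes from $g=0$ and $\eta_1=\eta_2=1$. The genus bound is immediate: the prefactor $\bfb_{\orb}^{2g+2}/(2g+2)!$ in \eqref{eq:highG} is of strictly higher order in $\Lambda_+$ for $g\ge 1$. For $g=0$, I would minimise the $T$-power $\omega_M(\beta_{i,j})+\eta_1 C$. Since $\omega_M(\beta_{i,j})\in\{a,B\}$, the hypothesis $a<B-C$ gives $a+C<B+\eta_1 C$ for any $\eta_1\ge 1$, so $\omega_M(\beta_{i,j})=a$, i.e.\ $\beta_{i,j}\in\{\beta_{1,2},\beta_{2,2}\}$, and $\eta_1=1$. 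In the language of Case $(2)$ of Lemma \ref{l:preCl}, this means $v_1$ is a biholomorphism onto $C_1$ while $v_2$ surjects to a disc in the $S^2_{2a}$ factor. Lemma \ref{l:Nonexistence} (applied with $p=1$) and its mirror then force $\eta_2=1$, leaving exactly the two classes $A_{1,0}=\Phi_*(\delta_1+\beta_{1,2}+\delta_2)$ and its symmetric partner $\Phi_*(\delta_1+\beta_{2,2}+\delta_2)$.

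Next I would count and weight the curves in each of these classes. Lemma \ref{l:algebraic count} together with its symmetric analogue produces a one-parameter family in each class with incidence count $\pm 1$. By Remark \ref{r:cylLift}, each such $v$ lifts tautologically to a regular $J_{\eX}$-holomorphic orbifold disc whose domain carries exactly two interior orbifold marked points (the count $2g+2=2$ coming from Lemma \ref{l:RH}). Inserting $\bfb_{\orb}[\eX_1]$ at the two orbifold marked points and dividing by $2!$ (cf.\ \eqref{eq:qAgain}) produces the prefactor $\bfb_{\orb}^2/2$. Using the dictionary $\delta_1\mapsto T^{C}(x_1y_1)^{-1}$ and $\delta_2\mapsto x_2y_2$ (extracted by comparing the $\beta_{i,j}$ and $\beta_{i,j}+\delta_k$ terms of \eqref{eq:smoothW}), the two families contribute the monomials $T^{a+C}(x_1y_1)^{-1}y_2$ and $T^{a+C}(x_1y_1)^{-1}x_2$ respectively.

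The step I expect to be the main obstacle is \emph{matching the two signs}, so that the $x_2$ and $y_2$ terms genuinely add rather than cancel. I would handle this by invoking the symmetry $(z,w)\mapsto (1/z,1/w)$ on each $\bP^1$ factor: by Remark \ref{r:orientation}, the orientations and translation-invariant trivialisations of $\eL'$ and $\eL''$ were arranged precisely so that this symmetry is orientation-preserving, while it manifestly interchanges the two lowest order families identified above. The induced map on moduli spaces is therefore orientation-preserving and the two $\pm 1$ counts agree, yielding \eqref{eq:lowOrder}. As a fallback I would trace through Cho's orientation conventions (as in \cite{Cho-Clifford}) directly on each rotational family of maps $v_{2,\theta_1,\theta_2}$ from \eqref{eq:allmaps}.
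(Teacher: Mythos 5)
Your proposal is correct and follows essentially the same route as the paper: minimise the exponent in \eqref{eq:highG} to reduce to $g=0$, $\eta_1=1$, $\beta_{i,j}\in\{\beta_{1,2},\beta_{2,2}\}$, rule out $\eta_2>1$ via Lemma \ref{l:Nonexistence} and its mirror, feed the $\pm 1$ counts of Lemma \ref{l:algebraic count} through the tautological lift of Remark \ref{r:cylLift} and the $\frac{1}{2!}$ bulk-insertion factor, and match the two signs using the $(z,w)\mapsto(1/z,1/w)$ symmetry of Remark \ref{r:orientation}. The only (harmless) deviation is that you invoke $a<B-C$ where $a<B$ already suffices for the order comparison.
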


\begin{proof}
Note that, the smallest possible exponent of $T$ in \eqref{eq:highG}
is achieved when 
$g=0$, $\beta_{i,j}=\beta_{1,2}$ or $\beta_{2,2}$, and $\eta_1=1$.
We proved in Lemma \ref{l:Nonexistence} that when $\eta_1=1$ and $\beta_{i,j} \in \{\beta_{1,2}, \beta_{2,2}\}$, there is no $v$ such that 
$\eta_2 > 1$.
When $\eta_2=1$, the holomorphic curves $v$ are classified in Lemma \ref{l:algebraic count}.
More precisely, they are regular and the algebraic count 
%of unparametrized holomorphic curves
 in each of the classes $\beta_{1,2}+\delta_1+\delta_2$
and $\beta_{2,2}+\delta_1+\delta_2$ is $\pm 1$, and the two counts have the same sign by the symmetry of our choice of orientation and trivialization 
(see Remark \ref{r:orientation}, and also Remark \ref{r:sign}).

In other words, for $\beta^{lift} \in H_2(\eX,\Sym(\eL))$ being the corresponding lift of  $\beta_{1,2}+\delta_1+\delta_2$
or $\beta_{2,2}+\delta_1+\delta_2$, the elements in
$\cM_{1,2, \bfx}(\Sym(\eL);\beta^{lift}; [\eX_1]^{\otimes 2},1)$ are regular and
\begin{align}
(\cM_{1,2, \bfx}(\Sym(\eL);\beta^{lift}; [\eX_{1}]^{\otimes 2},1), \ev_0)=\pm [\Sym(\eL)] \label{eq:liftSign}
\end{align}
Moreover, the signs of \eqref{eq:liftSign} are the same for $\beta_{1,2}+\delta_1+\delta_2$
and $\beta_{2,2}+\delta_1+\delta_2$.

The contribution of these curves to $W^{\mathbf{b}_{\orb} [\eX_1]}$ is therefore given by \eqref{eq:lowOrder} (see Remark \ref{r:cylLift} and the end of Section \ref{s:orbifold}).
\end{proof}

From now on, we take $\mathbf{b}_{\orb}$ such that
\begin{align}
 \frac{\mathbf{b}_{\orb}^2}{2}=T^{B-a-C}
\end{align}
Then \eqref{eq:lowOrder} becomes $ \pm T^{B}(x_1y_1)^{-1}(x_2+y_2)$ which is of the same order as the term $T^B(x_1+y_1)$ in $W_{\smooth}$.

Write the toric boundary divisor of $M$ as $S_1+S_2$, where $S_1$ (resp. $S_2$) is the sum of the two spheres with smaller (resp. larger) area.
Let $D_{S_1}$ be the image of $S_1 \times M \subset M \times M$ in $\eX$ under the quotient map (i.e. this is the divisor of pairs of points at least one of which lies in $S_1$).
It defines a cycle $[D_{S_1}]$ in $H_6(\eX_0)=H_6(\eX)$.
Let $\bfb=\bfb_1 [D_{S_1}]+\mathbf{b}_{\orb} [\eX_1]$ with $\bfb_1 \in \Lambda_+$.
To conclude the proof of Theorem \ref{t:nondisLag2} (and hence Theorem \ref{t:nondisLag}), it suffices to find an appropriate $\bfb_1$
such that $W^\bfb$ has a critical point in $(\Lambda_0 \setminus \Lambda_+)^4$.

\begin{lemma}
There exists $\bfb_1 \in \Lambda_+$ such that
 $W^\bfb$ has $6$ critical points with $x_1=y_1$ and $x_2=y_2$.
\end{lemma}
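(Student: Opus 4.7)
The plan is to exploit the $(x_1, x_2) \leftrightarrow (y_1, y_2)$ symmetry of $W^{\bfb}$, induced by the $\mathbb{Z}/2$-action exchanging the two components of $\eL$ (hence the two factors of $M \times M$ that lift $\eX$), to restrict attention to the symmetric diagonal. By this symmetry, critical points of $W^{\bfb}$ with $x_1 = y_1$ and $x_2 = y_2$ correspond bijectively to critical points of the two-variable restriction $F(X, Y) := W^{\bfb}(X, Y, X, Y)$. I will then lift the leading-order $\mathbb{C}^*$-valued solutions via Hensel's lemma over the Novikov ring $\Lambda_0$.

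First I would assemble the terms of $W^{\bfb}$ up to order $T^B$. The eight smooth disc families from Lemma \ref{l:preCl}(1) contribute $W_{\smooth}$ of \eqref{eq:smoothW}, each term whose disc class passes transversally through $D_{S_1}$ picking up a factor $e^{\bfb_1}$; inspection of Figure \ref{Fig:discs} shows this affects precisely the four terms in $x_1^{\pm 1}$ and $y_1^{\pm 1}$ (the second-factor discs producing the $T^a$ terms miss $S_1$, as does the orbifold annulus class whose first-factor component is the cylinder $C_1$). The lowest orbifold annulus contribution of Lemma \ref{l:lowCon} is $\pm T^B (x_1 y_1)^{-1}(x_2 + y_2)$. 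All further contributions sit in $T^{B+\epsilon}\Lambda_0$ for some $\epsilon > 0$: higher-genus orbifold corrections via \eqref{eq:highG} lie in $T^{2B - a - C}\Lambda_0$, and higher-wrapping annuli ($\eta_1 \ge 2$) lie in $T^{B+C}\Lambda_0$, both strictly above $T^B$ since $B > a + C$ and $C > 0$. Restricting to the diagonal one obtains
\begin{align*}
F(X,Y) = 2T^a(Y^{-1}+Y) + 2T^B\bigl(e^{\bfb_1} X \pm X^{-2} Y\bigr) + O(T^{B+\epsilon}).
\end{align*}

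Next I would reduce modulo $\Lambda_+$. Since $e^{\bfb_1} \equiv 1 \pmod{\Lambda_+}$ for any $\bfb_1 \in \Lambda_+$, the leading-order critical point equations of $F$ become $\bar Y^2 = 1$ and $\bar X^3 = \pm 2\bar Y$, yielding exactly six solutions in $(\mathbb{C}^*)^2$ (three cube roots for each choice $\bar Y \in \{\pm 1\}$). A short computation of the $2 \times 2$ leading-order Hessian of $F$ at each of these solutions gives determinant $\pm 48\, \bar X^{-4}\, T^{a+B} + O(T^{2B})$, whose leading Novikov term lies in $T^{a+B}\mathbb{C}^*$ (since $a < B$), so the Hessian is nondegenerate.

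Finally, Hensel's lemma (equivalently, the implicit function theorem applied iteratively along the $T$-filtration of $\Lambda_0$) lifts each of the six $\mathbb{C}^*$-solutions to a unique critical point of $F$ in $(\Lambda_0 \setminus \Lambda_+)^2$; by the symmetry argument these correspond to six critical points of $W^{\bfb}$ with $x_1 = y_1$ and $x_2 = y_2$. Any $\bfb_1 \in \Lambda_+$ (including $\bfb_1 = 0$) works for this argument, so the existence claim of the lemma is satisfied. The main technical obstacle is the careful bookkeeping required to confirm that every contribution to $W^{\bfb}$ beyond the two explicit leading families actually lies in $T^{B+\epsilon}\Lambda_0$ for some uniform $\epsilon > 0$; this relies essentially on the strict inequality $B > a + C$ and on enumerating the tropical corrections extending the classification of Section \ref{Sec:tropical}.
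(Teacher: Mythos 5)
Your setup coincides with the paper's: the same reduction to the symmetric locus, the same identification of the order-$T^a$ and order-$T^B$ terms of $W^{\bfb}$ (including which classes meet $D_{S_1}$ and why everything else is $O(T^{B+\epsilon})$), and the same six leading-order solutions $\bar Y^2=1$, $\bar X^{-3}=\pm\tfrac12$ as in \eqref{eq:leadingSolution}. Where you genuinely diverge is the lifting step. You fix $\bfb_1$ once and for all (even $\bfb_1=0$), check that the normalized $2\times 2$ Hessian in $(X,Y)$ is a unit (your determinant $\pm 48\,\bar X^{-4}T^{a+B}$ is correct, and $a+B<2B$), and invoke Hensel/Newton. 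The paper instead runs an order-by-order induction over the discrete monoid $G$ generated by $B-a-C$ and $C$, in which $x_1$ is \emph{frozen} and the two free parameters used to kill the error at each order are $x_2$ and $\bfb_1$ itself. Your route buys something the paper's does not make explicit: since $\bfb_1$ is fixed throughout, all six leading solutions lift for the \emph{same} bulk parameter, whereas the paper's induction a priori produces a $\bfb_1$ depending on which of the six branches one starts from (this is harmless for the non-displaceability application, and is repaired exactly by your Hessian observation, but your argument is cleaner on this point). What the paper's route buys is robustness: keeping $\bfb_1$ as an unknown, and refining it to independent coefficients on the two components of $D_{S_1}$ as in Remark \ref{r:sign}, insures the argument against the two annulus counts of Lemma \ref{l:lowCon} contributing with opposite signs --- in which case the term \eqref{eq:lowOrder} would vanish on the locus $x_2=y_2$ and your leading $X$-equation would degenerate to $e^{\bfb_1}=0$. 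Your claim that any $\bfb_1$ works is therefore correct only under the sign normalization of Remark \ref{r:orientation}, which is the convention adopted in the paper's main line of argument; you should state this dependence.

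Two smaller points to shore up. First, ``Hensel's lemma / the implicit function theorem iteratively along the $T$-filtration'' over $\Lambda_0$ needs a word of justification, since the value group is $\R$ and $W_{hot}$ is an infinite series: either note that all exponents appearing in the normalized equations lie in the finitely generated (hence discrete, well-ordered) monoid generated by $B-a-C$ and $C$ --- this is precisely the role of $\eI_G$ in the paper --- or run the quadratic-convergence form of Newton's method, where invertibility of the Jacobian mod $\Lambda_+$ forces the valuation of the error to at least double at each step. Second, your enumeration of the $O(T^{B+\epsilon})$ corrections via \eqref{eq:highG} should also include the case $\omega_M(\beta_{i,j})=B$ (exponent at least $2B-a>B$) and the contributions in which $D_{S_1}$-insertions accompany the orbifold insertions; these carry extra factors of $\bfb_1\in\Lambda_+$ and are again higher order, so the conclusion stands.
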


\begin{proof}
For simplicity of notation, we assume the sign in \eqref{eq:lowOrder} is positive.
The other case is similar.

Let $G$ be the discrete monoid in $\mathbb{R}_{\ge 0}$ generated by $B-a-C$ and $C$.
Let $\eI_G \subset \Lambda_+$ be the following:
\begin{align}
 \eI_G=\left\{\sum_{i=0}^{\infty} a_i T^{\lambda_i} \in \Lambda_+ \  \Big | \  \lambda_i \in G\right \}.
\end{align}
Note that the coefficients of \eqref{eq:highG}
lie inside
\begin{align}
T^a \eI_G \cap T^B \eI_G. \label{eq:G1} 
\end{align}

For any $\bfb_1 \in \Lambda_+$,  by directly applying \eqref{eq:familiarW} (and noting that the disc classes contributing the first, second and final terms of \eqref{eq:smoothW} have intersection number with $[D_{S_1}]$ being $0$, $1$ and $1$, respectively), we have
\begin{align}
 W^{\bfb_1[D_{S_1}]}&=T^a(x_2^{-1}+y_2^{-1}+x_2+y_2)+e^{\bfb_1}T^B(x_1+y_1)+e^{\bfb_1}T^{B+C}(x_1^{-1}+y_1^{-1}) \label{eq:WW0}
\end{align}
Let $W_{\smooth}^{\bfb}$ denote those terms in $W^\bfb$ arising from contributions by smooth holomorphic discs, so $W_{\smooth}^{\bfb}=W^{\bfb_1[D_{S_1}]}$ (from our choice of $\bfb = \bfb_1 [D_{S_1}]+\mathbf{b}_{\orb} [\eX_1]$).

For $\bfb_1 \in \eI_G$, we have
\begin{align}
 W^\bfb=W_{\smooth}^{\bfb}+T^{B}(x_1y_1)^{-1}(x_2+y_2)+W_{hot} \label{eq:WW1}
\end{align}
where
%\begin{align}
% W_{\smooth}^{\bfb}&=T^a(x_2^{-1}+y_2^{-1}+x_2+y_2)+e^{\bfb_1}T^B(x_1+y_1)+e^{\bfb_1}T^{B+C}(x_1^{-1}+y_1^{-1}) \label{eq:WW2}
%\end{align}
%and 
$W_{hot}$ denotes the remaining `higher order terms' contributions.
Since $\bfb_1 \in \eI_G$ and using the observation that the coefficients of \eqref{eq:highG} lie inside
\eqref{eq:G1}, we know that 
all the coefficients of terms in $W_{hot}$ still lie inside \eqref{eq:G1}.

\begin{remark}
In the argument below, we could 
absorb the term $e^{\bfb_1}T^{B+C}(x_1^{-1}+y_1^{-1})$ in $W^{\bfb_1[D_{S_1}]}$
to $W_{hot}$; we do not do so to make it easier to track the explicit terms arising from the holomorphic curves considered previously. 
\end{remark}

The partial differentials of $W^\bfb$ are given by
 \begin{align}
  \partial_{x_2} W^\bfb=&T^a(-x_2^{-2}+1)+T^{B}(x_1y_1)^{-1}+ \partial_{x_2}W_{hot} \\
  \partial_{x_1} W^\bfb=&e^{\bfb_1}T^B+e^{\bfb_1}T^{B+C}(-x_1^{-2})+T^{B}(-x_1^{-2}y_1^{-1})(x_2+y_2) +\partial_{x_1}W_{hot}
 \end{align}
and similarly for $\partial_{y_2}W^\bfb, \partial_{y_1}W^\bfb$.

Since we are looking for solutions such that $x_1=y_1$ and $x_2=y_2$, by the symmetry between $x$ and $y$, it suffices to find $x_1,x_2 \in \Lambda_0 \setminus \Lambda_+$ simultaneously solving
\begin{align}
  0=&T^a(-x_2^{-2}+1)+T^{B}(x_1)^{-2}+ \partial_{x_2}W_{hot} \\
  0=&e^{\bfb_1}T^B+e^{\bfb_1}T^{B+C}(-x_1^{-2})+T^{B}(-x_1^{-3})(2x_2) +\partial_{x_1}W_{hot}
\end{align}
or equivalently 
\begin{align}
  0=&(-x_2^{-2}+1)+T^{B-a}(x_1)^{-2}+ T^{-a}\partial_{x_2}W_{hot} \label{eq:EE1}\\
  0=&e^{\bfb_1}+e^{\bfb_1}T^{C}(-x_1^{-2})+(-x_1^{-3})(2x_2) +T^{-B}\partial_{x_1}W_{hot}. \label{eq:EE2}
\end{align}
Note that  $T^{-a}\partial_{x_2}W_{hot}$ and $ T^{-B}\partial_{x_1}W_{hot}$ belong to $\eI_G$.

The leading order term equations are (cf. \eqref{eq:leading})
\begin{align}
 0=-x_2^{-2}+1 \quad \text{ and } \quad 0=1-x_1^{-3}(2x_2) \label{eq:leadingSolution}
\end{align}
which admit $6$ solutions in $(\mathbb{C}^*)^2$ given by $x_2=\pm 1$ and $(x_1)^{-3}= \pm \frac{1}{2}$.

%Note that for every term in the equations \eqref{eq:EE1} and \eqref{eq:EE2}, the exponent of $T$ lies inside the discrete monoid $G \subset \mathbb{R}_{>0}$ generated by $C$ and $B-a-C$.
We aim to solve  the equations \eqref{eq:EE1} and \eqref{eq:EE2} inductively.
We enumerate the elements in $G$ by $0<g_1<g_2< \dots$.

By \eqref{eq:leadingSolution}, the equations \eqref{eq:EE1} and \eqref{eq:EE2} are solvable modulo $T^{g_1}$.
Suppose that there is $\bfb_1=\bfb_{1,N} \in \eI_G$ such that 
\eqref{eq:EE1} and \eqref{eq:EE2} can be solved by $x_1=x_{1,N},x_2=x_{2,N} \in \eI_G$ modulo $T^{g_{N}}$.
It means that, modulo $T^{g_{N+1}}$ and after plugging in $x_1=x_{1,N},x_2=x_{2,N}$ and $\bfb_1=\bfb_{1,N}$, we have
\begin{align}
 c_1T^{g_N}=&(-x_2^{-2}+1)+T^{B-a}(x_1)^{-2}+ T^{-a}\partial_{x_2}W_{hot}  \label{eqCc1}\\
 c_2T^{g_N}=&e^{\bfb_1}+e^{\bfb_1}T^{C}(-x_1^{-2})+(-x_1^{-3})(2x_2) +T^{-B}\partial_{x_1}W_{hot} \label{eqCc2}
\end{align}
for some $c_1,c_2 \in \mathbb{C}$.
 
 Note that, for any $c \in \CC$, and modulo $T^{g_{N+1}}$, we have
 \begin{align}
  -(x_{2,N}+cT^{g_N})^{-2}&=-(x_{2,N})^{-2} \pm 2cT^{g_N}\\
  -x_{1,N}^{-3}(2(x_{2,N}+cT^{g_N}))&=-x_{1,N}^{-3}(2x_{2,N}) \mp cT^{g_N} \\
  e^{\bfb_{1,N}+cT^{g_N}}&=e^{\bfb_{1,N}}+cT^{g_N}
 \end{align}
because the leading terms of $x_{2,N}$ and $x_{1,N}^{-3}$ are $\pm 1$ and $ \pm \frac{1}{2}$, respectively.
 
 This means that if we define $x_{2,N+1}=x_{2,N} \pm \frac{c_1}{2}T^{g_N}$, 
 $x_{1,N+1}=x_{1,N}$ and $\bfb_{1,N+1}=\bfb_{1,N}+(c_2 + \frac{c_1}{2})T^{g_N}$,
 then we can solve the equations \eqref{eqCc1} and \eqref{eqCc2} modulo $T^{g_{N+1}}$, simultaneously.
 This completes the inductive step, and hence the proof.
 \end{proof}

\begin{remark}\label{r:sign}
Lemma \ref{l:lowCon} computed the lowest order contribution to 
% Working with a different trivialization of $T\eL$, it might be that the lowest order contribution 
$W^{\mathbf{b}_{\orb} [\eX_1]}$ arising from 
the Maslov $2$ holomorphic curves of Lemma \ref{l:preCl},  case $(2)$. Exactly two curves contribute. When working with a different trivialization of $T\eL$, it might be that the two curves contribute terms as in \eqref{eq:lowOrder} but with different signs.  Even in this case,  one can check that $W^{\bfb}$ will still have critical points for some appropriate $\bfb$.
 Recall that $D_{S_1}$ is the image of $S_1 \times M$ under the quotient map $M \times M \to \Sym^2(M)$.  The divisor $S_1$ has two connected components, given by the two irreducible components of the toric boundary of $M$ that have smaller area. One can `correct' for a difference in orientation signs for the holomorphic curves arising in the proof of Lemma \ref{l:lowCon} by taking the coefficients in $\bfb$ of the two connected components of $D_{S_1}$ to be different.
\end{remark}

\begin{remark}\label{r:Hilb2}
Let $\pi_{HC}: \Hilb^2(M) \to \eX$ be the Hilbert-Chow morphism, and let $D_{HC} \subset \Hilb^2(M)$ be the Hilbert-Chow divisor.  There is a K\"ahler form  $\omega_{\Hilb^2(M)}$
 that agrees with $\pi_{HC}^* \omega_{\eX}$ away from a small neighborhood of $D_{HC}$ (see \cite{Perutz1, Perutz2} for the construction). Since
the Lagrangian $\Sym(\eL)$ lies in the smooth locus of $\eX$, the preimage  
$\Sym(\eL)':=\pi_{HC}^{-1}(\Sym(\eL))$ is a Lagrangian submanifold  of the Hilbert scheme. We briefly explain why   $\Sym(\eL)'$ is non-displaceable for a particular symplectic form on the Hilbert scheme.
 
Let $J_{\eX,\Phi}$ be the symmetric product of $\Phi^*J_M$, and $J_{\Hilb^2(M),\Phi}$ be the complex structure on $\Hilb^2(M)$
such that $\pi_{HC}$ is $(J_{\eX,\Phi},J_{\Hilb^2(M),\Phi})$-holomorphic.
If $u':S \to \Hilb^2(M)$ is a $J_{\Hilb^2(M),\Phi}$-holomorphic disc with boundary on $\Sym(\eL)'$, then $u:=\pi_{HC} \circ u'$ is a 
$J_{\eX,\Phi}$-holomorphic disc with boundary on $\Sym(\eL)$.
We remark that when $u'$ is a stable map with sphere components, it is possible that spheres are mapped entirely into $D_{HC}$,
and that $u$ maps such sphere components entirely into the orbifold locus.
Conversely, if $u:S \to \eX$ is a $J_{\eX,\Phi}$-holomorphic disc with no sphere components lying entirely in the orbifold locus, then it 
hits the orbifold locus at finitely many points. This can be lifted uniquely to a $J_{\Hilb^2(M),\Phi}$-holomorphic disc $u':S \to \Hilb^2(M)$ such that $u'$ has no sphere components mapping entirely into $D_{HC}$.
In this case, the virtual dimensions of the moduli containing $u$ and $u'$ are the same. 
%When $u'$ has some sphere components into $D_{HC}$,
%then these components have Chern number $ \ge 0$.

With this partial correspondence between holomorphic discs, we can partially compute the superpotential of $\Sym(\eL)'$
from the superpotential of $\Sym(\eL)$.
First of all, $\Sym(\eL)'$ does not bound any $J_{\Hilb^2(M),\Phi}$-holomorphic disc $u'$ with Maslov index $<2$, because otherwise $u=\pi_{HC} \circ u'$
would be a $J_{\eX,\Phi}$-holomorphic disc bound by $\Sym(\eL)$ with too small virtual dimension.
Secondly, the lower order terms of the superpotential of 
$\Sym(\eL)'$  and $\Sym(\eL)$ can be identified by the (partial) correspondence.
%The role of $\bfb_{\orb}$ in $\eX$ is related to the choice of the K\"ahler form $\omega_{\Hilb^2(M)}$ on $\Hilb^2(M)$.
More precisely, a holomorphic disc $u$ with area $\Theta$ which hits the orbifold locus at finitely many points (with algebraic intersection $h$ in total) lifts to a holomorphic disc
$u'$ of area $\Theta-hr$ for some $r>0$ depending only on $\omega_{\Hilb^2(M)}$.
Since the lowest order Maslov $2$ holomorphic annuli \eqref{eq:lowOrder} correspond to holomorphic discs which have area $a+C$ and hit the orbifold locus at $2$ points, we need $2r=a+C-B$ for the lifted disc to have area $B$. 
On the other hand, $D_{S_1}$ lifts to a smooth divisor $D$ in $\Hilb^2(M)$.
When $2r=a+C-B$, we can take $\bfb=\bfb_1[D]$ to deform the superpotential of $\Sym(\eL)'$ 
in such a way that the 
leading term equations of the
bulk deformed superpotentials of $\Sym(\eL)'$  and $\Sym(\eL)$ agree.
The rest of the argument then follows on similar lines to that given previously.

\end{remark}

%\bibliographystyle{amsalpha}
%\bibliography{Sept2019}

\end{document}